\theoremstyle{definition} 
\newtheorem{thm}{Theorem}[section]
\newtheorem{lem}[thm]{Lemma}
\newtheorem{conj}[thm]{Conjecture}
\newtheorem{rmk}[thm]{Remark}
\newtheorem{setup}[thm]{Setup}
\newcommand{\bb}{\mathbb}
\newcommand{\bN}{\bb{N}}
\newcommand{\bC}{\bb{C}}
\newcommand{\bF}{\bb{F}}
\newcommand{\fp}{\bF_p}
\newcommand{\ol}{\overline}
\newcommand{\olfp}{\ol{\bF}_p}
\newcommand{\Prep}{\mathrm{Prep}}
\newcommand{\ep}{\epsilon}
\newcommand{\de}{\delta}
\newcommand{\al}{\alpha}
\newcommand{\be}{\beta}
\newcommand{\oal}{\ol{\al}}
\newcommand{\lamoa}{\lambda_{\oal}}
\newcommand{\lt}{\left |}
\newcommand{\rt}{\right |}
\newcommand{\mr}{\mathrm}
\begin{document}
	
\title[Simultaneously preperiodic points in characteristic $p$]{On simultaneously preperiodic points for one-parameter families of polynomials in characteristic $p$}
\author{Jungin Lee and GyeongHyeon Nam}
\date{}
\address{J. Lee -- Department of Mathematics, Ajou University, Suwon 16499, Republic of Korea \newline
\indent G. Nam -- Department of Mathematics and Systems Analysis, Aalto University, Otakaari 1, Espoo, 02150, Finland}
\email{jileemath@ajou.ac.kr, gyeonghyeon.nam@aalto.fi}
	
\begin{abstract}
For a field $L$ of characteristic $p$, a polynomial $f \in \overline{\mathbb{F}}_p[x]$ and $\alpha, \beta \in L$, let $\mathrm{Prep}(f;\alpha,\beta)$ be the set of all $\lambda \in \overline{L}$ such that both $\alpha$ and $\beta$ are preperiodic under the action of $f_{\lambda}(x) := f(x) + \lambda$. Ghioca and Hsia proved that for certain families of polynomials, this set is infinite if and only if $f(\alpha)=f(\beta)$ or $\alpha, \beta \in \overline{\mathbb{F}}_p$.
Building on their work, we determine when $\mathrm{Prep}(f;\alpha,\beta)$ is infinite for most of the remaining binomial cases that were left open. 
Specifically, let $f(x)=c_1 x^{d_1} + c_2 x^{d_2} \in \overline{\mathbb{F}}_p[x]$, where $c_i \in \overline{\mathbb{F}}_p^*$, $1 \le d_1 < d_2$ and $d_i=p^{\ell_i}s_i$ with $\ell_i \ge 0$ and $p \nmid s_i$. We prove that if $p^{\ell_2}(s_2-1) < p^{\ell_1}(s_1-1)$, then $\mathrm{Prep}(f;\alpha,\beta)$ is infinite if and only if $f(\alpha)=f(\beta)$ or $\alpha, \beta \in \overline{\mathbb{F}}_p$. 
The key idea of the proof is to use the parameters $\lambda_{\overline{\alpha}} := \overline{\alpha} - f(\overline{\alpha})$ associated to suitable elements $\overline{\alpha} \in \overline{L}$ satisfying $f(\overline{\alpha})=f(\alpha)$. As an application, we extend the work of Asgarli and Ghioca on the colliding orbits problem to binomials satisfying $s_2>1$ and $p^{\ell_2}(s_2-1) < p^{\ell_1}(s_1-1)$.
\end{abstract}
\maketitle

\section{Introduction} \label{Sec1}

\subsection{The unlikely intersection principle} \label{ss:11}

The principle of \emph{unlikely intersections} has played a central role in arithmetic geometry and has inspired major research directions in the past few decades. In particular, it has also generated a rich body of work in arithmetic dynamics. One prominent example is the \emph{dynamical Mordell--Lang conjecture}, which states the following: let $X$ be a quasi-projective variety defined over a field $K$ of characteristic $0$, $\Phi$ be an endomorphism on $X$, $V \subseteq X$ be a closed subvariety and $\al \in X(K)$. Then the set of nonnegative integers $n$ such that $\Phi^n(\al) \in V$ forms a finite union of arithmetic progressions. For further background, we refer to \cite{BGT16}.

Another important question in arithmetic dynamics concerns \emph{simultaneously preperiodic points} for families of polynomials. Baker and DeMarco \cite[Theorem 1.1]{BD11} proved that for an integer $d \ge 2$ and $\al, \be \in \bC$, there are infinitely many parameters $c \in \bC$ such that both $\al$ and $\be$ are preperiodic under iteration of the polynomial $z^d+c$ if and only if $\al^d = \be^d$. 
This result has been extended to arbitrary families of polynomials \cite{BD13, FG18, GHT13, GY18} and certain families of rational maps \cite{DM20, DWY15, GHT15}. 

Traditionally, unlikely intersection problems have been studied mainly in characteristic $0$. More recently, however, there has been growing interest in the positive characteristic setting, where the landscape is more subtle and new conjectures are proposed. For instance, in positive characteristic, see \cite{BGT15, CGSZ21, Ghi19, XY25} for the dynamical Mordell--Lang conjecture, \cite{Ghi24, GH24} for simultaneously preperiodic points for one-parameter families of polynomials and \cite{GS23a, GS23b} for the Zariski dense orbit conjecture.

In this paper, building on the earlier works of Ghioca \cite{Ghi24} and Ghioca--Hsia \cite{GH24}, we resolve certain cases of simultaneously preperiodic points in positive characteristic that lie beyond the scope of previous methods.
Furthermore, motivated by the recent work of Asgarli--Ghioca \cite{AG25}, we also make progress on the \emph{colliding orbits problem} as explained in Section \ref{ss:14}.

\subsection{Previous results} \label{ss:12}
	
The problem of simultaneously preperiodic points in positive characteristic was studied by Ghioca \cite{Ghi24} and Ghioca--Hsia \cite{GH24}.  
Let $L$ be a field of characteristic $p>0$ and $\al, \be \in L$. Let $\ol{L}$ be a fixed algebraic closure and $\olfp$ be the algebraic closure of $\fp$ inside $\ol{L}$. For a polynomial $f \in \olfp[x]$, consider the family of polynomials
$$
f_{\lambda}(x) := f(x) + \lambda \in \ol{L}[x]
$$
parametrized by $\lambda \in \ol{L}$ and let $\Prep(f; \al, \be)$ denote the set of all $\lambda \in \ol{L}$ such that both $\al$ and $\be$ are preperiodic under $f_{\lambda}$. For certain special families of polynomials, the following results are known.

\begin{thm} \label{thm1a}
(\cite[Theorem 1.1]{Ghi24}) Let $f(x)=x^d$ for $d \ge 2$. Then $\lt \Prep(f; \al, \be) \rt = \infty$ if and only if at least one of the following statements holds:
\begin{enumerate}
\item[(a)] $\al, \be \in \olfp$;
\item[(b)] $d = p^{\ell}$ for some positive integer $\ell$ and $\be - \al \in \olfp$;
\item[(c)] $f(\al) = f(\be)$.
\end{enumerate}
\end{thm}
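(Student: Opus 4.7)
The \emph{if} direction is elementary, and I would dispose of it case by case. For (a), if $\al, \be \in \olfp$ then for any $\lambda \in \olfp$ both orbits remain in a common finite subfield, hence both points are preperiodic; since $\olfp$ is infinite, so is $\Prep(f;\al,\be)$. For (c), the identity $\al^d = \be^d$ implies $f_\lambda(\al) = f_\lambda(\be)$ for every $\lambda$, so the two orbits coincide from the first iterate onward, giving $\Prep(f;\al) \subseteq \Prep(f;\al,\be)$; the former is infinite because for each $n \ge 2$ the equation $f_\lambda^n(\al) = \al$ is a nontrivial polynomial in $\lambda$ of degree $d^{n-1}$, producing new periodic parameters as $n$ grows. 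For (b), with $d = p^\ell$, the Frobenius additivity in characteristic $p$ gives by induction
\[
f_\lambda^n(\be) - f_\lambda^n(\al) = (\be-\al)^{d^n},
\]
so when $\be - \al \in \olfp$ this offset takes only finitely many values, and again $\Prep(f;\al) \subseteq \Prep(f;\al,\be)$.

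For the converse, assume $\Prep(f;\al,\be)$ is infinite and that case (a) fails, so that at least one of $\al,\be$ is transcendental over $\olfp$. My plan is to port the Baker--DeMarco equidistribution strategy from characteristic zero to positive characteristic. Choose a function field $K$ of finite type over $\olfp$ containing $\al$ and $\be$, and view $\lambda$ as varying on the parameter line. At each place $v$ of $K$ the family $\{f_\lambda\}$ furnishes local canonical-height / Green functions $g_{\al,v}$ and $g_{\be,v}$ on the Berkovich analytification of the parameter line, together with associated equilibrium measures $\mu_{\al,v}$ and $\mu_{\be,v}$. The hypothesis supplies an infinite sequence of $\lambda$ at which $\hat h_{f_\lambda}(\al) = \hat h_{f_\lambda}(\be) = 0$; applying a non-archimedean arithmetic equidistribution theorem in the function-field setting (in the spirit of Baker--Rumely and Favre--Rivera-Letelier, suitably adapted) should force $\mu_{\al,v} = \mu_{\be,v}$ at every place $v$ of $K$.

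The main obstacle will be translating this measure-theoretic coincidence into the stated algebraic trichotomy. In characteristic zero, Baker--DeMarco show that equality of the equilibrium measures forces $\al^d = \be^d$ (case (c)); in characteristic $p$, the failure of Northcott over $\olfp$ together with the emergence of Frobenius additivity when $d$ is a power of $p$ produces genuinely new exceptions. I would therefore bifurcate on the $p$-part of $d$. When $\gcd(d,p) = 1$ the classical argument should carry over to yield (c). When $d = p^\ell$, the $\olfp$-linear structure of $f_\lambda$ modulo $\lambda$ permits the additive exception (b), and the delicate point is verifying that this is the \emph{only} new exception beyond (c). When $d = p^\ell s$ with $s>1$ and $\ell \ge 1$, the non-additive $s$-th-power component should be rigid enough to rule out (b)-type exceptions and restore (c). Throughout, isotrivial parameters must be treated with care, as these are precisely the characteristic-$p$ Northcott failure that accounts for exception (a).
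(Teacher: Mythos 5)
First, a point about scope: this paper does not prove Theorem \ref{thm1a}. It is quoted verbatim from \cite[Theorem 1.1]{Ghi24} and used as input, so there is no internal proof to compare against. The relevant comparison is with Ghioca's original argument, whose shape is visible from the machinery this paper inherits: one first establishes that infinitude of $\Prep(f;\al,\be)$ forces the \emph{pointwise} equality $\widehat{h}_{v,\lambda}(\al)=\widehat{h}_{v,\lambda}(\be)$ for every place $v$ and every $\lambda$ (the ancestor of Theorem \ref{thm:sameheight}, i.e.\ \cite[Theorem 2.13]{GH24}), then specializes $\lambda$ to $\al-f(\al)$ and runs an explicit valuation analysis on $\ep_n:=f_\lambda^n(\be)-f_\lambda^n(\al)$ at places where $|\al|_v>1$.

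Your \emph{if} direction is essentially right. Cases (a) and (b) are fine; the identity $f_\lambda^n(\be)-f_\lambda^n(\al)=(\be-\al)^{d^n}$ for $d=p^\ell$ is correct and your finiteness argument works. In case (c) you still need to justify that the set of $\lambda$ making $\al$ preperiodic is infinite: noting that $f_\lambda^n(\al)-\al$ has degree $d^{n-1}$ in $\lambda$ is the right idea, but one must rule out that the solution set stabilizes; this is precisely \cite[Proposition 2.3]{GH24}, and should be cited or argued.

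The \emph{only if} direction has a genuine gap, and it sits exactly at the heart of the theorem. You propose to ``port the Baker--DeMarco equidistribution strategy,'' but in characteristic $p$ there is no off-the-shelf arithmetic equidistribution theorem of the required strength: Northcott fails over $\olfp$, and the set of points of height zero is not finite, so the classical small-points equidistribution input is unavailable in this generality. The positive-characteristic literature circumvents this by proving equality of local canonical heights directly (this is what Theorem \ref{thm:sameheight} does), not by deducing equality of equilibrium measures. More seriously, the second half of your plan --- passing from equality of Green functions or measures to the trichotomy (a)/(b)/(c) --- is left entirely to ``should carry over,'' ``should be rigid enough,'' and ``the delicate point.'' That passage is not a measure-rigidity argument at all; it is an explicit estimate on $|\ep_n|_v$ (and, when $d$ is a $p$-th power, on its Frobenius-linear structure) at the finitely many places $v$ with $|\al|_v>1$, combined with the product formula, and it is precisely this computation that produces the exceptional case (b) and rules out anything else. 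As written, your proposal identifies the right taxonomy of cases but does not supply the argument that separates them, so the converse direction is not established.
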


\begin{thm} \label{thm1b}
(\cite[Theorem 1.1]{GH24}) Let $f \in \olfp[x]$ be a polynomial of degree $d \ge 3$. Assume that
$$
f(x) = \sum_{i=1}^{r} c_i x^{d_i},
$$
where $c_i \in \olfp^*$, $1 \le d_1 < \cdots < d_r = d$ and $d_i = p^{\ell_i} s_i$ with $\ell_i \ge 0$ and $p \nmid s_i$. Assume that the inequality
\begin{equation} \label{eqn_1a}
p^{\ell_r} (s_r-1) > \max ( 1, p^{\ell_1} (s_1-1), \ldots, p^{\ell_{r-1}} (s_{r-1}-1) )
\end{equation}
holds. Then $\lt \Prep(f; \al, \be) \rt = \infty$ if and only if at least one of the following statements holds:
\begin{enumerate}
\item[(a)] $\al, \be \in \olfp$;
\item[(b)] $f(\al) = f(\be)$.
\end{enumerate}
\end{thm}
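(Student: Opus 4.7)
The sufficiency of (a) or (b) is straightforward. If $f(\al)=f(\be)$, then $f_\lambda(\al)=f_\lambda(\be)$ for every $\lambda$, so $\Prep(f;\al,\be)=\Prep(f;\al)$, and the latter is infinite because one can solve $f_\lambda^{n}(\al)=f_\lambda^{m}(\al)$ for many pairs $n>m$. If $\al,\be\in\olfp$, then for every $\lambda\in\olfp$ the polynomial $f_\lambda$ has coefficients in a finite field, hence all of its $\olfp$-orbits are finite, so $\olfp\subseteq\Prep(f;\al,\be)$. The content of the theorem is the converse.

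My plan for the converse is to adapt the Baker--DeMarco strategy to the function-field, positive-characteristic setting. First, by a specialization argument, reduce to the case where $L$ is finitely generated over $\olfp$. Treating $\lambda$ as a transcendental variable, consider the family $f_\lambda \in \olfp(\lambda)[x]$ and the associated canonical height $\hat h_{f_\lambda}$ over $K:=\olfp(\lambda)$ (or a finite extension containing $\al$ and $\be$). The hypothesis $\lt \Prep(f;\al,\be) \rt = \infty$, together with failure of (a), should yield $\hat h_{f_\lambda}(\al)=\hat h_{f_\lambda}(\be)=0$ as canonical heights over $\ol K$, with $\al$ and $\be$ preperiodic at the generic fibre in a non-isotrivial way. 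One then invokes the function-field arithmetic equidistribution theorem to conclude that the adelic bifurcation measures $\mu_{\al,v}$ and $\mu_{\be,v}$ on the Berkovich analytification of $\mathbb{A}^1_K$ agree at every place $v$ of $K$.

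The heart of the argument is then a local analysis at the place $v=\infty$, where $|\lambda|_\infty$ is large. In a neighborhood of infinity the Böttcher coordinate of $f_\lambda$ converges, and $\hat h_{f_\lambda,\infty}(\al)-\hat h_{f_\lambda,\infty}(\be)$ admits a formal expansion in $\lambda^{-1}$. In characteristic $p$, a monomial of degree $d_i = p^{\ell_i} s_i$ contributes to this expansion only through its separable content, with effective order $p^{\ell_i}(s_i-1)$. The dominance condition \eqref{eqn_1a} ensures that the term $c_r x^{d_r}$ produces a strictly larger-order coefficient than every other monomial; the vanishing of the leading coefficient forces $c_r\al^{d_r}=c_r\be^{d_r}$ up to a Frobenius twist, and iterating with subsequent coefficients (again ordered by \eqref{eqn_1a}) should eventually yield $f(\al)=f(\be)$, i.e.\ conclusion (b). The main obstacle is precisely this local expansion: in characteristic $p$ the purely inseparable powers have to be tracked carefully so that the Frobenius-reduced \emph{effective degree} $p^{\ell_i}(s_i-1)$ emerges correctly, and one must verify that \eqref{eqn_1a} really does isolate a unique dominant term at each stage. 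A secondary issue is isolating the isotrivial exception in case (a), where the measure identity holds for trivial reasons without imposing any algebraic constraint on $f(\al)$ and $f(\be)$.
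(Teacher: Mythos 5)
Your proposal follows the Baker--DeMarco blueprint: pass to canonical heights, invoke equidistribution to equate bifurcation measures at each place, then read off the constraint from a B\"ottcher-coordinate expansion near $\lambda=\infty$. The proof in \cite{GH24} (which this paper cites rather than reproves, but whose structure is visible in Theorem~\ref{thm:sameheight}, Section~\ref{Sec3}, and Remark~\ref{rmk3c}) takes a genuinely different route that sidesteps almost all of this machinery. Theorem~\ref{thm:sameheight} yields the identity $\widehat{h}_{v,\lambda}(\al)=\widehat{h}_{v,\lambda}(\be)$ for \emph{every} place $v$ and \emph{every} $\lambda\in\bC_v$ directly from $|\Prep(f;\al,\be)|=\infty$, with no Berkovich measures and no B\"ottcher coordinates. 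One then \emph{specializes} $\lambda$ to the algebraic value $\lambda_\al=\al-f(\al)$, so $\al$ is fixed by $f_{\lambda_\al}$ and $\widehat{h}_{v,\lambda_\al}(\be)=0$; tracking $\ep_n:=f_{\lambda_\al}^n(\be)-\al$, an ultrametric estimate driven by \eqref{eqn_1a} forces $|\ep_1|_v<1$ at every $v$ with $\max(|\al|_v,|\be|_v)>1$, while trivially $|\ep_1|_v\le 1$ elsewhere, and the product formula then gives $\ep_1=f(\be)-f(\al)=0$ unless $\al,\be\in\olfp$. Your ``effective degree $p^{\ell_i}(s_i-1)$'' heuristic is correct and is exactly the combinatorics behind that ultrametric estimate: the dominant contribution of $c_ix^{d_i}$ to $f(\ep+\al)-f(\al)$ is $c_is_i\ep^{p^{\ell_i}}\al^{p^{\ell_i}(s_i-1)}$, and \eqref{eqn_1a} says the top monomial wins. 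So you have the right invariant but propose to reach it through much heavier, and in this setting unavailable, analytic scaffolding.

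The concrete gaps in your route are in the two steps you yourself flag as ``obstacles,'' and they are not small. Equidistribution of small-height parameters over function fields in characteristic $p$ is not an off-the-shelf tool; the standard statements require non-isotriviality hypotheses whose verification here is entangled with exactly the exceptional case (a), and the families $f_\lambda$ with $d_r=p^{\ell_r}s_r$, $\ell_r>0$, are the ones where these hypotheses are hardest to check. Likewise the B\"ottcher coordinate of $f_\lambda$ at infinity is built from a $d_r$-th root; when $p\mid d_r$ its existence, uniqueness, and the claimed formal expansion in $\lambda^{-1}$ all need a separate argument, and it is precisely to avoid this that \cite{GH24} (and this paper) work with the algebraic parameter $\lambda_\al$ rather than with a formal neighbourhood of $\lambda=\infty$. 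Without filling these two holes your plan does not yet constitute a proof, whereas the route through Theorem~\ref{thm:sameheight} plus the $\lambda_\al$-specialization is both shorter and robust in characteristic $p$.
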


We note that the inequality \eqref{eqn_1a} always holds when $\ell_r=0$. Moreover, by the assumption $d_r=d \ge 3$, at least one of $p^{\ell_r}$ and $s_r$ is greater than $2$ so $p^{\ell_r}(s_r-1) > 1$ unless $s_r=1$. Thus, for $r>1$, the inequality \eqref{eqn_1a} holds if and only if
$$
p^{\ell_r}(s_r-1) > \max( p^{\ell_1} (s_1-1), \ldots, p^{\ell_{r-1}} (s_{r-1}-1) ).
$$

\subsection{Main results} \label{ss:13}
\label{sss:notation and bound}

As our main results focus on the case $r=2$, we work under the following setup.

\begin{setup}\label{setup_f}
Let
$$
f(x)=c_1x^{d_1}+c_2x^{d_2} \in \olfp[x],
$$
where $c_1, c_2 \in \olfp^*$, $1 \le d_1 < d_2$ and $d_i = p^{\ell_i} s_i$ with $\ell_i \ge 0$ and $p \nmid s_i$.
\end{setup}

By Theorem \ref{thm1b}, it suffices to consider the situation where
$$
p^{\ell_2}(s_2-1) \le p^{\ell_1}(s_1-1).
$$
It is unlikely that an analogue of Theorem \ref{thm1b} can be established in this case by just applying the existing methods. Indeed, Ghioca and Hsia \cite[Remark 1.4]{GH24} mentioned that extending Theorem \ref{thm1b} to the case where $s_r=1$ but not all $s_i=1$ for $1 \le i \le r-1$ is very challenging. 

The main contribution of this article is to establish an analogue of Theorem \ref{thm1b} under the condition
$$
p^{\ell_2}(s_2-1) < p^{\ell_1}(s_1-1).
$$
As a special case, this includes the situation where $s_2=1$ and $s_1>1$.

\begin{thm} \label{thm_main4}
Under Setup \ref{setup_f}, assume that 
$$
p^{\ell_2}(s_2-1) < p^{\ell_1}(s_1-1).
$$
Then $\lt \Prep(f; \al, \be) \rt = \infty$ if and only if $\al,\be\in \olfp$ or $f(\al)=f(\be)$.
\end{thm}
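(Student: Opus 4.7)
The ``if'' direction is immediate. If $f(\alpha)=f(\beta)$, then $f_\lambda(\alpha)=f_\lambda(\beta)$ for every $\lambda\in\ol{L}$, so $\alpha$ is preperiodic under $f_\lambda$ if and only if $\beta$ is; the set of such $\lambda$ is easily seen to be infinite. If $\alpha,\beta\in\olfp$, then every $\lambda\in\olfp$ yields orbits inside $\olfp$, which is a union of finite fields, and hence both $\alpha$ and $\beta$ are preperiodic under $f_\lambda$. The content is the forward direction: assuming $|\Prep(f;\alpha,\beta)|=\infty$, $f(\alpha)\neq f(\beta)$ and, without loss of generality, $\alpha\notin\olfp$, we derive a contradiction.

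Following the general framework of \cite{Ghi24,GH24}, we replace $L$ by a finitely generated extension of $\olfp$ containing $\alpha,\beta$, so $L$ is a function field over $\olfp$, and we work place by place on the set of nontrivial places $v$. The new ingredient is the finite family of auxiliary parameters $\lambda_{\ol\alpha}:=\ol\alpha-f(\ol\alpha)$, where $\ol\alpha$ ranges over the roots in $\ol L$ of $f(x)-f(\alpha)$, and analogously $\lambda_{\ol\beta}$ for $\beta$. For each such $\ol\alpha$, the polynomial $f_{\lambda_{\ol\alpha}}$ fixes $\ol\alpha$ and sends $\alpha$ to $\ol\alpha$, so $\lambda_{\ol\alpha}$ lies in the set of parameters making $\alpha$ preperiodic. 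These finitely many parameters will serve as the only admissible $v$-adic ``attractors'' of $\Prep(f;\alpha,\beta)$ at each place where $\alpha$ is nonconstant.

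The central step is the local analysis at such a place $v$. First, a direct Newton-polygon argument shows that for $|\lambda|_v$ sufficiently large one has $|f_\lambda^n(\alpha)|_v\to\infty$, so any $\lambda\in\Prep(f;\alpha,\beta)$ has $|\lambda|_v$ bounded. Next, by comparing $\lambda$ with the finite set $\{\lambda_{\ol\alpha_v}\}$, we show that unless $|\lambda-\lambda_{\ol\alpha_v}|_v$ is small for some choice $\ol\alpha_v$, the iterates $f_\lambda^n(\alpha)$ escape to infinity, contradicting preperiodicity; and the analogous statement holds for $\beta$. The quantitative input that makes this work is the hypothesis $p^{\ell_2}(s_2-1)<p^{\ell_1}(s_1-1)$: it forces the local multiplier and ramification of $f$ at a near-fixed point $\ol\alpha_v$ to be governed by the $c_1x^{d_1}$ term, so the weight that appears in the local dynamical estimates is the correct exponent $p^{\ell_1}(s_1-1)$, not the naive $p^{\ell_2}(s_2-1)$ one would get by looking at the leading term. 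This is precisely the substitute, in our regime, for the Böttcher-coordinate analysis used in \cite{GH24} under the opposite inequality \eqref{eqn_1a}.

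Having pinned down the $v$-adic shape of every $\lambda\in\Prep(f;\alpha,\beta)$ as ``close to some $\lambda_{\ol\alpha_v}$ and to some $\lambda_{\ol\beta_v}$,'' we globalize via the product formula on $L$ and combine the local estimates at all places, then perform a Galois/combinatorial case analysis on the finite sets of roots $\{\ol\alpha_v\}$ and $\{\ol\beta_v\}$: the compatibility conditions across places force either $f(\alpha)=f(\beta)$ or $\alpha,\beta\in\olfp$, contradicting our assumption. I expect the main obstacle to be the local analysis at a place $v$ where $|\ol\alpha_v|_v=1$ and $\ol\alpha_v\neq 0$, since the Newton polygon of $f(x)-f(\ol\alpha_v)+\mu$ (for $\mu=\lambda-\lambda_{\ol\alpha_v}$ small) does not cleanly separate the contributions of $c_1x^{d_1}$ and $c_2x^{d_2}$; extracting the correct ramification exponent $p^{\ell_1}(s_1-1)$ in this mixed regime is exactly the point at which the strict inequality $p^{\ell_2}(s_2-1)<p^{\ell_1}(s_1-1)$ enters, and this is the delicate step that was not available under the hypotheses of Theorem \ref{thm1b}.
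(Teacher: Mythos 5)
You have correctly spotted the key new ingredient of the paper, namely the auxiliary parameters $\lambda_{\oal}:=\oal-f(\oal)$ for $\oal\in S(f,\al):=\{\oal: f(\oal)=f(\al)\}$, and the observation that $f_{\lambda_{\oal}}$ makes $\al$ preperiodic. However, the mechanism you propose for exploiting them is genuinely different from the paper's and, as sketched, has gaps.

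The paper does \emph{not} analyze the $v$-adic location of a general $\lambda\in\Prep(f;\al,\be)$. Instead, it applies Theorem \ref{thm:sameheight} (the ``equal local heights'' theorem, \cite[Theorem 2.13]{GH24}) to conclude that $\widehat{h}_{v,\lambda}(\al)=\widehat{h}_{v,\lambda}(\be)$ for \emph{every} $\lambda\in\ol{L}$ and $v$, and then evaluates at the finitely many \emph{specific} parameters $\lambda=\lambda_{\oal}$. Since $\al$ is preperiodic under $f_{\lambda_{\oal}}$ this forces $\widehat{h}_{v,\lambda_{\oal}}(\be)=0$, which in turn bounds $|\ol{\ep_n}|_v\le C_v$ (Lemma \ref{lem3:valoal}), where $\ol{\ep_n}=f^n_{\lambda_{\oal}}(\be)-\oal$. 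The decisive step, absent from your proposal, is purely algebraic: the quantity $\ol{\ep_2}-\ep_2=f(\ep_1+\oal)-f(\ep_1+\al)$ (see \eqref{eq:thm42-a}), viewed as a polynomial in $\oal$, is shown to be $v$-adically small for every $\oal\in S(f,\al)\setminus\{\al\}$ and $v\in S$; after dividing out successive factors $(\oal-\al)$, $(\oal-\oal_1)$, etc., and applying the product formula, one obtains a polynomial $g_r$ of degree $s_1-1-r$ (when $\ep_1\ne 0$) that nevertheless vanishes at $p^{\ell_2-\ell_1}s_2-r$ distinct points. The hypothesis $p^{\ell_2}(s_2-1)<p^{\ell_1}(s_1-1)$ enters precisely to make $p^{\ell_2-\ell_1}s_2-r>s_1-1-r\ge 0$, a contradiction forcing $\ep_1=0$. (When $s_2=1$ the degree drops to zero and one needs a separate argument — Theorems \ref{thm4c} and \ref{thm4d} — which is why the paper splits the proof into cases.)

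Your proposed local step — ``unless $|\lambda-\lambda_{\oal_v}|_v$ is small for some $\oal_v$, the iterates escape to infinity'' — is not established and is not obviously true: preperiodicity at $v$ only forces $\lambda$ into the closed disk $|\lambda+f(\al)|_v\le C_v$, and all the $\lambda_{\oal_v}$ lie on the boundary of that disk; there is no reason for the preperiodic locus to concentrate $v$-adically near those finitely many points. Moreover, the final ``Galois/combinatorial case analysis on the finite sets of roots'' is stated at a level of generality that does not constitute an argument; there is no analogue in the paper, and it is unclear how it would yield $f(\al)=f(\be)$. In short, your proposal has the right object ($\oal$ and $\lambda_{\oal}$) but the wrong lever: the paper's proof is a valuation estimate feeding a root-counting argument on explicit divided-difference polynomials, not a parameter-space localization.
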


By Theorems \ref{thm1b} and \ref{thm_main4}, the only remaining case is when $p^{\ell_2}(s_2-1) = p^{\ell_1}(s_1-1)$.
While we are unable to prove an analogue of Theorem \ref{thm1b} in this case, we can still establish the following weaker result. Here, we exclude the case $s_1=s_2=1$ (equivalently, $f$ is additive) as it will be addressed in Section \ref{ss:add}.

\begin{thm} \label{thm_main5}
Under Setup \ref{setup_f}, assume that $s_1,s_2>1$ and
$$
p^{\ell_2}(s_2-1) = p^{\ell_1}(s_1-1).
$$
If $\lt \Prep(f; \al, \be) \rt = \infty$ and not both $\al$ and $\be$ are contained in $\olfp$, then either $f(\al)=f(\be)$ or
$$
(f(\be)-f(\al))^{p^{\ell_2}-p^{\ell_1}} = -\frac{c_1s_1}{c_2s_2} \in \olfp^*.
$$
\end{thm}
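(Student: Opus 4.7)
The plan is to reduce Theorem~\ref{thm_main5} to an application of the framework developed in the proof of Theorem~\ref{thm_main4}, via an explicit leading-coefficient computation. We may assume $f(\al) \neq f(\be)$ (otherwise the conclusion already holds), and set $A := f(\al)$, $B := f(\be)$ with $A - B \neq 0$. Writing
\[
K := c_1 s_1 (A - B)^{p^{\ell_1}} + c_2 s_2 (A - B)^{p^{\ell_2}},
\]
the goal becomes proving $K = 0$. Indeed, $K = 0$ rearranges to $(A - B)^{p^{\ell_2} - p^{\ell_1}} = -c_1 s_1 / (c_2 s_2)$, and this is exactly the exponent condition in the statement (the passage from $A - B$ to $f(\be) - f(\al)$ is harmless, since $p^{\ell_2} - p^{\ell_1}$ is even in odd characteristic and signs collapse in characteristic $2$).

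First I would verify that $K$ really is the genuine leading coefficient, in $\lambda$, of the second iterate difference $f_\lambda^2(\al) - f_\lambda^2(\be) = f(A + \lambda) - f(B + \lambda)$. Using the Frobenius identity $(A + \lambda)^{p^{\ell_i}} = A^{p^{\ell_i}} + \lambda^{p^{\ell_i}}$ and then the ordinary binomial theorem applied to the exponent $s_i$, a short computation gives
\[
c_i \bigl[(A + \lambda)^{d_i} - (B + \lambda)^{d_i}\bigr] = c_i s_i (A - B)^{p^{\ell_i}} \lambda^{p^{\ell_i}(s_i - 1)} + (\text{strictly lower order in } \lambda).
\]
Under the equality $p^{\ell_1}(s_1 - 1) = p^{\ell_2}(s_2 - 1) =: D$, the $i = 1, 2$ contributions collide at degree $D$, so
\[
f_\lambda^2(\al) - f_\lambda^2(\be) = K \lambda^D + (\text{strictly lower order in } \lambda).
\]

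Next I would argue by contradiction: assume $K \neq 0$, and combine $|\Prep(f;\al,\be)| = \infty$ with the hypothesis that not both $\al, \be \in \olfp$ to produce a contradiction. When $K \neq 0$, the $\lambda$-asymptotic behaviour of $f_\lambda^2(\al) - f_\lambda^2(\be)$ is governed by a single degree-$D$ leading term exactly as in the strict-inequality regime $p^{\ell_2}(s_2 - 1) < p^{\ell_1}(s_1 - 1)$ treated by Theorem~\ref{thm_main4}. I would then run the Theorem~\ref{thm_main4} argument---built around the auxiliary parameters $\lambda_{\oal} = \oal - f(\oal)$ for elements $\oal$ with $f(\oal) = f(\al)$ (under which $\al$ is automatically preperiodic via $\al \mapsto \oal \mapsto \oal$), together with adelic height / equidistribution comparisons on the parameter line---but with the input single-monomial leading term replaced by $K\lambda^D$. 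This should force either $\al, \be \in \olfp$ or $f(\al) = f(\be)$, contradicting both standing assumptions and thus forcing $K = 0$.

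The main obstacle will be making this reduction rigorous: i.e., confirming that the Theorem~\ref{thm_main4} argument genuinely adapts when the leading coefficient is the two-term expression $K$ rather than the single monomial $c_1 s_1 (A - B)^{p^{\ell_1}}$ appearing in the strict case. Concretely, one must track $K$ through the valuation-theoretic and height-theoretic estimates at each non-archimedean place of a suitable function field containing $\al, \be$, and check that only $K \neq 0$ (and no finer structure of the coefficients) is actually needed. It is precisely the possibility of $K$ vanishing---i.e., of the two competing terms balancing inside $\olfp^*$---that prevents Theorem~\ref{thm_main5} from attaining the full dichotomy of Theorem~\ref{thm_main4}, and which is responsible for the exponent condition as the unique additional possibility in the statement.
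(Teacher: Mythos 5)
Your identification of the quantity $K = c_1 s_1 (A-B)^{p^{\ell_1}} + c_2 s_2 (A-B)^{p^{\ell_2}}$, and of $K = 0$ as the condition to be proved, is correct and matches the coefficient that appears (up to sign) in the paper's proof of Theorem \ref{thm4_rho1}. Your leading-coefficient computation for $f_\lambda^2(\al) - f_\lambda^2(\be)$ is also sound. However, the core of your argument---deriving a contradiction from $K \ne 0$ by ``running the Theorem \ref{thm_main4} argument''---has a genuine gap that you flag as ``the main obstacle'' but do not resolve, and in fact it cannot be resolved. The mechanism of Theorem \ref{thm4_general} is a polynomial-interpolation argument in which the polynomials $g_1, \ldots, g_r$ are evaluated at the elements $\oal_i \in S(f,\al)$ and the bound $|g_r(\oal_i)|_v \le C_v^{1 - r(1-\rho)} < 1$ is combined with the product formula to force $g_r(\oal_i) = 0$. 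That inequality depends essentially on $1 - \rho > 0$ (concretely, on $\frac{1}{1-\rho} < p^{\ell_2 - \ell_1}$); when $\rho = 1$ one only gets $|g_r(\oal_i)|_v \le C_v > 1$, which is vacuous. Likewise, $|\oal - \al|_v \ge C_v^{1-\rho}$ collapses to $\ge 1$, so all the valuation gains that drive the argument vanish. The nonvanishing of $K$ gives $\deg(g_r) > 0$ as before, but this is worthless without the valuation step. Thus the reduction fails for structural reasons, not for bookkeeping reasons about the shape of the leading coefficient.

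The paper's actual proof is far more elementary and does not invoke the $\oal$-machinery at all. It first applies Lemma \ref{lem3:valep1} to conclude $|\ep_1|_v \le C_v^{1-\rho} = 1$ for every $v \in S$, so $\ep_1 = f(\be) - f(\al) \in \olfp$. Then, expanding $\ep_2 = f(\ep_1 + \al) - f(\al)$ as a polynomial in $\al$ with the now-constant coefficient $\ep_1$, the top-degree term is $(c_1 s_1 \ep_1^{p^{\ell_1}} + c_2 s_2 \ep_1^{p^{\ell_2}}) \al^{m}$ with $m = p^{\ell_1}(s_1-1) > 1$; if this coefficient (which is $\pm K$) were nonzero, then $|\ep_2|_v = C_v^m > C_v$, contradicting Lemma \ref{lem3b}. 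Hence $K = 0$. You should notice that the decisive step you are missing is to establish $\ep_1 \in \olfp$ up front: once $\ep_1$ is an $\olfp$-constant, a single leading-term estimate on $\ep_2$ replaces the entire interpolation scheme, and no analogue of the $\rho < 1$ machinery is needed.
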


\begin{rmk} \label{rmk_reduction}
In the above theorems, we may assume that $f(x)$ is monic and $L$ is a finite extension of the perfect closure of $\olfp(t)$ containing both $\al$ and $\be$. 
\begin{itemize}
    \item For an element $c \in \olfp$ satisfying $c^{d_2-1}=c_2$, let $\mu_c(x) = cx$ and 
    $$
    g(x) := \mu_c \circ f \circ \mu_c^{-1}(x) = c_1c^{1-d_1}x^{d_1} + x^{d_2} \in \olfp[x].
    $$
    Then $g$ is monic and $\Prep(f; \al, \be)$ is infinite if and only if $\Prep(g; c \al, c \be)$ is infinite. Moreover, If 
    $$
    (f(\be)-f(\al))^{p^{\ell_2}-p^{\ell_1}} = -\frac{c_1s_1}{c_2s_2},
    $$
    then
    $$
    (g(c \be)-g(c \al))^{p^{\ell_2}-p^{\ell_1}} = -\frac{c_1s_1}{c_2s_2} c^{d_2-d_1}
    = -\frac{(c_1c^{1-d_1})s_1}{1 \cdot s_2}.
    $$
    By these observations, we may assume that $f$ is monic.

    \item Now assume that $f$ is monic. If the set $\Prep(f; \al, \be)$ is nonempty, then $\mr{trdeg}_{\fp} \fp(\al, \be) \le 1$ by \cite[Proposition 4.2]{GH24}. Following the argument of \cite[Section 4.1]{GH24} (or \cite[Section 6.1]{Ghi24}), we may assume that $L$ is a finite extension of
    $$
    L_0:= \olfp(t,t^{1/p},t^{1/p^2},\ldots )
    $$
    (the perfect closure of $\olfp(t)$) containing both $\al$ and $\be$.
\end{itemize}
\end{rmk}

As mentioned earlier, our goal is to address the cases left open in \cite{Ghi24} and \cite{GH24}. In those works, the authors only used $\ep_1 = f(\be)-f(\al)$ to prove their results. Our paper introduces $\ep_2$ (and more generally, $\ep_n$), which provides some additional information about the relation between $\al$ and $\be$. However, the use of $\ep_n$ is not sufficient to prove our result.

The essential new ingredient of this paper is the introduction of an element $\oal \in \ol{L}$ satisfying $f(\oal)=f(\al)$. Since $\al$ is preperiodic under $f_{\lambda_{\al}}$ where $\lambda_{\al} := \al - f(\al)$, we have $\widehat{h}_{v, \lambda_{\al}}(\al)=0$ for every place $v$ of $L$. (See Section \ref{ss:21} for the definition of the local canonical height.) 
If the set $\Prep(f; \al, \be)$ is infinite, then Theorem \ref{thm:sameheight} ensures that $\widehat{h}_{v, \lambda}(\al)=\widehat{h}_{v, \lambda}(\be)$ for every $v$ and $\lambda \in \ol{L}$, which implies that $\widehat{h}_{v, \lambda_{\al}}(\be)=0$ for every $v$. This step was crucial in the proofs of Theorems \ref{thm1a} and \ref{thm1b}.

For an element $\oal \in \ol{L}$ satisfying $f(\oal)=f(\al)$, define $\lamoa := \oal - f(\oal)$. Since $f_{\lamoa}(\al) = f_{\lamoa}^2(\al) = \oal$, $\al$ is preperiodic under $f_{\lamoa}$. (Note that $\al$ is not periodic under $f_{\lamoa}$ when $\oal \ne \al$.) Hence, if the set $\Prep(f; \al, \be)$ is infinite, then $\widehat{h}_{v, \lamoa}(\be)=\widehat{h}_{v, \lamoa}(\al)=0$ for every $v$ by Theorem \ref{thm:sameheight}. 
In this setting, the equality $\widehat{h}_{v, \lamoa}(\be)=0$ implies that the absolute value of
$$
\ol{\ep_2}-\ep_2 = f(\ep_1+\oal)-f(\ep_1+\al)
$$
at the place $v$ cannot be too large. By choosing suitable $\oal$'s, we can control the absolute value of $\ol{\ep_2}-\ep_2$, which in turn yields our result.

After some detailed analysis on the valuations of $f(\ep_1+\oal)-f(\ep_1+\al)$, we derive Theorem \ref{thm_main4}. These observations involving $\ep_n$ and $\oal$ are essential for our proofs, as they provide a deeper understanding into the behavior of $\ep_1=f(\be)-f(\al)$.

\subsection{Colliding orbits problem} \label{ss:14}

Recently, Asgarli and Ghioca \cite{AG25} introduced the \emph{colliding orbits problem}, which extends a question studied in \cite{Ghi24}. The problem can be stated as follows. Let $L$ be a field of positive characteristic and let $\al_1,\al_2,\be \in L$. One seeks conditions under which the strict forward orbits of $\al_1$ and $\al_2$ under $f_\lambda$ both contain the target point $\be$ for infinitely many $\lambda \in \ol{L}$.
The case $f(x)=x^d$ ($d \ge 2$) was settled in \cite[Theorem 1.1]{AG25}, and their results suggest the following conjecture.

\begin{conj} \label{conj:colliding}
Let $L$ be a field of characteristic $p>0$, $\al_1,\al_2,\be \in L$ and $f \in \olfp[x]$ be a polynomial of degree $d \ge 2$ which is not additive. Assume that not all of 
$\al_1$, $\al_2$ and $\be$ are contained in $\olfp$.
Then the set
$$
C_f(\al_1, \al_2; \be) := \{\lambda\in \ol{L} : \text{there are } m,n \in \bN \text{ such that } f_\lambda^m(\al_1) = f_\lambda^n(\al_2) = \be \}
$$
is infinite if and only if $f(\al_1)=f(\al_2)$.
\end{conj}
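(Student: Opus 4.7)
The plan is to reduce Conjecture \ref{conj:colliding} to the simultaneously preperiodic point result (Theorem \ref{thm_main4}) whenever possible, and to handle the residual cases through a finer analysis of local canonical heights that echoes the $\lamoa$-technique developed in this paper.

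The ``if'' direction is the easier one. Assuming $f(\al_1) = f(\al_2)$, we have $f_{\lambda}(\al_1) = f_{\lambda}(\al_2)$ for every $\lambda \in \ol{L}$, so the forward orbits of $\al_1$ and $\al_2$ under $f_\lambda$ coincide from the first iterate onward; any $\lambda$ with $f_\lambda^n(\al_1) = \be$ therefore lies in $C_f(\al_1,\al_2;\be)$. Since the polynomial $f_\lambda^n(\al_1) - \be$ has degree $d^{n-1}$ in $\lambda$, it suffices to check that the union $\bigcup_n \{\lambda : f_\lambda^n(\al_1) = \be\}$ is infinite. This reduces to showing that no single $\lambda^*$ can absorb the roots for arbitrarily large $n$: if it did, $\be$ would be periodic under $f_{\lambda^*}$ with $\al_1$ eventually landing on it, but a multiplicity-bound argument shows this can absorb only a bounded fraction of the $d^{n-1}$ roots as $n$ grows.

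For the ``only if'' direction, assume $C_f(\al_1, \al_2; \be)$ is infinite and not all of $\al_1, \al_2, \be$ lie in $\olfp$; we want to show $f(\al_1) = f(\al_2)$. The key observation is that for each $\lambda \in C_f(\al_1, \al_2; \be)$, the forward orbit of $\al_i$ reaches $\be$ in finitely many steps, so $\al_i$ is preperiodic under $f_\lambda$ if and only if $\be$ is. I would split on whether $\be$ is preperiodic for infinitely many $\lambda \in C_f(\al_1, \al_2; \be)$. In the ``preperiodic'' case, $\Prep(f; \al_1, \al_2)$ is infinite, so Theorem \ref{thm_main4} (applied under the binomial hypothesis of this paper) gives $f(\al_1) = f(\al_2)$ or $\al_1, \al_2 \in \olfp$; the subcase $\al_1, \al_2 \in \olfp$ and $\be \notin \olfp$ requires a separate finiteness argument, since $f_\lambda^m(\al_i) \in \olfp[\lambda]$ makes each equation $f_\lambda^{m_i}(\al_i) = \be$ force $\lambda$ to be a root of a polynomial over $\olfp$ equated to $\be$, and a polynomial-identity comparison of the $\al_1$- and $\al_2$-equations over $\olfp(\be)$ should bound the common solutions whenever $f(\al_1) \ne f(\al_2)$. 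In the ``non-preperiodic'' case, for all but finitely many $\lambda \in C_f(\al_1, \al_2; \be)$ one has $\widehat{h}_{v, \lambda}(\al_i) = d^{-m_i(\lambda)} \widehat{h}_{v, \lambda}(\be)$ for every place $v$; passing to an infinite subfamily where $m_1(\lambda) - m_2(\lambda)$ is a fixed integer, I would adapt the $\lamoa$-technique by choosing common preimages of $\be$ lying in the orbits of $\al_1$ and $\al_2$ to engineer a simultaneous preperiodicity condition with respect to an auxiliary parameter, and then again invoke Theorem \ref{thm_main4}.

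The hard part will be the non-preperiodic case: when $\be$ is generically non-preperiodic along $C_f(\al_1, \al_2; \be)$, one does not directly obtain a simultaneously preperiodic parameter family to feed into Theorem \ref{thm_main4}, and one must instead work with local canonical heights scaled by powers of $d$ rather than with height equalities. Extracting genuine preperiodicity from the colliding-orbits condition—likely by combining the backward orbit structure of $\be$ with the $\oal$-trick of this paper—is the principal technical hurdle. A secondary obstacle is the subcase $\al_1, \al_2 \in \olfp$ and $\be \notin \olfp$ from the preperiodic case, where a careful polynomial-identity argument is needed to rule out infinitely many coincidences in parameter space.
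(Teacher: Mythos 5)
The statement you are addressing is labeled a \emph{conjecture} in this paper; the authors prove it only for binomials in Setup \ref{setup_f} with $s_2 > 1$ and $p^{\ell_2}(s_2-1) < p^{\ell_1}(s_1-1)$ (Theorem \ref{thm1_colliding}). Your proposal leans on Theorem \ref{thm_main4}, which is itself restricted to that binomial setting, so at best you are addressing the special case without saying so, and in neither direction does the argument actually close.

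On the ``if'' direction: your claim that a ``multiplicity-bound argument'' shows $\bigcup_n\{\lambda: f_\lambda^n(\al_1)=\be\}$ is infinite is not a proof in characteristic $p$. This is precisely the content of the paper's Theorem \ref{thm:57} (following \cite[Theorem 2.4]{AG25}), which embeds the iterates as points of a curve in $\mathbb{G}_m^2(L(u))$ intersected with a finitely generated subgroup and invokes the Mordell--Lang machinery over function fields; crucially, the argument there requires that $d$ not be a power of $p$ (hence the hypothesis $s_2>1$). Nothing in your sketch engages with this, and the assertion that a single $\lambda^*$ ``can absorb only a bounded fraction of the roots'' is exactly the sort of counting that fails in positive characteristic.

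On the ``only if'' direction: the split into ``$\be$ preperiodic'' versus ``non-preperiodic'' is a detour, and the hard case you flag is actually the easy one once you use the right tool. Theorem \ref{thm:sameheight} does not require preperiodicity, only that $\widehat{h}_{f_{\lambda_k}}(\al_1),\widehat{h}_{f_{\lambda_k}}(\al_2)\to 0$ along a sequence of distinct parameters. Lemma \ref{lem:58} gives $\widehat{h}_{f_\lambda}(\al_i)\le 2(h(\al_i)+h(\be))/d_2^{m}$ whenever $f_\lambda^m(\al_i)=\be$, and infiniteness of $C_f(\al_1,\al_2;\be)$ forces the exponents to be unbounded, so the heights tend to $0$ automatically --- no ``auxiliary parameter'' or adaptation of the $\lamoa$-trick to engineer preperiodicity is needed. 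You also skip the essential reduction step: Theorem \ref{thm:sameheight} only makes sense after one knows $\al_1,\al_2,\be$ all lie in a single transcendence-degree-$1$ extension of $\olfp$, which the paper establishes via the coefficient structure of $f_\lambda^n(x)$ in Lemma \ref{lem5:polynomialproperty} and the ensuing Lemmas \ref{lem:53}--\ref{lem:54} (either $f(\al_1)=f(\al_2)$ outright, or $\be\in\ol{\fp(\al_1)}=\ol{\fp(\al_2)}$). Without that step, the height machinery cannot be applied, and your proposal never addresses it.
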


We remark that Theorem \ref{thm1a} plays a crucial role during the proof of \cite[Theorem 1.1]{AG25}. This suggests that our result, Theorem \ref{thm_main4}, can be applied to extend the work of Asgarli and Ghioca to certain families of binomials. Indeed, we prove the following theorem using Theorem \ref{thm_main4}.

\begin{thm} \label{thm1_colliding}
Under Setup \ref{setup_f}, assume that $s_2>1$ and
$$
p^{\ell_2}(s_2-1) < p^{\ell_1}(s_1-1).
$$
Let $\al_1, \al_2, \be \in L$ and assume that not all of $\al_1$, $\al_2$ and $\be$ are contained in $\olfp$. Then $\lt C_f(\al_1, \al_2; \be) \rt  = \infty$ if and only if $f(\al_1)=f(\al_2)$.
\end{thm}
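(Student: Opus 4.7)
The plan is to adapt the two-case strategy of Asgarli and Ghioca \cite{AG25}, substituting our Theorem \ref{thm_main4} for their reliance on Theorem \ref{thm1a}; the hypothesis $s_2>1$ exactly enforces that $f$ is non-additive, as required by Conjecture \ref{conj:colliding}. The ``if'' direction is routine: when $f(\al_1)=f(\al_2)$, one has $f_\lambda(\al_1)=f_\lambda(\al_2)$ for every $\lambda$, so the two orbits coincide from step one, and it suffices to produce infinitely many $\lambda$ for which $\be$ lies in the strict forward orbit of $\al_1$ under $f_\lambda$. For each $m\ge 1$, the equation $f_\lambda^m(\al_1)-\be=0$ is a nonzero polynomial in $\lambda$ of degree $d_2^{\,m-1}\to\infty$, so a standard argument prevents all roots from being confined to a finite set.

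For the ``only if'' direction, suppose $|C_f(\al_1,\al_2;\be)|=\infty$ and that not all of $\al_1,\al_2,\be$ lie in $\olfp$. The pivotal observation is that every $\lambda\in C_f(\al_1,\al_2;\be)$ places $\al_1,\al_2,\be$ in a common forward orbit under $f_\lambda$, so one of them is preperiodic under $f_\lambda$ if and only if all three are. I would split on whether this preperiodic event occurs for infinitely many $\lambda\in C_f(\al_1,\al_2;\be)$. If it does, then $\Prep(f;\al_1,\al_2)$ is infinite, and Theorem \ref{thm_main4} yields either $f(\al_1)=f(\al_2)$ (the desired conclusion) or $\al_1,\al_2\in\olfp$. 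In the latter subcase the hypothesis forces $\be\notin\olfp$, and applying Theorem \ref{thm_main4} to the still-infinite sets $\Prep(f;\al_1,\be)$ and $\Prep(f;\al_2,\be)$ gives $f(\al_1)=f(\be)=f(\al_2)$.

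The remaining case, in which all but finitely many $\lambda\in C_f(\al_1,\al_2;\be)$ have none of $\al_1,\al_2,\be$ preperiodic under $f_\lambda$, is where the main obstacle lies. Letting $m=m(\lambda)$ and $n=n(\lambda)$ be the minimal positive integers with $f_\lambda^m(\al_1)=\be=f_\lambda^n(\al_2)$, the local canonical heights at every place $v$ of $L$ satisfy
$$
\widehat{h}_{v,\lambda}(\be)=d_2^{\,m}\,\widehat{h}_{v,\lambda}(\al_1)=d_2^{\,n}\,\widehat{h}_{v,\lambda}(\al_2),
$$
while the finiteness of each fiber $\{\lambda:f_\lambda^m(\al_1)=f_\lambda^n(\al_2)=\be\}$ forces $m(\lambda)+n(\lambda)\to\infty$ along the relevant parameters. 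The plan is to adapt the canonical-height ratio analysis of \cite{AG25} to the binomial setting, combining it with the valuation techniques underpinning the proof of Theorem \ref{thm_main4} (in particular, the exploitation of auxiliary parameters $\lamoa$), to derive a contradiction with $f(\al_1)\ne f(\al_2)$. The assumption $s_2>1$ enters here to exclude degeneracies that would arise if the leading term of $f$ were a $p$-th power.
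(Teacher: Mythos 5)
Your skeleton is correct in spirit — both directions reduce to Theorem \ref{thm_main4} and to producing infinitely many parameters hitting $\be$ — but there are two genuine gaps. First, in the ``if'' direction, the claim that ``a standard argument prevents all roots from being confined to a finite set'' because $\deg_\lambda(f_\lambda^m(\al_1)-\be)=d_2^{m-1}\to\infty$ is not valid: a sequence of polynomials of growing degree can perfectly well have all roots drawn from a fixed finite set $\{\lambda_1,\dots,\lambda_r\}$, with multiplicities absorbing the degree growth. The paper's Theorem \ref{thm:57} has to work for this, writing $f_u^m(\al)-\be=\prod_i(u-\lambda_i)^{e_{m,i}}$, placing the points $(f_u^m(\al)-\be,\,f_u^{m+1}(\al)-\be)$ on a geometrically irreducible curve $V\subset\mathbb{G}_m^2$ that is not a torsion coset, and invoking the Mordell--Lang/Laurent machinery on $\mathbb{G}_m^2$; the hypothesis $s_2>1$ (so $d_2$ is not a power of $p$) is exactly what makes that machinery applicable. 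That substantive step cannot be replaced by degree counting.

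Second, your ``only if'' case split on actual preperiodicity is not how the paper proceeds, and your Case~B is left as a plan rather than a proof. The paper dispenses with this dichotomy entirely: it first establishes the field-theoretic Lemmas \ref{lem:53} and \ref{lem:54} (analogues of \cite[Lemma 3.6, Prop.\ 3.7]{AG25}) to show that, assuming $f(\al_1)\neq f(\al_2)$, all of $\al_1,\al_2,\be$ lie in a finite extension $K$ of $L_0=\olfp(t,t^{1/p},\dots)$, a reduction you do not address and which is necessary before any local-height machinery applies. Then it uses Lemma \ref{lem:58} together with the fact that $m_k,n_k\to\infty$ along any infinite sequence in $C_f(\al_1,\al_2;\be)$ to get $\widehat{h}_{f_{\lambda_k}}(\al_1),\widehat{h}_{f_{\lambda_k}}(\al_2)\to 0$; crucially, Theorem \ref{thm:sameheight} only requires this limit to vanish, not exact preperiodicity, so both of your cases are handled uniformly and one concludes $\widehat{h}_{v,\lambda}(\al_1)=\widehat{h}_{v,\lambda}(\al_2)$ for all $v$ and all $\lambda$. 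The final step is then the \emph{proof} (not merely the statement) of Theorem \ref{thm_main4}, since that proof uses only the equality of local canonical heights. Your Case~A is a valid sub-argument for the special situation where preperiodicity is achieved infinitely often, but it is subsumed by this cleaner route, and the canonical-height-ratio idea you sketch for Case~B is not what is needed: once thm:sameheight applies, one never looks at a ratio $\widehat{h}_{v,\lambda}(\be)/\widehat{h}_{v,\lambda}(\al_i)$ at all.
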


\begin{rmk} \label{rmk:colliding-monic}
For $c \in \olfp$, let $\mu_c(x) = cx$ and 
$g(x) := \mu_c \circ f \circ \mu_c^{-1}(x) \in \olfp[x]$. Then $\lambda \in C_f(\al_1, \al_2; \be)$ if and only if $c \lambda \in C_g(c \al_1, c \al_2; c\be)$ so $C_f(\al_1, \al_2; \be)$ is infinite if and only if $C_g(c \al_1, c \al_2; c\be)$ is infinite.
Under Setup \ref{setup_f}, choosing $c \in \olfp$ satisfying $c^{d-1}=c_2$ guarantees that $g$ is monic.
\end{rmk}

\subsection{Outline of the paper} 

The paper is organized as follows. We recall basic facts on canonical heights in Section \ref{ss:21} following the exposition of \cite{GH24}. In Section \ref{ss:add}, we consider additive polynomials as discussed in \cite[Remark 1.4]{GH24}, and provide an explicit proof. 
In Section \ref{Sec3}, we investigate bounds on the valuations. We establish several results on the local absolute values of the numbers $\ep_n$ (defined in Section \ref{Sec3}) in Section \ref{ss:boundofvalue}. Section \ref{ss:overlinealpha} is devoted to properties of a special element $\oal \in S(f,\al)$, which serves as a key ingredient in the proofs of our main results. 

The proofs of our main results are presented in Sections \ref{Secmain} and \ref{Sec_col}. The proof of Theorem \ref{thm_main4} is divided into two cases: $s_2=1$ (Theorems \ref{thm4c} and \ref{thm4d}) and $s_2>1$ (Theorem \ref{thm4_general}). We prove Theorems \ref{thm4c} and \ref{thm4d} in Section \ref{ss:41}, and Theorem \ref{thm4_general} in Section \ref{ss:42}.
In Section \ref{ss:43}, we prove Theorem \ref{thm_main5} and discuss the limitation of our method. In Section \ref{Sec_col}, we prove Theorem \ref{thm1_colliding}. We prove the forward direction of Theorem \ref{thm1_colliding} in Section \ref{ss:51} and the converse direction of Theorem \ref{thm1_colliding} in Section \ref{ss:52}.

\section{Dynamics and heights} \label{Sec2}

Let us recall the analysis of \cite{Ghi24, GH24}. While we use a similar analytical approach, our method to prove our primary results differs from theirs.

Let $L_0$ be the perfect closure of the rational function field $\olfp(t)$, i.e. $L_0:= \olfp(t,t^{1/p},t^{1/p^2}, \ldots )$ and $L$ be a finite extension of $L_0$. Fix an algebraic closure $\ol{L}$ of $L$. 
Then the set $\Omega:=\Omega_L$ of all (pairwise inequivalent) places of $L$ has the following properties:
\begin{enumerate}
\item[(i)] For every nonzero $x\in L$, we have $|x|_v=1$ for all but finitely many $v\in \Omega$.
\item[(ii)] For every nonzero $x\in L$, we have 
\begin{equation}\label{eq:prodfor}
\prod_{v\in \Omega} |x|_v=1,
\end{equation}
where $|\cdot|_v$ denotes the absolute value associated with $v\in \Omega$ and the absolute values $|\cdot|_v$ are normalized so that the product formula \eqref{eq:prodfor} holds.
\item[(iii)] For an element $x \in L$, we have $|x|_v \le 1$ for every $v \in \Omega$ if and only if $x \in \olfp$.
\end{enumerate}

\subsection{Canonical heights}\label{ss:21}
Define the Weil height for each $x\in \ol{L}$ as
$$
h(x):= \frac{1}{[L(x):L]} \sum_{v\in \Omega} \sum_{y \in \mr{Gal}(L^{\mr{sep}}/L)\cdot x}\log^+|y|_v,
$$
where $\log^+(z)=\log \max (z,1)$ for each real number $z>0$. The \emph{global canonical height} of $x\in \ol{L}$ with respect to the polynomial $f_\lambda \in \ol{L}[x]$ ($\lambda \in \ol{L}$) is defined by
$$
\widehat{h}_{f_{\lambda}}(x) :=\underset{n\rightarrow \infty}{\lim} \frac{h(f_\lambda^n(x))}{d^n},
$$
where $d$ is the degree of $f$ and $f^n :={\underbrace{f\circ \cdots \circ f}_{n\text{-times}}}$.

The global canonical height is directly related to the preperiodicity. An element $\gamma \in L$ is preperiodic under $f_{\lambda}$ if and only if $\widehat{h}_{f_\lambda}(\gamma)=0$ (see \cite[Remark 2.5]{GH24} for a detailed explanation).

For each $v \in \Omega$, let $\bC_v$ be the completion of an algebraic closure of the completion of $L$ at $v$. This is the smallest extension of $L_v$ that is both complete and algebraically closed. The absolute value $|\cdot |_v$ extends uniquely to $\bC_v$, and we use the same notation $|\cdot |_v$ for this extension.
Define the \emph{local canonical height} $\widehat{h}_{v,\lambda}(x)$ for $x\in \bC_v$ with respect to the polynomial $f_\lambda$ as follows:
$$
\widehat{h}_{v,\lambda}(x):= \underset{n\rightarrow \infty}{\lim}\frac{\log^+|f_\lambda^n(x)|_v}{d^n}.
$$
	
\subsubsection{Relation between heights} Let $\al, \be \in L$ and $f(x) \in \olfp[x]$ be a monic polynomial of degree at least $2$ satisfying $f(0)=0$. Let $S$ be the (finite) set of places $v \in \Omega$ such that
$$
\max(|\al|_v, |\be|_v) > 1.
$$
By the property (iii) of the set $\Omega$, $S$ is nonempty unless both $\al$ and $\be$ are contained in $\olfp$. The following theorem provides a relation between local and global canonical heights. 

\begin{thm} \label{thm:sameheight} 
(\cite[Theorem 2.13]{GH24}) Let $\al, \be \in L$. Assume that there exists an infinite sequence $(\lambda_n)_{n \ge 1}$ of distinct elements in $\ol{L}$ with the property that 
\begin{equation*}\label{eq:limitzero}
\underset{n \rightarrow \infty}{\lim} \widehat{h}_{f_{\lambda_n}}(\al)=\underset{n \rightarrow \infty}{\lim} \widehat{h}_{f_{\lambda_n}}(\be)=0.
\end{equation*}
Then for every $v \in \Omega$ and $\lambda\in \bC_v$, we have $\widehat{h}_{v,\lambda}(\al)=\widehat{h}_{v,\lambda}(\be)$.
\end{thm}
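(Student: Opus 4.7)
The plan is to interpret $\lambda \mapsto \widehat{h}_{f_\lambda}(\gamma)$, for each $\gamma \in \{\al,\be\}$, as a global height on the parameter line $\mathbb{P}^1_\lambda$ coming from an adelic semipositive metrized line bundle $\overline{\mc L}_\gamma$ of positive degree, and then to apply arithmetic equidistribution of small points to force the $v$-adic curvature measures of the two bundles to coincide. Once the curvature measures coincide, a comparison of the logarithmic singularities at $\lambda=\infty$ will force the associated local Green functions (which \emph{are} the local canonical heights) to agree as well.

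To set this up, for each $v\in\Omega$ and each $\gamma\in\{\al,\be\}$ I would first show that the limit
$$
G_{\gamma,v}(\lambda)\;:=\;\lim_{n\to\infty}\frac{\log^+|f_\lambda^n(\gamma)|_v}{d^n}
$$
exists and defines a continuous function of $\lambda$ on the Berkovich analytification of $\mathbb{P}^1$ at $v$. The key input is the elementary fact that $\lambda\mapsto f_\lambda^n(\gamma)$ is a polynomial in $\lambda$ of degree $d^{n-1}$ with unit leading coefficient; a telescoping estimate then gives uniform convergence on $\{|\lambda|_v\geq R\}$, and a separate direct bound handles the bounded region. After rescaling by $d$, the function $d\cdot G_{\gamma,v}$ acquires the logarithmic singularity $\log^+|\lambda|_v+O(1)$ at $\lambda=\infty$, making it a $v$-adic Green function for a semipositive metric on $\mc O_{\mathbb{P}^1}(1)$. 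Summing the local contributions and using the product formula at each finite iterate, one then identifies $d\cdot\widehat{h}_{f_\lambda}(\gamma)=h_{\overline{\mc L}_\gamma}(\lambda)$, where $\overline{\mc L}_\gamma=(\mc O_{\mathbb{P}^1}(1),\{d\cdot G_{\gamma,v}\}_v)$ is adelic, semipositive, of degree one. The hypothesis thus says that $(\lambda_n)_{n\ge 1}$ is a common sequence of small points for both $\overline{\mc L}_\al$ and $\overline{\mc L}_\be$.

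The next step is to invoke arithmetic equidistribution in its one-dimensional Berkovich form (Chambert-Loir / Favre--Rivera-Letelier, adapted to the positive-characteristic function-field setting): for every $v\in\Omega$, the Galois orbits of the $\lambda_n$ equidistribute with respect to the curvature measure $\mu_{\gamma,v}:=dd^c G_{\gamma,v}$ of $\overline{\mc L}_\gamma$ at $v$. Applying this to $\gamma=\al$ and $\gamma=\be$ along the same sequence gives $\mu_{\al,v}=\mu_{\be,v}$ at every place. Equality of curvature measures means $G_{\al,v}-G_{\be,v}$ is harmonic on the Berkovich affine line at $v$; since both functions are of the shape $\tfrac{1}{d}\log|\lambda|_v+o(1)$ at $\lambda=\infty$, the difference must be identically zero. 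Hence $\widehat{h}_{v,\lambda}(\al)=G_{\al,v}(\lambda)=G_{\be,v}(\lambda)=\widehat{h}_{v,\lambda}(\be)$ for every $v\in\Omega$ and every $\lambda\in\bC_v$, as desired.

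The hard part will be the equidistribution step. One must verify that the adelic metrized bundles $\overline{\mc L}_\gamma$ are semipositive in Zhang's sense over the finite extension $L$ of the perfect closure $L_0=\olfp(t,t^{1/p},\ldots)$, and that the Galois orbits of the $\lambda_n$ have vanishing essential minimum, so that a Yuan--Gubler--K\"unnemann type equidistribution result applies uniformly over all places $v\in\Omega$. A secondary technical difficulty lies in Step 1: the continuity and semipositivity of $G_{\gamma,v}$ must be verified not merely on $\bC_v$ but on the full Berkovich space, and one must handle on the same footing the ``constant'' places, where $|t|_v$ is bounded, and the places at which $|t|_v>1$ (in which the dynamical estimates need to be done more carefully because the family coefficients move with $\lambda$).
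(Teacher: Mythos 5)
This statement is quoted from \cite[Theorem 2.13]{GH24}; the present paper does not prove it, so there is no internal proof to compare against. That said, your equidistribution strategy is precisely the route taken in \cite{GH24} and its antecedents (Baker--DeMarco and the earlier Ghioca--Hsia works): package $\widehat{h}_{f_\lambda}(\gamma)$ as the height of $\lambda$ with respect to an adelic semipositive metrized line bundle on $\mathbb{P}^1_\lambda$, apply equidistribution of small points at each place to identify the curvature measures attached to $\al$ and to $\be$, and then eliminate the remaining additive constant by comparing the logarithmic singularities at $\lambda=\infty$. Your sketch of the last step is correct and even slightly stronger than you state: for $|\lambda|_v$ larger than $\max(|f(\al)|_v,|f(\be)|_v)$ the two Green functions agree \emph{exactly} (both equal $\tfrac{1}{d}\log|\lambda|_v$), so the harmonic difference is identically $0$ on a neighborhood of infinity, not merely $o(1)$.

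Two remarks on your flagged difficulties. Your concern about places where $|t|_v>1$ versus those where $|t|_v\le 1$ is not a real issue here: the coefficients of $f$ lie in $\olfp$, hence have absolute value $1$ at every place of $L$, so there is no place-dependent interaction between the family's coefficients and $\lambda$; all places of $L$ are non-archimedean and behave uniformly. The genuinely delicate input, which you correctly identify, is the availability of an equidistribution theorem for small points over the product-formula field $L$ (a finite extension of the perfection of $\olfp(t)$, whose constant field is algebraically closed so Northcott fails). The needed result is a Baker--Rumely / Favre--Rivera-Letelier style Berkovich equidistribution theorem valid over such fields; genericity of $(\lambda_n)$ on $\mathbb{P}^1$ is automatic since the $\lambda_n$ are distinct, and vanishing essential minimum is exactly the hypothesis. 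Granting that input, your outline is sound and matches the proof in the cited reference.
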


The assumption of Theorem \ref{thm:sameheight} is satisfied when $\lt \Prep(f; \al, \be) \rt = \infty$. Hence, if $\lt \Prep(f; \al, \be) \rt = \infty$, then $\widehat{h}_{v,\lambda}(\al)=\widehat{h}_{v,\lambda}(\be)$ for every $v \in \Omega$ and $\lambda \in \ol{L} \subseteq \bC_v$. As noted in the introduction, later in the paper we will choose $\lambda = \oal - f(\oal)$ for some $\oal \in \ol{L}$ such that $f(\oal)=f(\al)$.

\subsection{The additive case}\label{ss:add}
Let $f(x) \in \olfp[x]$ be an additive polynomial of degree $d \ge 2$. According to \cite[Remark 1.4]{GH24}, we have $\lt \Prep(f; \al, \be) \rt = \infty$ if and only if $\be-\al \in \olfp$. Since the proof of this fact is not given in \cite{GH24}, we illustrate the proof here. 

Suppose first that $\lt \Prep(f; \al, \be) \rt = \infty$. 
For $\lambda = \al - f(\al)$, $\al$ is preperiodic under $f_{\lambda}$. Then by Theorem \ref{thm:sameheight}, $\widehat{h}_{v,\lambda}(\al)= \widehat{h}_{v,\lambda}(\be)=0$ for every $v \in \Omega$. 
Let $\ep_0 := \be - \al$ and $\ep_{n+1} := f(\ep_n)$ for every $n \ge 0$. By the additivity of $f$, it follows that $f_{\lambda}^n(\be) = \al + \ep_n$ for every $n \ge 0$. 
If $|\ep_0|_v > 1$ for some $v \in \Omega$, then $|\ep_n|_v = |\ep_0|_v^{d^n}$ for every $n \ge 0$ so $|f_{\lambda}^n(\be)|_v = |\al + \ep_n|_v = |\ep_0|_v^{d^n}$ for all sufficiently large $n$. This contradicts the fact that $\widehat{h}_{v,\lambda}(\be)=0$. Thus $|\ep_0|_v \le 1$ for all $v \in \Omega$, which implies that $\ep_0 = \be - \al \in \olfp$.

Now suppose that $\be-\al \in \olfp$. Since there are infinitely many $\lambda \in \ol{L}$ such that $\al$ is preperiodic under $f_{\lambda}$ (\cite[Proposition 2.3]{GH24}), it is enough to show that if $\al$ is preperiodic under $f_\lambda$, then $\be$ is also preperiodic under $f_\lambda$. By induction on $n$, one can show that for each $n$ there is a polynomial $g_n \in \olfp[x]$ satisfying
$f_\lambda^n(x)=f^n(x)+g_n(\lambda)$. This implies that
\begin{equation} \label{eqn_add}
f_\lambda^n(\be)-f_\lambda^n(\al)= f^n(\be)-f^n(\al)=f^n(\be-\al).
\end{equation}

Assume that $\al$ is preperiodic under $f_\lambda$. Then there is a nonnegative integer $k$ and a positive integer $l$ such that
$f_{\lambda}^k(\al)=f_{\lambda}^{k+rl}(\al)$ for every $r \ge 0$. 
Choose a positive integer $m$ such that $\be - \al$ and the coefficients of $f$ are in $\bF_{p^m}$. 
Since $f^{k+rl}(\be-\al) \in \bF_{p^m}$ for every $r \ge 0$, there are integers $0 \le r_1 < r_2 \le p^m$ such that $f^{k+r_1l}(\be-\al)=f^{k+r_2l}(\be-\al)$. By \eqref{eqn_add}, we have $f_\lambda^{k+r_1l}(\be)-f_\lambda^{k+r_1l}(\al)=f_\lambda^{k+r_2l}(\be)-f_\lambda^{k+r_2l}(\al)$. Since $f_\lambda^{k+r_1l}(\al)=f_\lambda^{k+r_2l}(\al)$, we conclude that $f_\lambda^{k+r_1l}(\be)=f_\lambda^{k+r_2l}(\be)$ and hence $\be$ is preperiodic under $f_{\lambda}$.

\section{Bounds on the valuations} \label{Sec3}

Let $f = \sum_{i=1}^{r} c_ix^{d_i} \in \olfp[x]$ be a polynomial of degree $d \ge 2$ such that $c_i \in \olfp^*$ and $1 \le d_1 < \cdots < d_r = d$. Assume that the set $\Prep(f; \al, \be)$ is infinite, so 
$$
\widehat{h}_{v,\lambda}(\al)=\widehat{h}_{v,\lambda}(\be)
$$
for every $v \in \Omega$ and $\lambda \in \ol{L}$ by Theorem \ref{thm:sameheight}. 

Let $\ep_0 := \be - \al$, $\ep_{n+1} := f(\ep_n + \al) - f(\al)$ and $\ep_n' := \ep_n - \ep_0$ for each $n \ge 0$. 
Similarly, let $\de_0 := \al-\be$, $\de_{n+1} := f(\de_n + \be) - f(\be)$ and $\de_n' := \de_n - \de_0$ for each $n \ge 0$.

\subsection{Local absolute values of \texorpdfstring{$\ep_n$}{epn}} \label{ss:boundofvalue} 

Let $\lambda_{\al} := \al-f(\al)$. Then $\ep_n = f_{\lambda_{\al}}^n(\be)-\al$ for each $n \ge 0$, which can be proved by induction on $n$.
For every $v \in S$, $f_{\lambda_{\al}}(\al)=\al$ so $\widehat{h}_{v,\lambda}(\al)=0$. By the assumption that $\Prep(f; \al, \be)$ is infinite, we also have $\widehat{h}_{v,\lambda_{\al}}(\be)=0$. 

\begin{lem} \label{lem3a}
For every $v \in S$, we have $|\al|_v=|\be|_v$.
\end{lem}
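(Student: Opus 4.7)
The plan is to argue by contradiction. Suppose $|\al|_v \ne |\be|_v$ for some $v \in S$. By symmetry (interchanging the roles of $\al$ and $\be$, and replacing $\lambda_{\al}$ by $\lambda_{\be} := \be - f(\be)$), I may assume $|\be|_v > |\al|_v$, and then the condition $v \in S$ forces $|\be|_v > 1$. The goal is to derive a contradiction by showing that the local canonical height $\widehat{h}_{v,\lambda_{\al}}(\be)$ is strictly positive, whereas Theorem \ref{thm:sameheight} combined with $\widehat{h}_{v,\lambda_{\al}}(\al) = 0$ (which holds because $\al$ is a fixed point of $f_{\lambda_{\al}}$) forces it to vanish.

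The key elementary tool I would use is that every $c \in \olfp^*$ satisfies $|c|_v = 1$ for all $v \in \Omega$: applying property (iii) to both $c$ and $c^{-1}$ gives $|c|_v \le 1$ and $|c^{-1}|_v \le 1$. Consequently, for any $x \in \bC_v$ with $|x|_v > 1$, the leading monomial $c_r x^{d_r}$ of $f$ strictly dominates all lower-order monomials, and hence $|f(x)|_v = |x|_v^d$.

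From here the argument reduces to a straightforward induction. Since $|\be|_v > \max(1, |\al|_v)$, we have
$$
|\lambda_{\al}|_v = |\al - f(\al)|_v \le \max(1, |\al|_v^d) < |\be|_v^d = |f(\be)|_v,
$$
so $|f_{\lambda_{\al}}(\be)|_v = |f(\be) + \lambda_{\al}|_v = |\be|_v^d$. Inductively, if $|f_{\lambda_{\al}}^n(\be)|_v = |\be|_v^{d^n}$, then $|f(f_{\lambda_{\al}}^n(\be))|_v = |\be|_v^{d^{n+1}}$ continues to strictly dominate $|\lambda_{\al}|_v$, so $|f_{\lambda_{\al}}^{n+1}(\be)|_v = |\be|_v^{d^{n+1}}$. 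Passing to the limit in the definition of the local canonical height then yields $\widehat{h}_{v,\lambda_{\al}}(\be) = \log |\be|_v > 0$, the desired contradiction.

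I do not anticipate any serious obstacle in executing this plan. The only point requiring minor care is tracking which term dominates in the non-archimedean triangle inequality at each stage, but this is clean precisely because every coefficient of $f$ has trivial absolute value at every place.
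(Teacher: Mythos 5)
Your proposal is correct and takes essentially the same approach as the paper: both argue by contradiction, assume without loss of generality that $|\be|_v > \max(1,|\al|_v)$, show that the leading term of $f$ dominates at each iterate so that $|f_{\lambda_\al}^n(\be)|_v = |\be|_v^{d^n}$, and conclude that $\widehat{h}_{v,\lambda_\al}(\be) = \log|\be|_v > 0$, contradicting Theorem~\ref{thm:sameheight}. The only cosmetic difference is that the paper phrases the induction in terms of $\ep_n = f_{\lambda_\al}^n(\be)-\al$ and the identity $\ep_{n+1} = f(\ep_n+\al)-f(\al)$, whereas you track $|f_{\lambda_\al}^n(\be)|_v$ directly via the bound $|\lambda_\al|_v \le \max(1,|\al|_v^d)$; the two bookkeepings are equivalent, and your observation that $|c|_v=1$ for all $c\in\olfp^*$ (from property (iii) applied to $c$ and $c^{-1}$) is exactly the fact the paper uses implicitly when it says the leading term dominates.
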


\begin{proof}
Assume that $|\al|_v < |\be|_v$, so $|\be|_v>1$ by the definition of $S$. Then $|\ep_0|_v = |\be|_v > \max(|\al|_v, 1)$. Moreover, if $|\ep_n|_v > \max(|\al|_v, 1)$, then
$$
|\ep_{n+1}|_v
= \lt \sum_{i=1}^{r} c_i((\ep_n+\al)^{d_i}-\al^{d_i}) \rt_v
= |\ep_n|_v^{d}.
$$
Therefore $|\ep_n|_v=|\be|_v^{d^n}$ for every $n \ge 0$. Now we have $|f_{\lambda_{\al}}^n(\be)|_v = |\ep_n+\al|_v = |\be|_v^{d^n}$ so $\widehat{h}_{v,\lambda_{\al}}(\be)>0$, which is a contradiction.
\end{proof}

For each $v \in S$, denote $C_v := |\al|_v=|\be|_v > 1$. 

\begin{lem} \label{lem3b}
For every $v \in S$ and $n \ge 0$, we have $|\ep_n|_v, |\de_n|_v \le C_v$.
\end{lem}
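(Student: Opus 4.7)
\medskip

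\noindent\textbf{Proof plan.} The plan is to argue by contradiction, essentially upgrading the reasoning already used in Lemma \ref{lem3a}. First I would record the base case $n=0$: since $|\alpha|_v=|\beta|_v=C_v$ by Lemma \ref{lem3a}, the ultrametric inequality immediately gives $|\epsilon_0|_v=|\beta-\alpha|_v\le C_v$, and symmetrically $|\delta_0|_v\le C_v$.

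For the inductive engine, the key observation I would isolate is the following: if at some step $|\epsilon_n|_v>C_v$, then $|\epsilon_n+\alpha|_v=|\epsilon_n|_v>1$ because $|\alpha|_v=C_v<|\epsilon_n|_v$. Expanding
\[
f(\epsilon_n+\alpha)=\sum_{i=1}^r c_i(\epsilon_n+\alpha)^{d_i},
\]
the leading term $c_r(\epsilon_n+\alpha)^{d}$ has $v$-absolute value $|\epsilon_n|_v^{d}$, strictly larger than every other $|c_i(\epsilon_n+\alpha)^{d_i}|_v=|\epsilon_n|_v^{d_i}$, so $|f(\epsilon_n+\alpha)|_v=|\epsilon_n|_v^{d}$. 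On the other hand $|f(\alpha)|_v=C_v^{d}<|\epsilon_n|_v^{d}$, so the ultrametric inequality becomes an equality, giving
\[
|\epsilon_{n+1}|_v=|f(\epsilon_n+\alpha)-f(\alpha)|_v=|\epsilon_n|_v^{d}>C_v.
\]

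Iterating, this yields $|\epsilon_{n+k}|_v=|\epsilon_n|_v^{d^k}$ for every $k\ge 0$. Since $|\epsilon_{n+k}|_v>C_v\ge|\alpha|_v$, this also forces $|f_{\lambda_\alpha}^{n+k}(\beta)|_v=|\epsilon_{n+k}+\alpha|_v=|\epsilon_n|_v^{d^k}$. Plugging this into the definition of the local canonical height,
\[
\widehat{h}_{v,\lambda_\alpha}(\beta)
=\lim_{k\to\infty}\frac{\log^+|f_{\lambda_\alpha}^{n+k}(\beta)|_v}{d^{n+k}}
=\frac{\log|\epsilon_n|_v}{d^{n}}>0,
\]
which contradicts $\widehat{h}_{v,\lambda_\alpha}(\beta)=0$ (established in the paragraph preceding Lemma \ref{lem3a} from the infinitude of $\Prep(f;\alpha,\beta)$ together with Theorem \ref{thm:sameheight} and the fact that $\alpha$ is fixed by $f_{\lambda_\alpha}$). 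Hence $|\epsilon_n|_v\le C_v$ for all $n\ge 0$.

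The bound on $|\delta_n|_v$ follows by the symmetric argument: since $\Prep(f;\alpha,\beta)=\Prep(f;\beta,\alpha)$ is infinite and $\beta$ is fixed by $f_{\lambda_\beta}$ for $\lambda_\beta:=\beta-f(\beta)$, Theorem \ref{thm:sameheight} gives $\widehat{h}_{v,\lambda_\beta}(\alpha)=0$, and repeating the above with $\alpha$ and $\beta$ interchanged yields $|\delta_n|_v\le C_v$. There is no real obstacle here; the only point requiring care is verifying that when $|\epsilon_n|_v>C_v$, the leading monomial of $f$ dominates both the other monomials and the constant shift $-f(\alpha)$, which is exactly what makes the inductive inequality a sharp equality rather than just an upper bound.
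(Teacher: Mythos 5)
Your proof is correct and follows essentially the same route as the paper's: assume $|\ep_n|_v > C_v$, show the leading monomial of $f$ forces $|\ep_{n+k}|_v = |\ep_n|_v^{d^k}$, and derive a contradiction with $\widehat{h}_{v,\lambda_\al}(\be)=0$, with the $\de_n$ bound obtained by symmetry. You simply spell out the dominance-of-the-leading-term step and the height limit in more detail than the paper does.
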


\begin{proof}
Since $\ep_n$ and $\de_n$ are symmetric, it is enough to show that $|\ep_n|_v \le C_v$. Assume that $|\ep_n|_v>C_v$ for some $n$. Then for each $m \ge n$, we have $|\ep_m|_v = |\ep_n|_v^{d^{m-n}} > C_v^{d^{m-n}}$ so $|f_{\lambda_{\al}}^m(\be)|_v = |\ep_m+\al|_v > C_v^{d^{m-n}}$. This contradicts the fact that $\widehat{h}_{v,\lambda_{\al}}(\be)=0$.
\end{proof}

The preceding lemmas are analogous to the results of \cite[Section 3.1–3.2]{GH24}, but are established here using a different approach. The main difference is that only $|\al|_v$, $|\be|_v$ and $|\ep_1|_v$ were considered in \cite{GH24}, while our proof treats $|\ep_n|_v$ for arbitrary $n$. 

\begin{rmk} \label{rmk3c}
(Revisiting the result of Ghioca--Hsia) Let $f \in \olfp[x]$ be a polynomial which satisfies the assumptions in Theorem \ref{thm1b}. 
Ghioca and Hsia \cite[Lemma 3.9]{GH24} proved that if the set $\Prep(f; \al, \be)$ is infinite, then $|\ep_1|_v < 1$ for every $v \in S$. Here we provide an alternative proof of $|\ep_1|_v < 1$ using $\ep_2$. 

By \cite[Lemma 3.7]{GH24}, we have $|\ep_1|_v < C_v$ for every $v \in S$. Suppose $1 \le |\ep_1|_v < C_v$ and let
$$
u_i(k) := \frac{(k C_v^{s_r-1})^{p^{\ell_r}}}{(k C_v^{s_i-1})^{p^{\ell_i}}}
$$
for each $1 \le i \le r-1$. Then $u_i(1) = C_v^{p^{\ell_r}(s_r-1)-p^{\ell_i}(s_i-1)} > 1$ and $u_i(c_r)=C_v^{d_r-d_i}>1$ so $u_i(|\ep_1|_v)>1$. This implies that
$$
|(\ep_1 \al^{s_r-1})^{p^{\ell_r}}|_v 
\ge |(\ep_1 \al^{s_i-1})^{p^{\ell_i}}|_v 
\ge |(\ep_1^{j} \al^{s_i-j})^{p^{\ell_i}}|_v
$$ 
for every $1 \le i \le r$ and $1 \le j \le s_i$, and the equality $|(\ep_1 \al^{s_r-1})^{p^{\ell_r}}|_v = |(\ep_1^{j} \al^{s_i-j})^{p^{\ell_i}}|_v$ holds if and only if $i=r$ and $j=1$. Now we have
$$
|\ep_2|_v
=\lt \sum_{i=1}^{r} c_i \left ( (\ep_1+\al)^{s_i}-\al^{s_i} \right )^{p^{\ell_i}} \rt_v
= \lt \sum_{i=1}^{r} \sum_{j=1}^{s_i} c_i \binom{s_i}{j} (\ep_1^{j} \al^{s_i-j})^{p^{\ell_i}} \rt_v
= |(\ep_1 \al^{s_r-1})^{p^{\ell_r}}|_v > C_v, 
$$
which is impossible by Lemma \ref{lem3b}. 
\end{rmk}

In the remaining part of the paper, we assume that $f \in \olfp[x]$ is given as in Setup \ref{setup_f}, i.e.
$$
f(x) = c_1x^{d_1}+c_2x^{d_2} \in \olfp[x]
$$ 
where $c_1, c_2 \in \olfp^*$, $1 \le d_1 < d_2$ and $d_i = p^{\ell_i} s_i$ with $\ell_i \ge 0$ and $p \nmid s_i$. To simplify the proof, we will give some additional conditions on $f$, $\al$ and $\be$.
\begin{enumerate}
    \item[(i)] If $p^{\ell_2} (s_2-1) > \max(1, p^{\ell_1}(s_1-1))$, then $\lt \Prep(f; \al, \be) \rt = \infty$ if and only if $\al, \be \in \olfp$ or $f(\al)=f(\be)$ by Theorem \ref{thm1b}. Therefore we may assume that $p^{\ell_2}(s_2-1) \le \max(1, p^{\ell_1}(s_1-1))$.

    \item[(ii)] The case that $f$ is additive is covered in Section \ref{ss:add}, so we may assume that $f$ is not additive, i.e. $(s_1, s_2) \ne (1,1)$. If $s_1=1$, then the condition $p^{\ell_2} (s_2-1) \le 1$ implies that $\ell_2=0$ and $s_2=2$ so $d_2=2$ and $p>2$. However, this case was already covered by Theorem \ref{thm1a} (see \cite[Remark 1.3]{GH24}). Hence we may assume that $s_1>1$ and thus $p^{\ell_2} (s_2-1) \le p^{\ell_1}(s_1-1)$.

    \item[(iii)] We may assume that $S \ne \emptyset$ (so $\al, \be \notin \olfp$), as the case $\al, \be \in \olfp$ is trivial. 
    
    \item[(iv)] By Remark \ref{rmk_reduction}, we may assume that $f$ is monic, i.e. $c_2=1$.
\end{enumerate}

Now we are ready to introduce the following results on the local absolute values of $\ep_n$ and $\ep'_n$, which will be frequently used in Section \ref{Secmain}. 
By the conditions $d_1<d_2$ and $d_2-p^{\ell_2} \le d_1-p^{\ell_1}$, we have $\ell_2 > \ell_1$ and 
$$
\rho := \frac{d_2-d_1}{p^{\ell_2}-p^{\ell_1}} \in (0, 1].
$$
When $s_2=1$, we let 
$$
\rho' := \frac{1-\rho}{p^{\ell_1}}-s_1+1.
$$

\begin{lem} \label{lem3d}
For every $n \ge 0$, we have $\lt \ep_n \rt_v = C_v$ or $\lt \ep_n \rt_v = C_v^{1-\rho}$ or 
$$\lt \ep_n \rt_v = \lt \ep_{n+1} \rt_v^{1 / p^{\ell_1}} C_v^{-s_1+1} \le C_v^{1/p^{\ell_1}-s_1+1}.$$ 
When $s_2=1$, $\lt \ep_n \rt_v = C_v^{1-\rho}$ or $$\lt \ep_n \rt_v = \lt \ep_{n+1} \rt_v^{1 / p^{\ell_1}} C_v^{-s_1+1} \le C_v^{\rho'}.$$
\end{lem}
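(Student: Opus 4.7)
The plan is to express $\ep_{n+1}$ in terms of $\ep_n$ via
\[
\ep_{n+1} = c_1 A_1^{p^{\ell_1}} + A_2^{p^{\ell_2}}, \qquad A_i := (\ep_n+\al)^{s_i} - \al^{s_i} = \sum_{j=1}^{s_i} \binom{s_i}{j} \ep_n^j \al^{s_i-j},
\]
and then to compare the two summands at $v$ via the non-Archimedean property. Case (a), $\lt \ep_n \rt_v = C_v$, requires no computation, so assume $\lt \ep_n \rt_v < C_v$. Within each $A_i$ the $j=1$ term, whose coefficient $s_i$ is nonzero modulo $p$, strictly dominates the others, giving $\lt A_i^{p^{\ell_i}} \rt_v = \lt \ep_n \rt_v^{p^{\ell_i}} C_v^{d_i - p^{\ell_i}}$. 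Consequently
\[
\frac{\lt A_2^{p^{\ell_2}} \rt_v}{\lt c_1 A_1^{p^{\ell_1}} \rt_v} = \left( \frac{\lt \ep_n \rt_v}{C_v^{1-\rho}} \right)^{p^{\ell_2}-p^{\ell_1}},
\]
so three subcases arise according as $\lt \ep_n \rt_v$ is greater than, equal to, or less than $C_v^{1-\rho}$.

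The main obstacle is to rule out the intermediate subcase $C_v^{1-\rho} < \lt \ep_n \rt_v < C_v$. Here non-Archimedean selects the $A_2^{p^{\ell_2}}$ term, giving $\lt \ep_{n+1} \rt_v = \lt \ep_n \rt_v^{p^{\ell_2}} C_v^{d_2 - p^{\ell_2}} > C_v^{d_2 - \rho p^{\ell_2}} = C_v^{d_1 - \rho p^{\ell_1}} = C_v^{p^{\ell_1}(s_1-\rho)}$, the middle identity using $\rho(p^{\ell_2} - p^{\ell_1}) = d_2 - d_1$. Since $s_1 \ge 2$ by the standing assumption (ii) and $\rho \le 1$, the exponent is at least $p^{\ell_1} \ge 1$, so $\lt \ep_{n+1} \rt_v > C_v$, contradicting Lemma \ref{lem3b}. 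In the remaining subcase $\lt \ep_n \rt_v < C_v^{1-\rho}$ the $c_1 A_1^{p^{\ell_1}}$ term dominates, so $\lt \ep_{n+1} \rt_v = \lt \ep_n \rt_v^{p^{\ell_1}} C_v^{d_1-p^{\ell_1}}$; rearranging and applying Lemma \ref{lem3b} gives $\lt \ep_n \rt_v = \lt \ep_{n+1} \rt_v^{1/p^{\ell_1}} C_v^{-s_1+1} \le C_v^{1/p^{\ell_1} - s_1 + 1}$. Together with cases (a) and (b) this establishes the first assertion.

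For the refinement when $s_2=1$, I further rule out $\lt \ep_n \rt_v = C_v$: since $A_2 = \ep_n$, we have $\lt A_2^{p^{\ell_2}} \rt_v = C_v^{d_2}$, while the trivial bound $\lt A_1 \rt_v \le C_v^{s_1}$ yields $\lt c_1 A_1^{p^{\ell_1}} \rt_v \le C_v^{d_1} < C_v^{d_2}$, and non-Archimedean forces $\lt \ep_{n+1} \rt_v = C_v^{d_2} > C_v$, again contradicting Lemma \ref{lem3b}. Applying the first assertion to $\ep_{n+1}$ then leaves only $\lt \ep_{n+1} \rt_v = C_v^{1-\rho}$ or $\lt \ep_{n+1} \rt_v \le C_v^{1/p^{\ell_1} - s_1 + 1}$; since $\rho \le 1 \le s_1 - 1/p^{\ell_1}$, both possibilities yield $\lt \ep_{n+1} \rt_v \le C_v^{1-\rho}$. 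Substituting into the Case (c) formula $\lt \ep_n \rt_v = \lt \ep_{n+1} \rt_v^{1/p^{\ell_1}} C_v^{-s_1+1}$ produces $\lt \ep_n \rt_v \le C_v^{(1-\rho)/p^{\ell_1} - s_1 + 1} = C_v^{\rho'}$, completing the proof.
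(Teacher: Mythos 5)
Your proof is correct, and it rests on the same underlying technique as the paper's: expand $\ep_{n+1} = f(\ep_n + \al) - f(\al)$ binomially, locate the ultrametrically dominant term, and use the constraint $\lt \ep_{n+1} \rt_v \le C_v$ from Lemma~\ref{lem3b}. The organizational difference is worth recording. The paper partitions the subcritical range $\lt \ep_n \rt_v < C_v$ at the threshold $1$: on $[1, C_v)$ it argues that the $i=1$, $j=1$ term has absolute value $> C_v$ and so must cancel against the $i=2$, $j=1$ term, forcing $\lt \ep_n \rt_v = C_v^{1-\rho}$, and it must then treat the boundary configuration $\lt \ep_n \rt_v = 1$, $\ell_1 = 0$, $s_1 = 2$ separately. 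You instead split at the more natural threshold $C_v^{1-\rho}$, using the monotone ratio $\bigl(\lt \ep_n \rt_v / C_v^{1-\rho}\bigr)^{p^{\ell_2}-p^{\ell_1}}$ to decide which block dominates: when $\lt \ep_n \rt_v > C_v^{1-\rho}$ the $d_2$-block wins and drives $\lt \ep_{n+1} \rt_v$ above $C_v^{p^{\ell_1}(s_1-\rho)} \ge C_v$, a contradiction; when $\lt \ep_n \rt_v < C_v^{1-\rho}$ the $d_1$-block wins and yields the formula directly. This eliminates the paper's edge case altogether. Your $s_2 = 1$ refinement matches the paper's (exclude $\lt \ep_n \rt_v = C_v$), and you usefully make explicit a step the paper leaves implicit: that $\lt \ep_{n+1} \rt_v \le C_v^{1-\rho}$ follows from applying the general disjunction to $\ep_{n+1}$ together with $\rho' \le 1 - \rho$. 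Both routes reach the same conclusion; yours is slightly cleaner.
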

	
\begin{proof}
We have
\begin{equation} \label{eq3a}
\begin{split}
\ep_{n+1} & = f(\ep_n+\al)-f(\al) \\ 
& = c_1((\ep_n+\al)^{d_1}-\al^{d_1}) + ((\ep_n+\al)^{d_2}-\al^{d_2}) \\
& = c_1 \sum_{k=1}^{s_1} \binom{s_1}{k} \ep_n^{p^{\ell_1} k} \al^{p^{\ell_1}(s_1-k)} + \sum_{k=1}^{s_2} \binom{s_2}{k} \ep_n^{p^{\ell_2} k} \al^{p^{\ell_2}(s_2-k)}.
\end{split}    
\end{equation}
Consider the valuation $\lt \ep_n \rt_v$ for $v \in S$. By Lemma \ref{lem3b}, $\lt \ep_n \rt_v \le C_v$ for each $n$.

\begin{enumerate}
\item[(C1)] $\lt \ep_n \rt_v = C_v$.

\item[(C2)] If $1 \le \lt \ep_n \rt_v < C_v$, then
$$
\lt s_i \ep_n^{p^{\ell_i}} \al^{p^{\ell_i}(s_i-1)} \rt_v > \max_{2 \le k \le s_i} \lt \binom{s_i}{k} \ep_n^{p^{\ell_i} k} \al^{p^{\ell_i}(s_i-k)} \rt_v
$$
for $i=1, 2$ and
$$
\lt s_1 \ep_n^{p^{\ell_1}} \al^{p^{\ell_1}(s_1-1)} \rt_v = \lt \ep_n \rt_v^{p^{\ell_1}} C_v^{p^{\ell_1}(s_1-1)} \ge C_v.
$$
If $\lt \ep_n \rt_v>1$ or $p^{\ell_1}(s_1-1) > 1$, then $\lt s_1 \ep_n^{p^{\ell_1}} \al^{p^{\ell_1}(s_1-1)} \rt_v > C_v \ge \lt \ep_{n+1} \rt_v$ so 
$$
\lt s_1 \ep_n^{p^{\ell_1}} \al^{p^{\ell_1}(s_1-1)} \rt_v = \lt s_2 \ep_n^{p^{\ell_2}} \al^{p^{\ell_2}(s_2-1)} \rt_v.
$$
This implies that $\lt \ep_n \rt_v^{p^{\ell_1}} C_v^{p^{\ell_1}(s_1-1)} = \lt \ep_n \rt_v^{p^{\ell_2}} C_v^{p^{\ell_2}(s_2-1)}$ so 
$$ \lt \ep_n \rt_v = C_v^{\frac{p^{\ell_1}(s_1-1)-p^{\ell_2}(s_2-1)}{p^{\ell_2}-p^{\ell_1}}} = C_v^{1-\rho}. $$ 
If $\lt \ep_n \rt_v=1$ and $p^{\ell_1}(s_1-1)=1$, then $\ell_1=0$ and $s_1=2$ so 
$$
\lt \ep_{n+1} \rt_v = \lt c_1(\ep_n^2 + 2 \ep_n \al)+\ep_n^{p^{\ell_2}} \rt_v = C_v.
$$
In this case, the equality $\lt \ep_n \rt_v = \lt \ep_{n+1} \rt_v^{1/p^{\ell_1}} C_v^{-s_1+1}$ holds.
			
\item[(C3)] If $\lt \ep_n \rt_v < 1$, then $s_1 \ep_n^{p^{\ell_1}} \al^{p^{\ell_1}(s_1-1)}$ has the largest absolute value in \eqref{eq3a} so 
$$
\lt \ep_{n+1} \rt_v = \lt s_1 \ep_n^{p^{\ell_1}} \al^{p^{\ell_1}(s_1-1)} \rt_v = (\lt \ep_n \rt_v C_v^{s_1-1})^{p^{\ell_1}}
$$ 
and thus $\lt \ep_n \rt_v = \lt \ep_{n+1} \rt_v^{1 / p^{\ell_1}} C_v^{-s_1+1}$.
\end{enumerate}

It remains to show that if $s_2=1$, then $|\ep_n|_v < C_v$. If $s_2=1$ and $|\ep_n|_v = C_v$, then 
$$
\lt \sum_{k=1}^{s_1} \binom{s_1}{k} \ep_n^{p^{\ell_1} k} \al^{p^{\ell_1}(s_1-k)} \rt_v < \lt \sum_{k=1}^{s_2} \binom{s_2}{k} \ep_n^{p^{\ell_2} k} \al^{p^{\ell_2}(s_2-k)} \rt_v = C_v^{p^{\ell_2}}
$$
so $\lt \ep_{n+1} \rt_v = C_v^{p^{\ell_2}} >C_v$, which is a contradiction. 
\end{proof}

\begin{lem} \label{lem3e}
For every $n \ge 1$, we have $\lt \ep'_n \rt_v = C_v$ 
or $\lt \ep'_n \rt_v = C_v^{1-\rho}$ 
or $$\lt \ep'_n \rt_v = \lt \ep'_{n+1}-\ep'_1 \rt_v^{1 / p^{\ell_1}} C_v^{-s_1+1} \le C_v^{1/p^{\ell_1}-s_1+1}.$$ 
When $s_2=1$, $\lt \ep'_n \rt_v = C_v^{1-\rho}$ 
or $$\lt \ep'_n \rt_v = \lt \ep'_{n+1}-\ep'_1 \rt_v^{1 / p^{\ell_1}} C_v^{-s_1+1} \le C_v^{\rho'}.$$
\end{lem}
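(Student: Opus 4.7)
The plan is to reduce Lemma \ref{lem3e} to the already-established Lemma \ref{lem3d} by observing that the sequence $\ep'_n$ satisfies a recurrence formally identical to the one driving the proof of Lemma \ref{lem3d}, with $\al$ replaced by $\be$ and with $\ep_{n+1}$ replaced on the left-hand side by $\ep'_{n+1}-\ep'_1$. This substitution is exactly what motivates the appearance of $\ep'_{n+1}-\ep'_1$ (rather than $\ep'_{n+1}$) in the statement.

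First I would record the key identity. Since $\ep'_n = \ep_n - \ep_0$ and $\ep_0 = \be - \al$, we have $\ep'_n + \be = \ep_n + \al$, and therefore
\[
\ep'_{n+1} - \ep'_1 \;=\; \ep_{n+1} - \ep_1 \;=\; \bigl( f(\ep_n+\al) - f(\al) \bigr) - \bigl( f(\ep_0+\al) - f(\al) \bigr) \;=\; f(\ep'_n + \be) - f(\be).
\]
Expanding this by the binomial theorem as in \eqref{eq3a}, but with $\al$ replaced by $\be$, puts the recurrence in precisely the same shape as the one analyzed in the proof of Lemma \ref{lem3d}.

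Next I would verify the two inputs needed to run that analysis verbatim. By Lemma \ref{lem3a}, $|\be|_v = C_v$, so replacing $\al$ by $\be$ preserves every absolute value estimate in the proof of Lemma \ref{lem3d}. By Lemma \ref{lem3b} and the ultrametric inequality, $|\ep'_n|_v \le \max(|\ep_n|_v, |\ep_0|_v) \le C_v$, which is the analogue of the bound $|\ep_n|_v \le C_v$ used throughout that proof. With these in hand, the three-case analysis transfers without change: (C1) $|\ep'_n|_v = C_v$; (C2) $1 \le |\ep'_n|_v < C_v$ forces $|\ep'_n|_v = C_v^{1-\rho}$ via the same dominance comparison between the $k=1$ terms (with the same edge case $\ell_1=0$, $s_1=2$ yielding the third relation); (C3) $|\ep'_n|_v < 1$ yields $|\ep'_{n+1}-\ep'_1|_v = (|\ep'_n|_v C_v^{s_1-1})^{p^{\ell_1}}$, hence the asserted equality $|\ep'_n|_v = |\ep'_{n+1}-\ep'_1|_v^{1/p^{\ell_1}} C_v^{-s_1+1}$ and the coarse upper bound $C_v^{1/p^{\ell_1}-s_1+1}$.

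For the $s_2 = 1$ refinement, I would repeat the tail argument of Lemma \ref{lem3d}: if $|\ep'_n|_v = C_v$ then the $s_2 = 1$ term dominates on the right, forcing $|\ep'_{n+1}-\ep'_1|_v = C_v^{p^{\ell_2}}$, which contradicts $|\ep'_{n+1}-\ep'_1|_v \le \max(|\ep'_{n+1}|_v, |\ep'_1|_v) \le C_v$; this rules out (C1), and it also eliminates the edge subcase inside (C2) by applying the same exclusion at the index $n+1$. To obtain the sharper bound $C_v^{\rho'}$ in (C3), I would apply the $s_2=1$ conclusion at indices $n+1$ and $1$ to get $|\ep'_{n+1}|_v, |\ep'_1|_v \le C_v^{1-\rho}$ (a direct computation of $\rho'$ in terms of $\ell_1, \ell_2, s_1$ shows $C_v^{\rho'} \le C_v^{1-\rho}$), whence $|\ep'_{n+1}-\ep'_1|_v \le C_v^{1-\rho}$ and therefore $|\ep'_n|_v \le C_v^{(1-\rho)/p^{\ell_1}-s_1+1} = C_v^{\rho'}$. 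Since the entire argument is a translation of the proof of Lemma \ref{lem3d} via the substitution $(\ep_n, \ep_{n+1}, \al) \mapsto (\ep'_n, \ep'_{n+1}-\ep'_1, \be)$, I do not anticipate a serious obstacle; the one conceptual point to get right is the identity $\ep'_{n+1}-\ep'_1 = f(\ep'_n+\be) - f(\be)$, which is what forces $\ep'_{n+1}-\ep'_1$ (rather than $\ep'_{n+1}$) to appear on the left-hand side in the statement.
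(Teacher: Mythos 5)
Your proposal is correct and follows the same approach as the paper. The paper's proof of Lemma \ref{lem3e} is a single sentence: ``This can be done by applying the proof of Lemma \ref{lem3d} to the equation $\ep'_{n+1} - \ep'_1 = f(\ep'_n + \be) - f(\be)$.'' You have identified exactly this identity (and verified it correctly from $\ep'_n + \be = \ep_n + \al$), supplied the two prerequisite bounds $|\be|_v = C_v$ and $|\ep'_n|_v \le \max(|\ep_n|_v,|\ep_0|_v) \le C_v$, and then run through the three-case structure of the Lemma \ref{lem3d} proof. One small imprecision worth noting: to exclude the (C2) edge subcase and to obtain the sharper $C_v^{\rho'}$ bound in (C3) when $s_2=1$, the clean non-circular argument is to first establish, for every $m$, the dichotomy $|\ep'_m|_v = C_v^{1-\rho}$ or $|\ep'_m|_v < 1$ (which follows once (C1) is ruled out and (C2 edge) is observed to force $|\ep'_{m+1} - \ep'_1|_v = C_v$, hence $\max(|\ep'_{m+1}|_v,|\ep'_1|_v) = C_v$, a contradiction), and then note $1 < C_v^{1-\rho}$ so in either branch $|\ep'_m|_v \le C_v^{1-\rho}$; feeding this into $|\ep'_n|_v = |\ep'_{n+1}-\ep'_1|_v^{1/p^{\ell_1}} C_v^{-s_1+1}$ gives the $C_v^{\rho'}$ bound. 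Your wording (``apply the $s_2=1$ conclusion at indices $n+1$ and $1$'') reads as if it invokes the full lemma conclusion at those indices, which would be circular as phrased; but the underlying reasoning you rely on is the dichotomy above, which is sound and is implicit in the paper's one-line reference.
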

	
\begin{proof}
This can be done by applying the proof of Lemma \ref{lem3d} to the equation $\ep'_{n+1} - \ep'_1 = f(\ep'_n + \be) - f(\be)$.
\end{proof}

Replacing $\al$ with $\be$, we obtain the same results for $\lt \de_n \rt_v$ and $\lt \de'_n \rt_v$ for every $n \ge 1$. 

\begin{lem} \label{lem3:valep1}
We have $\lt \ep_1 \rt_v = C_v^{1-\rho}$ or $\lt \ep_1 \rt_v \le C_v^{1/p^{\ell_1}-s_1+1}$. 
\end{lem}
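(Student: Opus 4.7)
The plan is to invoke Lemma \ref{lem3d} applied at $n=1$, which classifies $|\ep_1|_v$ into the three regimes $\{C_v,\ C_v^{1-\rho},\ \le C_v^{1/p^{\ell_1}-s_1+1}\}$, thereby reducing the claim to excluding $|\ep_1|_v = C_v$. I would argue by contradiction: assume $|\ep_1|_v = C_v$ and set $\eta := \ep_1/\al$, so that $|\eta|_v = 1$.

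The first observation is that $|\ep_1+\al|_v = C_v$: otherwise the $d_2$-part $|(\ep_1+\al)^{d_2}-\al^{d_2}|_v = C_v^{d_2}$ dominates in the expansion of $\ep_2$, forcing $|\ep_2|_v = C_v^{d_2}>C_v$ and contradicting Lemma \ref{lem3b}. Setting $\xi := 1+\eta$, so $|\xi|_v = 1$, the expansion
\begin{equation*}
\ep_2 = c_1\al^{d_1}(\xi^{d_1}-1) + \al^{d_2}(\xi^{d_2}-1),
\end{equation*}
combined with $|\ep_2|_v\le C_v$, forces strong algebraic constraints on the residue $\bar\xi$. At minimum $\bar\xi^{d_2}=1$, and, in the absence of exact cancellation between the two summands, also $\bar\xi^{d_1}=1$. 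Since $p\nmid s_i$ and Frobenius is injective on nonzero residues, $\bar\xi^{d_i}=1$ is equivalent to $\bar\xi \in \mu_{s_i}$, so the combined condition gives $\bar\xi \in \mu_{\gcd(s_1,s_2)}$; moreover $\bar\xi\ne 1$ because $|\eta|_v=1$ forces $\bar\eta\ne 0$. When $\gcd(s_1,s_2) = 1$ this already yields the contradiction.

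When $\gcd(s_1,s_2)\ge 2$ (or when only $\bar\xi^{d_2}=1$ via the cancellation branch), one must bootstrap by extracting an explicit leading expression for $\bar\eta = \bar\xi-1$ from
\begin{equation*}
\ep_1 = \ep_0^{p^{\ell_1}}\bigl(c_1T_1^{p^{\ell_1}}+\ep_0^{\,p^{\ell_2}-p^{\ell_1}}T_2^{p^{\ell_2}}\bigr),\qquad T_i = \sum_{j=0}^{s_i-1}\be^j\al^{s_i-1-j},
\end{equation*}
case-by-case according to the classification of $|\ep_0|_v$ supplied by Lemma \ref{lem3d} at $n = 0$ (namely $C_v$, $C_v^{1-\rho}$, or $\le C_v^{1/p^{\ell_1}-s_1+1}$, with $|T_i|_v$ controlled correspondingly), and comparing the resulting explicit form against the algebraic constraint $\bar\xi \in \mu_{\gcd(s_1,s_2)}$. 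The Taylor-type bound $|\eta-\bar\eta|_v\le C_v^{1/p^{\ell_2}-s_2}$, also forced by $|\ep_2|_v\le C_v$, supplies additional rigidity. The main obstacle is precisely this residue-field comparison in the $\gcd(s_1,s_2)\ge 2$ regime, where naive absolute-value bookkeeping is compatible with $|\ep_1|_v = C_v$; invoking the symmetric analysis for $\de_n$ or the $\ep'_n$-analog from Lemma \ref{lem3e} is likely required to close the argument uniformly across all sub-cases.
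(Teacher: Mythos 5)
Your reduction is the same as the paper's: invoke Lemma~\ref{lem3d} at $n=1$ and observe that the lemma follows once $\lt\ep_1\rt_v=C_v$ is ruled out. At that point, however, the paper simply quotes the already-established bound $\lt\ep_1\rt_v<C_v$ from~\cite[Lemma~3.7]{GH24}, pausing only to note that the proof there never uses the hypothesis~\eqref{eqn_1a} and hence applies without change in the present setting. You instead try to reprove this from scratch, and that is where the gap is.

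Your residue argument correctly yields $\bar\xi^{d_2}=1$ (equivalently $\bar\xi\in\mu_{s_2}$, since $p\nmid s_2$) and, in the non-cancellation branch, also $\bar\xi\in\mu_{s_1}$, so the contradiction $\bar\xi=1$ versus $\bar\eta\neq 0$ does close the argument when $\gcd(s_1,s_2)=1$. But when $\gcd(s_1,s_2)\geq 2$, a nontrivial common root of unity for $\bar\xi$ is entirely consistent with $\lt\ep_1\rt_v=C_v$, and your final paragraph does not supply an actual argument: the proposed ``bootstrap'' through the leading coefficient of $\bar\eta$, the classification of $\lt\ep_0\rt_v$, and the Taylor-type bound is a collection of hoped-for ingredients rather than a proof, and you yourself concede that ``naive absolute-value bookkeeping is compatible with $\lt\ep_1\rt_v=C_v$'' and that some further input (the $\de_n$ or $\ep_n'$ analysis, or something else) is ``likely required.'' That is precisely the content of the cited external lemma, so the hard step is left open. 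To repair the argument you would either need to actually carry out the residue-field comparison in the $\gcd\geq 2$ case (which is nontrivial and not sketched here in a way that clearly terminates), or simply cite~\cite[Lemma~3.7]{GH24} together with the observation that its proof is independent of~\eqref{eqn_1a}, which is exactly what the paper does.
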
 

\begin{proof}
By Lemma \ref{lem3d}, it suffices to show that $\lt \ep_1 \rt_v < C_v$. This follows from the proof of \cite[Lemma 3.7]{GH24}. We remark that the condition
$$
p^{\ell_r} (s_r-1) > \max ( 1, p^{\ell_1} (s_1-1), \ldots, p^{\ell_{r-1}} (s_{r-1}-1) )
$$
was not used in the proof of \cite[Lemma 3.7]{GH24}, so the same argument applies in our setting.
\end{proof}

\subsection{Valuations of elements in \texorpdfstring{$S(f,\al)$}{S(f,a)}} \label{ss:overlinealpha}

The goal of this section is to find an element $\oal_v$ in the set
$$
S(f,\al):= \{ \oal \in \ol{L} : f(\oal)=f(\al)\}
$$
for each $v \in S$ satisfying the condition $| \oal_v - \al |_v = C_v^{1-\rho}$.
For every $\oal \in S(f, \al)$ and $v \in S$, we have $|f(\oal)|_v=|f(\al)|_v=C_v^{d_2}$ so 
$|\oal|_v > 1$. Then $|f(\oal)|_v = |\oal|_v^{d_2} = C_v^{d_2}$ so we deduce that $|\oal|_v = C_v$.

\begin{lem}\label{lem3:oal}
For each $v \in S$, there is an element $\oal_v\in S(f,\al)$ such that $|\oal_v-\al|_v=C_v^{1-\rho}$.
\end{lem}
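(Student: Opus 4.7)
The plan is to exhibit $\oal_v$ by controlling the $v$-adic valuations of the roots of
$$g(y) := f(\al + y) - f(\al) \in \ol{L}[y]$$
via its Newton polygon. Since $\ol{L}$ is algebraically closed, $\oal_v \in S(f,\al)$ if and only if $y = \oal_v - \al$ is a root of $g$, so it suffices to produce a nonzero $y_0 \in \ol{L}$ with $g(y_0) = 0$ and $|y_0|_v = C_v^{1-\rho}$; then $\oal_v := \al + y_0$ will do.

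Using the Frobenius identity $(\al+y)^{p^{\ell_i}} = \al^{p^{\ell_i}} + y^{p^{\ell_i}}$ in characteristic $p$, one expands
$$g(y) = c_1 \sum_{j=1}^{s_1}\binom{s_1}{j}\al^{p^{\ell_1}(s_1-j)} y^{p^{\ell_1} j} + \sum_{j=1}^{s_2}\binom{s_2}{j}\al^{p^{\ell_2}(s_2-j)} y^{p^{\ell_2} j},$$
so every nonzero monomial has $y$-degree of the form $p^{\ell_i} j$ with $1 \le j \le s_i$. Using $|\al|_v = C_v$, the two Newton-polygon points of interest are
$$A = \bigl(p^{\ell_1},\, -p^{\ell_1}(s_1-1)\log C_v\bigr), \qquad B = \bigl(p^{\ell_2},\, -p^{\ell_2}(s_2-1)\log C_v\bigr),$$
associated to the coefficients $c_1 s_1\al^{p^{\ell_1}(s_1-1)}$ (nonzero since $p \nmid s_1$) and $s_2 \al^{p^{\ell_2}(s_2-1)}$ plus any overlap from the $i=1$ sum. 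Any overlapping $i=1$ contribution has $v$-adic absolute value at most $C_v^{d_1-p^{\ell_2}}$, which is strictly less than $C_v^{d_2-p^{\ell_2}} = C_v^{p^{\ell_2}(s_2-1)}$ because $d_1 < d_2$, so the dominant term at $B$ is the $i=2$ one and $B$ sits at the claimed height. The slope of $\overline{AB}$ is then
$$\frac{p^{\ell_1}(s_1-1) - p^{\ell_2}(s_2-1)}{p^{\ell_2} - p^{\ell_1}}\log C_v = (1-\rho)\log C_v,$$
which is precisely the slope that produces roots of valuation $-(1-\rho)\log C_v$, i.e.\ of absolute value $C_v^{1-\rho}$.

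What remains is to verify that $\overline{AB}$ actually appears as a segment of the Newton polygon, i.e.\ every other monomial point $\bigl(p^{\ell_i} j,\, -p^{\ell_i}(s_i-j)\log C_v\bigr)$ lies on or above the line through $A$ and $B$. A short calculation reduces this for $i=1$ to $\rho(j-1) \ge 0$ and for $i=2$ to $d_2 \ge d_1$; both hold under our hypotheses. Since $\overline{AB}$ has horizontal length $p^{\ell_2} - p^{\ell_1} \ge 1$, standard Newton-polygon theory then supplies at least one root $y_0 \in \ol{L}$ of $g$ with $|y_0|_v = C_v^{1-\rho}$, completing the construction. The only delicate point is to ensure that $A$ and $B$ cannot be pulled strictly downward by cancellations at degrees $p^{\ell_1}$ or $p^{\ell_2}$; this is precisely what the assumptions $p \nmid s_1$, $p \nmid s_2$, and $d_1 < d_2$ rule out.
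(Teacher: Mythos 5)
Your proof is correct, but it follows a genuinely different route from the paper's. You compute the Newton polygon of $g(y) = f(\al + y) - f(\al)$ directly: you identify the vertices $A$ and $B$ at $y$-degrees $p^{\ell_1}$ and $p^{\ell_2}$, verify the coefficients there do not cancel (using $p \nmid s_1$, $p \nmid s_2$, and $|c_1 \binom{s_1}{p^{\ell_2 - \ell_1}} \al^{d_1 - p^{\ell_2}}|_v < |s_2 \al^{p^{\ell_2}(s_2-1)}|_v$ since $d_1 < d_2$), and check every other monomial point lies on or above the line $\overline{AB}$, whose slope is $(1-\rho)\log C_v$. This forces the polygon to contain that segment and supplies a root with $|y_0|_v = C_v^{1-\rho}$. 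The paper instead factors $g(x) = \bigl(x(x - x_1)\cdots(x - x_m)\bigr)^{p^{\ell_1}}$ with $m = p^{\ell_2-\ell_1}s_2 - 1$, uses the coefficient identity $x_1\cdots x_m = (-1)^m c_1^{1/p^{\ell_1}} s_1 \al^{s_1-1}$ together with $m > s_1 - 1$ to conclude that some $|x_i|_v < C_v$, and only then pins down the exact value $|x_i|_v = C_v^{1-\rho}$ by plugging $\oal_v = x_i + \al$ into the relation $f(\oal_v) = f(\al)$ and comparing the valuations $|\oal_v^{s_1} - \al^{s_1}|_v^{p^{\ell_1}} = |\oal_v^{s_2} - \al^{s_2}|_v^{p^{\ell_2}}$. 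Your Newton-polygon argument is more systematic and gives slightly finer information for free (e.g. the exact number of roots of each absolute value, which recovers Remark~\ref{rmk3oal} in one stroke), whereas the paper's argument is more elementary and avoids invoking Newton-polygon theory, at the cost of a two-step ``existence then exactness'' structure. One small imprecision in your write-up: cancellations at $A$ or $B$ would \emph{raise} those points (increase the additive valuation), not pull them downward, so the real risk is that a raised vertex destroys the slope-$(1-\rho)$ edge or moves the leftmost point; your verification that no cancellation occurs handles this correctly in any case.
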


\begin{proof}
The polynomial $g(x) := f(x+\al)-f(\al) \in L[x]$ is given by
\begin{align*}
g(x) &= \left(c_1^{\frac{1}{p^{\ell_1}}} ( (x+\al)^{s_1}-\al^{s_1})+((x+\al)^{s_2}-\al^{s_2})^{p^{\ell_2-\ell_1}} \right)^{p^{\ell_1}}\\
&= \left ( x^{p^{\ell_2-\ell_1}s_2}+\cdots + c_1^{\frac{1}{p^{\ell_1}}} s_1\al^{s_1-1}x \right )^{p^{\ell_1}}\\
&= \left( x(x-x_1)\cdots (x-x_m)\right)^{p^{\ell_1}},
\end{align*}
where $m = p^{\ell_2-\ell_1}s_2 - 1$ and $x_1, \ldots, x_m \in \ol{L}$. (For $r \in \olfp$, $r^{\frac{1}{p^{\ell_1}}}$ is the unique element $r_1 \in \olfp$ such that $r_1^{p^{\ell_1}}=r$.) Since $c_1^{\frac{1}{p^{\ell_1}}} s_1 \al^{s_1-1} \ne 0$, the elements $x_1, \ldots, x_m$ are nonzero. 
Then we have 
$$
x_1 \cdots x_m = (-1)^{m} c_1^{\frac{1}{p^{\ell_1}}} s_1 \al^{s_1-1}
$$ 
by the relation between roots and coefficients. Since $m+1 = \frac{s_1d_2}{d_1} > s_1$, we have $| x_1 \cdots x_m |_v = C_v^{s_1-1} < C_v^m$. This implies that for every $v \in S$, there exists some $i$ (which depends on $v$) such that $|x_i|_v < C_v$. 

Now we let $\oal_v := x_i + \al$. Since $g(x_i)=f(x_i+\al)-f(\al)=0$, we have $\oal_v \in S(f, \al)$. To prove the lemma, we need to prove $|x_i|_v = C_v^{1-\rho}$. By the relation
$$
f(\oal_v) - f(\al) = c_1(\oal_v^{s_1}-\al^{s_1})^{p^{\ell_1}} + (\oal_v^{s_2}-\al^{s_2})^{p^{\ell_2}} = 0,
$$
we have
$$
\lt \oal_v^{s_1} - \al^{s_1} \rt_v^{p^{\ell_1}}
= \lt \oal_v^{s_2} - \al^{s_2} \rt_v^{p^{\ell_2}}.
$$
If $s$ is a positive integer such that $p \nmid s$, then
\begin{equation*} \label{eq:val-pnmids}
\begin{split}
\lt \oal_v^{s} - \al^{s} \rt_v
& = \lt \oal_v - \al \rt_v \lt \sum_{i=0}^{s-1} \oal_v^{s-1-i} \al^{i} \rt_v \\
& = \lt \oal_v - \al \rt_v \lt (\oal_v-\al) \sum_{i=1}^{s-1} i \oal_v^{s-1-i}\al^{i-1} + s \al^{s-1} \rt_v \\
& = \lt \oal_v - \al \rt_v C_v^{s-1}.
\end{split}
\end{equation*}
(The last equality follows from the condition $\lt \oal_v - \al \rt_v = \lt x_i \rt_v < C_v$.) We conclude that
$$
(\lt \oal_v - \al \rt_v C_v^{s_1-1})^{p^{\ell_1}}
= (\lt \oal_v - \al \rt_v C_v^{s_2-1})^{p^{\ell_2}},
$$
which implies that $\lt \oal_v - \al \rt_v = C_v^{1-\rho}$.
\end{proof}

Regarding the proof of Lemma \ref{lem3:oal}, we have the following remarks. These remarks will be used in the proof of Theorem \ref{thm4d}.

\begin{rmk} \label{rmk3oal}
For every $\oal \in S(f, \al) \setminus \{ \al \}$ and $v \in S$, either $|\oal-\al|_v =C_v^{1-\rho}$ or $|\oal-\al|_v =C_v$. If $s_2=1$, then 
$$
\lt x_1 \cdots x_m \rt_v = C_v^{s_1-1} = C_v^{m(1-\rho)}
$$
so $|\oal-\al|_v =C_v^{1-\rho}$ for every $\oal \in S(f, \al) \setminus \{ \al \}$ and $v \in S$.

Now choose any $\oal_1 \in S(f,\al)$. Then $\al$ is preperiodic under $f_{\lambda}$ if and only if $\oal_1$ is preperiodic under $f_{\lambda}$. Replacing $\al$ with $\oal_1$, one can show that for every $v \in S$ and $\oal_1, \oal_2 \in S(f, \al)$ with $\oal_1 \ne \oal_2$, we have $\lt \oal_1-\oal_2 \rt_v = C_v^{1-\rho}$ or $\lt \oal_1-\oal_2 \rt_v = C_v$. Moreover, if $s_2=1$, then $\lt \oal_1-\oal_2 \rt_v = C_v^{1-\rho}$.
\end{rmk}

\begin{rmk} \label{rmk3oal2}
Let $g_1(x) := c_1^{\frac{1}{p^{\ell_1}}}x^{s_1} + x^{p^{\ell_2-\ell_1}s_2} \in L[x]$ (so $f(x)=g_1(x)^{p^{\ell_1}}$) and $g_2(x) := g_1(x)-g_1(\al) \in L[x]$. By the assumption $\al \notin \olfp$, we have $g_2(0) = -g_1(\al) \ne 0$. This implies that $g_2(x)$ and $g_2'(x) = c_1^{\frac{1}{p^{\ell_1}}}s_1 x^{s_1-1}$ have no common root, so $g_2(x)$ is separable. Thus we have
$$
\lt S(f, \al) \rt = \lt \{ \oal \in \ol{L} : g_2(\oal)=0 \} \rt 
= \deg(g_2) = p^{\ell_2-\ell_1}s_2.
$$
\end{rmk}

Let $\oal \in S(f,\al)$ and $\lamoa := \oal - f(\oal)$. Then $f_{\lamoa}(\al)=f_{\lamoa}^2(\al)=\oal$ so $\al$ is preperiodic under $f_{\lamoa}$ and thus $\widehat{h}_{v, \lamoa}(\al)=\widehat{h}_{v, \lamoa}(\be)=0$. 
Let $\ol{\ep_0} := \be - \oal$, $\ol{\ep_{n+1}} := f(\ol{\ep_n}+\oal)-f(\oal)$ and $\ol{\ep_n}' := \ol{\ep_n}-\ol{\ep_0}$ for every $n \ge 0$. 
Similarly, let $\ol{\de_0} := \oal-\be$, $\ol{\de_{n+1}} := f(\ol{\de_n} + \be) - f(\be)$ and $\ol{\de_n}' := \ol{\de_n} - \ol{\de_0}$ for each $n \ge 0$. 
(The definitions of $\ol{\ep_n}$, $\ol{\ep_n}'$, $\ol{\de_n}$ and $\ol{\de_n}'$ depend on the choice of $\oal \in S(f,\al)$.)

\begin{lem} \label{lem3:valoal}
For every $n \ge 1$ and $\oal \in S(f,\al)$, we have $\lt \ol{\ep_n} \rt_v = C_v$ 
or $\lt \ol{\ep_n} \rt_v = C_v^{1-\rho}$ 
or $$\lt \ol{\ep_n} \rt_v = \lt \ol{\ep_{n+1}} \rt_v^{1 / p^{\ell_1}} C_v^{-s_1+1} \le C_v^{1/p{^{\ell_1}}-s_1+1}.$$ 
When $s_2=1$, we get $\lt \ol{\ep_n} \rt_v = C_v^{1-\rho}$ 
or $$\lt \ol{\ep_n} \rt_v = \lt \ol{\ep_{n+1}} \rt_v^{1 / p^{\ell_1}} C_v^{-s_1+1} \le C_v^{\rho'}.$$
\end{lem}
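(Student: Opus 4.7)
The plan is to mimic the proof of Lemma \ref{lem3d} verbatim, with $\oal$ playing the role of $\al$ and $\ol{\ep_n}$ playing the role of $\ep_n$. The proof of Lemma \ref{lem3d} relies on only three ingredients: (i) $|\al|_v = C_v$ for every $v \in S$, (ii) the uniform bound $|\ep_n|_v \le C_v$ from Lemma \ref{lem3b}, and (iii) the recursive identity $\ep_{n+1} = f(\ep_n + \al) - f(\al)$, where the values of $\al$ and $\ep_n$ themselves are irrelevant beyond their $v$-adic absolute values. For the analogous statement, (i$'$) $|\oal|_v = C_v$ was already established in the paragraph defining $S(f,\al)$, and (iii$'$) $\ol{\ep_{n+1}} = f(\ol{\ep_n} + \oal) - f(\oal)$ is simply the definition.

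Hence the only substantive step is to establish (ii$'$): $|\ol{\ep_n}|_v \le C_v$ for every $v \in S$ and $n \ge 0$, which is the direct analog of Lemma \ref{lem3b}. By induction on $n$ one verifies that $\ol{\ep_n} = f_{\lamoa}^n(\be) - \oal$ for every $n \ge 0$. The paragraph preceding the lemma already notes that $f_{\lamoa}(\al) = f_{\lamoa}^2(\al) = \oal$ implies $\widehat{h}_{v, \lamoa}(\al) = 0$, and then Theorem \ref{thm:sameheight} together with the infiniteness of $\Prep(f; \al, \be)$ gives $\widehat{h}_{v, \lamoa}(\be) = 0$. If $|\ol{\ep_n}|_v > C_v$ for some $n$, then combined with $|\oal|_v = C_v$ one shows by induction on $m \ge n$ that $|\ol{\ep_m}|_v = |\ol{\ep_n}|_v^{d^{m-n}}$; this forces $|f_{\lamoa}^m(\be)|_v = |\ol{\ep_m} + \oal|_v$ to grow at rate $d^m$, contradicting $\widehat{h}_{v, \lamoa}(\be) = 0$. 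This is the exact parallel of the proof of Lemma \ref{lem3b}.

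With (i$'$), (ii$'$), (iii$'$) in hand, the case analysis (C1)--(C3) in the proof of Lemma \ref{lem3d} transports word for word, yielding the three stated cases for $|\ol{\ep_n}|_v$. The sharper statement when $s_2 = 1$, that $|\ol{\ep_n}|_v = C_v$ cannot occur, follows by the same formal check: if $s_2 = 1$ and $|\ol{\ep_n}|_v = C_v$, then in the expansion of $\ol{\ep_{n+1}} = f(\ol{\ep_n}+\oal) - f(\oal)$ the term arising from $x^{d_2}$ strictly dominates, producing $|\ol{\ep_{n+1}}|_v = C_v^{p^{\ell_2}} > C_v$, which violates (ii$'$). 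I do not anticipate a genuine obstacle here; the whole argument is a formal transport of the proofs of Lemmas \ref{lem3b} and \ref{lem3d} via the substitution $\al \mapsto \oal$, made legitimate by the two inputs $|\oal|_v = C_v$ and $\widehat{h}_{v, \lamoa}(\be) = 0$.
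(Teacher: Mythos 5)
Your proof is correct and matches the paper's argument: the paper also first derives $|\ol{\ep_n}|_v \le C_v$ from $\widehat{h}_{v,\lamoa}(\be)=0$ by the same growth-rate contradiction, and then finishes by ``imitating the proof of Lemma~\ref{lem3d}'' via the recursion $\ol{\ep_{n+1}} = f(\ol{\ep_n}+\oal)-f(\oal)$. Your write-up merely makes explicit the three inputs (i$'$), (ii$'$), (iii$'$) and the identity $\ol{\ep_n} = f_{\lamoa}^n(\be) - \oal$, which the paper leaves implicit.
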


\begin{proof}
Assume that $\lt \ol{\ep_n} \rt_v > C_v$ for some $v \in S$ and $n \ge 0$. By the relation $\ol{\ep_{n+1}} = f(\ol{\ep_n}+\oal)-f(\oal)$, we have $\lt \ol{\ep_m} \rt_v = \lt \ol{\ep_n} \rt_v^{d_2^{m-n}} > C_v^{d_2^{m-n}}$ for every $m \ge n$ so $\lt f^{m}_{\lamoa}(\be) \rt_v = \lt \ol{\ep_m}+\oal \rt_v > C_v^{d_2^{m-n}}$. 
This contradicts the fact that $\widehat{h}_{v, \lamoa}(\be)=0$, so $\lt \ol{\ep_n} \rt_v \le C_v$ for every $v \in S$ and $n \ge 0$. Using the relation $\ol{\ep_{n+1}} = f(\ol{\ep_n}+\oal)-f(\oal)$, we can finish the proof by imitating the proof of Lemma \ref{lem3d}.
\end{proof}

\begin{lem} \label{lem3:valoal'}
For every $n \ge 1$ and $\oal \in S(f,\al)$, we have $\lt \ol{\ep_n}' \rt_v = C_v$ 
or $\lt \ol{\ep_n}' \rt_v = C_v^{1-\rho}$ 
or $$\lt \ol{\ep_n}' \rt_v = \lt \ol{\ep_{n+1}}'-\ol{\ep_1}' \rt_v^{1 / p^{\ell_1}} C_v^{-s_1+1} \le C_v^{\frac{1}{p^{\ell_1}}-s_1+1}.$$
When $s_2=1$, $\lt \ol{\ep_n}' \rt_v = C_v^{1-\rho}$ 
or $$\lt \ol{\ep_n}' \rt_v = \lt \ol{\ep_{n+1}}'-\ol{\ep_1}' \rt_v^{1 / p^{\ell_1}} C_v^{-s_1+1} \le C_v^{\rho'}.$$
\end{lem}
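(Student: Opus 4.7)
The plan is to transcribe the proof of Lemma \ref{lem3e}, replacing $(\al,\be,\ep_n,\ep'_n)$ with $(\oal,\be,\ol{\ep_n},\ol{\ep_n}')$ throughout. The only nontrivial step is deriving the analogue of the recurrence $\ep'_{n+1}-\ep'_1 = f(\ep'_n+\be)-f(\be)$ that was exploited there. Since $\ol{\ep_0}=\be-\oal$, we have $\oal + \ol{\ep_n} = \be + \ol{\ep_n}'$, so
$$
\ol{\ep_{n+1}}' = \ol{\ep_{n+1}} - \ol{\ep_0} = f(\oal+\ol{\ep_n}) - f(\oal) - \ol{\ep_0} = f(\be+\ol{\ep_n}') - f(\oal) - (\be-\oal).
$$
Setting $n=0$ gives $\ol{\ep_1}' = f(\be)-f(\oal)-(\be-\oal)$, and subtracting yields the key identity
$$
\ol{\ep_{n+1}}' - \ol{\ep_1}' = f(\be+\ol{\ep_n}') - f(\be),
$$
which has exactly the shape of \eqref{eq3a} with $\al$ replaced by $\be$ and $\ep_n$ replaced by $\ol{\ep_n}'$.

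To run the case analysis (C1)--(C3) from the proof of Lemma \ref{lem3d} on this recurrence, I first need the a priori bound $|\ol{\ep_n}'|_v\le C_v$ for every $v\in S$. This follows from the ultrametric inequality combined with $|\ol{\ep_n}|_v\le C_v$ (Lemma \ref{lem3:valoal}) and $|\ol{\ep_0}|_v = |\be-\oal|_v \le \max(|\be|_v,|\oal|_v) = C_v$, where $|\oal|_v=C_v$ was established in the paragraph preceding Lemma \ref{lem3:oal} and $|\be|_v=C_v$ is Lemma \ref{lem3a}. With this bound in hand, the three alternatives fall out by the same trichotomy as in Lemma \ref{lem3d}: the case $|\ol{\ep_n}'|_v=C_v$; the case $1\le |\ol{\ep_n}'|_v<C_v$, where balancing the two monomials of degrees $d_1$ and $d_2$ forces $|\ol{\ep_n}'|_v = C_v^{1-\rho}$; and the case $|\ol{\ep_n}'|_v<1$, in which the $d_1$-monomial dominates and yields $|\ol{\ep_n}'|_v = |\ol{\ep_{n+1}}'-\ol{\ep_1}'|_v^{1/p^{\ell_1}} C_v^{-s_1+1}$.

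For the refinement when $s_2=1$, I would rule out $|\ol{\ep_n}'|_v=C_v$ by the argument at the end of the proof of Lemma \ref{lem3d}: if equality held, the monomial $(\ol{\ep_n}')^{p^{\ell_2}}$ would have absolute value $C_v^{p^{\ell_2}}$, and since $d_1 = p^{\ell_1}s_1 < p^{\ell_2} = d_2$, every other monomial on the right-hand side would be strictly smaller; hence $|\ol{\ep_{n+1}}'-\ol{\ep_1}'|_v = C_v^{p^{\ell_2}} > C_v$, contradicting $\max(|\ol{\ep_{n+1}}'|_v,|\ol{\ep_1}'|_v)\le C_v$. I do not anticipate any substantive obstacle, as the entire argument is a mechanical adaptation of Lemma \ref{lem3d} once the recurrence above is identified; spotting that recurrence is the only creative step.
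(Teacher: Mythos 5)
Your proposal is correct and follows essentially the same approach as the paper: the paper's proof is the one-liner "apply the proof of Lemma~\ref{lem3d} to the equation $\ol{\ep_{n+1}}'-\ol{\ep_1}'=f(\ol{\ep_n}'+\be)-f(\be)$," noting that $|\ol{\ep_n}'|_v\le C_v$ by Lemma~\ref{lem3:valoal}. You derive precisely this recurrence and the same a priori bound, then run the same case analysis, so the two arguments coincide.
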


\begin{proof}
Apply the proof of Lemma \ref{lem3d} to the equation $\ol{\ep_{n+1}}'-\ol{\ep_1}'=f(\ol{\ep_n}'+\be)-f(\be)$. (We remark that $\lt \ol{\ep_n}' \rt_v = \lt \ol{\ep_n} - \ol{\ep_0} \rt_v \le C_v$ by Lemma \ref{lem3:valoal}.)
\end{proof}

Replacing $\oal$ with $\be$, we obtain the same results for $| \ol{\de_n} |_v$ and $| \ol{\de_n}' |_v$ for every $n \ge 1$.

\section{Proof of the main theorems} \label{Secmain}

Assume that $f \in \olfp[x]$ is given as in Setup \ref{setup_f}.
For each $v \in S$, let $S_v(f,\al)$ be the subset of $S(f,\al)$ consisting of $\oal \in S(f,\al)$ such that $|\oal-\al|_v=C_v^{1-\rho}$. By Lemma \ref{lem3:oal}, the set $S_v(f,\al)$ is nonempty for each $v \in S$. For a given $\oal \in S(f,\al)$, the element $\ol{\ep_2} - \ep_2$ (depends on $\oal$) is given by
\begin{equation} \label{eq:thm42-a}
\begin{split}
\ol{\ep_2}-\ep_2 
&= (f(\ep_1+\oal)-f(\oal))-(f(\ep_1+\al)-f(\al)) \\
&= c_1((\ep_1+\oal)^{s_1} - \oal^{s_1} - (\ep_1+\al)^{s_1} + \al^{s_1})^{p^{\ell_1}} + ((\ep_1+\oal)^{s_2} - \oal^{s_2} - (\ep_1+\al)^{s_2} + \al^{s_2})^{p^{\ell_2}} \\
&= c_1 \Big( \sum_{j=1}^{s_1-1} \binom{s_1}{j} \ep_1^{s_1-j}(\oal^{j}-\al^j) \Big)^{p^{\ell_1}} 
+ \Big( \sum_{j=1}^{s_2-1} \binom{s_2}{j} \ep_1^{s_2-j}(\oal^{j}-\al^j) \Big)^{p^{\ell_2}}.
\end{split}
\end{equation}

\subsection{The case \texorpdfstring{$s_2=1$}{s2=1}} \label{ss:41} 
In this section, we prove Theorem \ref{thm_main4} under the assumption $s_2=1$. In this case, the condition in Theorem \ref{thm_main4} is satisfied automatically. By some technical reasons, we prove the case $s_1=2$ (Theorem \ref{thm4c}) first and then prove the case $s_1 \ge 3$ (Theorem \ref{thm4d}). 

\begin{thm} \label{thm4c} 
Under Setup \ref{setup_f}, assume that $(s_1, s_2)=(2,1)$. Then $\lt \Prep(f; \al, \be) \rt = \infty$ if and only if $\al,\be \in \olfp$ or $f(\al)=f(\be)$.
\end{thm}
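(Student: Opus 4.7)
The plan is to handle both directions. The converse is standard: if $\al, \be \in \olfp$, then $\olfp \subseteq \Prep(f;\al,\be)$, because every $f_\lambda$-orbit with $\lambda \in \olfp$ lies in a finite field; and if $f(\al) = f(\be)$, then $f_\lambda(\al) = f_\lambda(\be)$ for every $\lambda$, so $\Prep(f;\al,\be)$ coincides with the set of $\lambda$ making $\al$ preperiodic under $f_\lambda$, which is infinite by \cite[Proposition 2.3]{GH24}.

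For the forward direction, assume $|\Prep(f;\al,\be)| = \infty$. By Lemma \ref{lem3a} we may suppose $S \neq \emptyset$, and it suffices to show $\ep_1 := f(\be) - f(\al) = 0$. Suppose for contradiction $\ep_1 \neq 0$. For each $v \in S$, choose $\oal_v \in S_v(f,\al)$ via Lemma \ref{lem3:oal}, so that $|\oal_v - \al|_v = C_v^{1-\rho}$. Plugging $s_1 = 2$ and $s_2 = 1$ into \eqref{eq:thm42-a} with $\oal = \oal_v$, the $s_2$-sum is empty while the $s_1$-sum reduces to the single term $2\ep_1(\oal_v - \al)$ (using $p \neq 2$), giving
$$
\ol{\ep_2} - \ep_2 = c_1\bigl(2\,\ep_1\,(\oal_v - \al)\bigr)^{p^{\ell_1}},
\qquad |\ol{\ep_2} - \ep_2|_v = |\ep_1|_v^{p^{\ell_1}} C_v^{p^{\ell_1}(1-\rho)}.
$$
On the other hand, by the $s_2 = 1$ conclusions of Lemmas \ref{lem3d} and \ref{lem3:valoal}, both $|\ep_2|_v$ and $|\ol{\ep_2}|_v$ are at most $C_v^{1-\rho}$, so the ultrametric inequality forces $|\ol{\ep_2} - \ep_2|_v \le C_v^{1-\rho}$. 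Rearranging yields $|\ep_1|_v \le C_v^{(1-\rho)(1/p^{\ell_1} - 1)} \le 1$, where we use $\rho \in (0,1)$ (as $\rho = 1$ would force $p^{\ell_2}(s_2-1) = p^{\ell_1}(s_1-1)$, i.e., $0 = p^{\ell_1}$). Since $|\ep_1|_v \le 1$ is automatic for $v \notin S$, property (iii) of $\Omega$ forces $\ep_1 \in \olfp$.

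The main obstacle is to exclude the residual possibility $\ep_1 \in \olfp^*$, for which the bound above is sharp and conveys no new information. My plan is a direct Frobenius computation: since $\ep_1 \in \olfp$, we have $(\ep_1 + \al)^{p^{\ell_i}} = \ep_1^{p^{\ell_i}} + \al^{p^{\ell_i}}$ for $i = 1, 2$, and expanding the $d_1 = 2p^{\ell_1}$-th power yields
$$
\ep_2 = 2 c_1 \ep_1^{p^{\ell_1}} \al^{p^{\ell_1}} + c_1 \ep_1^{2 p^{\ell_1}} + \ep_1^{p^{\ell_2}},
$$
whose last two terms lie in $\olfp$. For $v \in S$, the main term has $v$-adic absolute value $C_v^{p^{\ell_1}}$, strictly dominating the bounded ones, so $|\ep_2|_v = C_v^{p^{\ell_1}}$. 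Since $p^{\ell_1} \ge 1 > 1 - \rho$, this contradicts the bound $|\ep_2|_v \le C_v^{1-\rho}$ from Lemma \ref{lem3d}. Hence $\ep_1 = 0$, i.e., $f(\al) = f(\be)$, completing the proof.
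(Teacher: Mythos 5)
Your proof is correct, but it takes a genuinely different route from the paper's. The paper's proof of Theorem \ref{thm4c} uses the symmetric identity
$\ep_2+\de_2 = c_1(2\ep_1\ep'_1)^{p^{\ell_1}}$
(which also crucially exploits $s_1=2$), applies the product formula to conclude $\ep_1\ep'_1=0$ after a somewhat delicate case analysis involving $\lt \ep_1\ep'_1 / \ep_0 \rt_v$, and only then brings in $\oal$ to break the residual ambiguity between $\ep_1=0$ and $\ep'_1=0$. You instead put $\oal_v$ at the center from the outset: the identity $\ol{\ep_2}-\ep_2 = c_1\bigl(2\ep_1(\oal_v-\al)\bigr)^{p^{\ell_1}}$ together with the bounds $\lt\ep_2\rt_v,\lt\ol{\ep_2}\rt_v \le C_v^{1-\rho}$ from Lemmas \ref{lem3d} and \ref{lem3:valoal} gives $\lt\ep_1\rt_v\le 1$ directly, hence $\ep_1\in\olfp$; you then rule out $\ep_1\in\olfp^*$ by the explicit Frobenius expansion $\ep_2 = 2c_1\ep_1^{p^{\ell_1}}\al^{p^{\ell_1}} + c_1\ep_1^{2p^{\ell_1}} + \ep_1^{p^{\ell_2}}$, whose dominant term forces $\lt\ep_2\rt_v = C_v^{p^{\ell_1}} > C_v^{1-\rho}$, a contradiction. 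Structurally your first step is the $s_1=2$ degeneration of the $\ol{\ep_2}-\ep_2$ machinery the paper deploys in Theorem \ref{thm4_general}; but there the conclusion is extracted by a degree count on $g_r$, which collapses when $s_1=2$ (the relevant polynomial is constant), so your second step supplies a replacement argument not present in either of the paper's $s_2=1$ proofs. Your route avoids $\de_n$ entirely and sidesteps the $\lt\ep_1\rt_v = C_v^{1-\rho}$, $\lt\ep'_1\rt_v = C_v^{\rho-1}$ case analysis, at the cost of one extra explicit valuation computation; both buy a complete proof, and yours is arguably the more streamlined of the two.
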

	
\begin{proof}
Assume that $\lt \Prep(f; \al, \be) \rt = \infty$, $S \ne \emptyset$ and $\ep_1\ep'_1 \ne 0$. By Lemma \ref{lem3d} (and its analogue for $\de_n$), the condition $s_2=1$ implies that $\lt \ep_2+\de_2 \rt_v \le C_v^{1-\rho}$.
Since
$$
\ep_2+\de_2
= f(\ep_1+\al)-f(\al)+f(-\ep_1+\be)-f(\be)
= c_1(2 \ep_1 \ep'_1)^{p^{\ell_1}},
$$
we have 
$$
\lt \ep_1 \ep'_1 \rt_v = \lt \ep_2+\de_2 \rt_v^{1/p^{\ell_1}} \le C_v^{1-\rho} < (C_v^{1-\rho})^2. 
$$
Hence at least one of $\lt \ep_1 \rt_v$ and $\lt \ep'_1 \rt_v$ is smaller than $C_v^{1-\rho}$. Lemmas \ref{lem3d} and \ref{lem3e} imply that
$$
\lt \ep_1\ep'_1 \rt_v \le C_v^{1-\rho+\rho'}
= C_v^{(s_1-1) \frac{2p^{\ell_1}-p^{\ell_2}+1}{p^{\ell_2}-p^{\ell_1}}}
= C_v^{\frac{d_1-d_2+1}{p^{\ell_2}-p^{\ell_1}}} \le 1.
$$
(Recall that we write $\rho' = \frac{1-\rho}{p^{\ell_1}}-s_1+1$ when $s_2=1$.) By the product formula, we have $\lt \ep_1\ep'_1 \rt_v = 1$ for every $v \in S$. This only happens when one of $\lt \ep_1 \rt_v$ and $\lt \ep'_1 \rt_v$ is $C_v^{1-\rho}>1$ and the other is $C_v^{\rho-1} < 1$. 
In this case, $\lt \ep_0 \rt_v = \lt \ep_1 - \ep'_1 \rt_v = C_v^{1-\rho}$ so
$$
\lt \frac{\ep_1 \ep'_1}{\ep_0} \rt_v \le \frac{1}{C_v^{1-\rho}} < 1
$$
for every $v \in S$. We also have $\ep_1 = f(\be)-f(\al) = \ep_0 g(\al, \be)$ for some $g(x,y) \in \olfp[x,y]$ so 
$$
\lt \frac{\ep_1 \ep'_1}{\ep_0} \rt_v
= \lt g(\al, \be) \ep'_1 \rt_v \le 1
$$
for every $v \notin S$. This contradicts the product formula, so we conclude that $\ep_1 \ep'_1=0$. 

Choose any $\oal \in S(f,\al) \setminus \{ \al \}$ and apply the same procedure to 
$$
\ol{\ep_2}+\ol{\de_2} = (2 \ol{\ep_1} \ol{\ep_1}')^{p^{\ell_1}} = (2 \ep_1 \ol{\ep_1}')^{p^{\ell_1}}.
$$
Then we deduce that $\ep_1 \ol{\ep_1}'=0$. Since $\ol{\ep_1}' = \ep_1 - \ol{\ep_0} = \ep'_1 + (\ol{\al}-\al) \ne \ep'_1$, at least one of $\ep'_1$ and $\ol{\ep_1}'$ is nonzero. Thus we have $\ep_1=0$. 

Now we prove the ``if'' part of the proof. If $\al, \be \in \olfp$, then both $\al$ and $\be$ are preperiodic under $f_\lambda$ for each $\lambda \in \olfp$ so the set $\Prep(f; \al, \be)$ is infinite. If $\ep_1=0$, then $f_\lambda(\al)=f_\lambda(\be)$ for every $\lambda \in \ol{L}$ so $\al$ is preperiodic under $f_\lambda$ if and only if $\be$ is preperiodic under $f_\lambda$.
Since there are infinitely many $\lambda \in \ol{L}$ such that $\al$ is preperiodic under $f_\lambda$ (\cite[Proposition 2.3]{GH24}), we conclude that the set $\Prep(f; \al, \be)$ is infinite.
\end{proof}

\begin{thm}\label{thm4d}
Under Setup \ref{setup_f}, assume that $s_1 \ge 3$ and $s_2=1$. Then $\lt \Prep(f; \al, \be) \rt = \infty$ if and only if $\al,\be \in \olfp$ or $f(\al)=f(\be)$.
\end{thm}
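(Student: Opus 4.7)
My plan is to adapt the strategy of Theorem~\ref{thm4c} by replacing the swap $\alpha\leftrightarrow\beta$ with a single auxiliary element $\oal\in S(f,\alpha)\setminus\{\alpha\}$, exploiting the fact that, for $s_2=1$, every such $\oal$ satisfies $|\oal-\alpha|_v=C_v^{1-\rho}$ at every $v\in S$ simultaneously (Remark~\ref{rmk3oal}). I would begin by assuming, for contradiction, that $|\Prep(f;\alpha,\beta)|=\infty$, that $S\ne\emptyset$, and that $\ep_1\ne 0$. By Remark~\ref{rmk_reduction} we may take $f$ monic (so $c_2=1$), and since $|S(f,\alpha)|=p^{\ell_2-\ell_1}\ge 2$ by Remark~\ref{rmk3oal2}, I fix any $\oal\in S(f,\alpha)\setminus\{\alpha\}$.

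Setting $\lambda_{\oal}:=\oal-f(\oal)$, the point $\alpha$ is preperiodic under $f_{\lambda_{\oal}}$ (mapping to the fixed point $\oal$), so Theorem~\ref{thm:sameheight} yields $\widehat{h}_{v,\lambda_{\oal}}(\beta)=0$. Lemma~\ref{lem3:valoal} (with $s_2=1$) and Lemma~\ref{lem3d} then give $|\ol{\ep_2}-\ep_2|_v\le C_v^{1-\rho}$ for every $v\in S$. Specializing identity \eqref{eq:thm42-a} to $s_2=1$ and using the relation $\eta^{p^{\ell_2}}=-c_1(\oal^{s_1}-\alpha^{s_1})^{p^{\ell_1}}$ coming from $f(\oal)=f(\alpha)$ (with $\eta:=\oal-\alpha$), I obtain
\[
\ol{\ep_2}-\ep_2 \;=\; c_1\,G_{\oal}(\ep_1)^{p^{\ell_1}},\qquad
G_{\oal}(x)\;:=\;\sum_{k=1}^{s_1-1}\binom{s_1}{k}\,x^{k}\,\bigl(\oal^{s_1-k}-\alpha^{s_1-k}\bigr),
\]
so that $|G_{\oal}(\ep_1)|_v\le C_v^{(1-\rho)/p^{\ell_1}}$ for every $v\in S$.

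The decisive step is to rule out the alternative $|\ep_1|_v=C_v^{1-\rho}$ of Lemma~\ref{lem3d} at every $v\in S$: once excluded, that lemma forces $|\ep_1|_v\le C_v^{\rho'}$, and since $s_1\ge 3$ makes $\rho'=\tfrac{1-\rho}{p^{\ell_1}}-(s_1-1)\le-1$, we get $|\ep_1|_v<1$ on $S$. Combined with $|\ep_1|_v\le 1$ for $v\notin S$, the product formula then forces $\prod_v|\ep_1|_v<1$, hence $\ep_1=0$, contradicting our assumption. To rule out $|\ep_1|_v=C_v^{1-\rho}$, I write $s_1-k=p^{a_k}m_k$ with $p\nmid m_k$; using $|\oal-\alpha|_v=C_v^{1-\rho}$, each monomial of $G_{\oal}(\ep_1)$ satisfies
\[
\Bigl|\binom{s_1}{k}\,\ep_1^{k}\,(\oal^{s_1-k}-\alpha^{s_1-k})\Bigr|_v \;\le\; C_v^{s_1-(k+p^{a_k})\rho},
\]
with equality when $p\nmid\binom{s_1}{k}(s_1-k)$. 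Since $s_1-1$ and $s_1-2$ are consecutive integers, at most one is divisible by $p$, so $\min_{k\in\{1,2\}}(k+p^{a_k})\le 3$, and the dominant monomial has valuation $\ge C_v^{s_1-3\rho}$. An elementary check shows $s_1-3\rho>(1-\rho)/p^{\ell_1}$ whenever $s_1\ge 3$ and $\rho<1$ (both guaranteed by the hypotheses), contradicting the above upper bound for $|G_{\oal}(\ep_1)|_v$.

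The hard part will be verifying that the dominant monomial is not canceled by another monomial of equal valuation. This is a real concern only in the delicate sub-case $p=2$ with $s_1\equiv 3\pmod{4}$, where both $k=1$ and $k=2$ achieve $k+p^{a_k}=3$; expanding the leading parts in $\eta$, the would-be cancellation requires $\ep_1\equiv-\eta\pmod{\text{lower valuation}}$, a condition that depends on the choice of $\oal$. Since there are $p^{\ell_2-\ell_1}-1\ge 1$ valid candidates and at most one produces this alignment for a given $\ep_1$, one simply swaps $\oal$ for another element of $S(f,\alpha)\setminus\{\alpha\}$ when necessary. The converse direction is routine: if $\alpha,\beta\in\olfp$ both are preperiodic for every $\lambda\in\olfp$; and if $f(\alpha)=f(\beta)$, then $f_{\lambda}(\alpha)=f_{\lambda}(\beta)$ for all $\lambda$, with infinitely many $\lambda$ making $\alpha$ preperiodic by \cite[Proposition 2.3]{GH24}.
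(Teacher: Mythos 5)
Your approach shares the correct starting point with the paper --- fix $\oal\in S(f,\al)\setminus\{\al\}$, use $\widehat{h}_{v,\lamoa}(\be)=0$ to get $|\ol{\ep_2}-\ep_2|_v\le C_v^{1-\rho}$, and specialize \eqref{eq:thm42-a} to $s_2=1$. Your $G_{\oal}(\ep_1)$ is, up to the factor $\oal-\al$, exactly the paper's $g(\oal)$. But from there you diverge: the paper passes to a \emph{global} argument (product formula applied to divided differences of $g$, forcing $g$ to take a single value on all $p^{\ell_2-\ell_1}-1$ elements of $S(f,\al)\setminus\{\al\}$, then a degree count $\deg g = s_1-2 < p^{\ell_2-\ell_1}-1$), whereas you try to derive a \emph{local} contradiction at each $v\in S$ by finding a dominant monomial. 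This local route is where the gap lies.

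Concretely, your claim that the dominant nonvanishing monomial has $k+p^{a_k}\le 3$ is false in several cases. For $p>2$ with $p\mid s_1-1$ you have $\binom{s_1}{2}=\tfrac{s_1(s_1-1)}{2}\equiv 0\pmod p$, so the $k=2$ monomial is identically zero, and $k=1$ gives $1+p^{a_1}\ge 1+p>3$. For $p=2$ with $s_1\equiv 1\pmod 4$, $\binom{s_1}{2}\equiv 0\pmod 2$ as well, and $k=1$ gives $1+2^{a_1}\ge 5$; by Lucas' theorem the nonvanishing $\binom{s_1}{k}$ then occur only for $k\equiv 0,1\pmod 4$, so the dominant index is not visible from $\{1,2\}$ at all. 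You identified only the sub-case $p=2$, $s_1\equiv 3\pmod 4$ as ``delicate,'' but these other cases are equally problematic and break the bound $C_v^{s_1-3\rho}$ outright. Moreover, even granting the correct dominant exponent, the claim that ``at most one'' choice of $\oal$ produces cancellation is unjustified: cancellation among equal-valuation monomials is an algebraic condition on $\oal$ (and on $\ep_1$), and nothing in the argument bounds the number of $\oal$'s satisfying it by one. The paper's route sidesteps all of this: it never needs to locate a dominant monomial or control cancellation, because the degree bound $\deg g = s_1-2$ (using only $\binom{s_1}{1}=s_1\ne 0$ in $\olfp$) together with $|S(f,\al)\setminus\{\al\}|=p^{\ell_2-\ell_1}-1 > s_1-2$ (a consequence of $d_1=p^{\ell_1}s_1<d_2=p^{\ell_2}$) delivers the contradiction cleanly.
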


\begin{proof}
Assume that $\lt \Prep(f; \al, \be) \rt = \infty$ and $S \ne \emptyset$. For every $v \in S$ and $\oal \in S(f,\al) \setminus \{ \al \}$, $\lt \oal-\al \rt_v \ge C_v^{1-\rho}$ by Remark \ref{rmk3oal} and $|\ol{\ep_2}-\ep_2|_v \le C_v^{1-\rho}$ by Lemmas \ref{lem3d} and \ref{lem3:valoal}.
By the assumption $s_2=1$, \eqref{eq:thm42-a} gives
$$
\lt \frac{(\ol{\ep_2}-\ep_2)^{1/p^{\ell_1}}}{\oal - \al} \rt_v
= \lt \sum_{j=1}^{s_1-1} \binom{s_1}{j}\ep_1^{s_1-j}\frac{\oal^j-\al^j}{\oal-\al} \rt_v
= \lt g(\oal) \rt_v
\le 1
$$
for every $v \in S$ and $\oal \in S(f,\al) \setminus \{ \al \}$,
where the polynomial $g \in L[x]$ is given by
$$
g(x) := \sum_{j=1}^{s_1-1} \binom{s_1}{j}\ep_1^{s_1-j}\frac{x^j-\al^j}{x-\al} \in L[x].
$$

Choose $\oal_1, \oal_2 \in S(f,\al) \setminus \{ \al \}$ with $\oal_1 \ne \oal_2$. Then $\lt \oal_2-\oal_1 \rt_v = C_v^{1-\rho}$ by Remark \ref{rmk3oal} so
$$
\lt \frac{g(\oal_2)-g(\oal_1)}{\oal_2 - \oal_1} \rt_v < 1
$$
for every $v \in S$. We also have 
$$
\lt \frac{g(\oal_2)-g(\oal_1)}{\oal_2 - \oal_1} \rt_v \le 1
$$
for every $v \notin S$ so the product formula implies that $g(\oal_1)=g(\oal_2)$. 

Fix an element $\oal_0 \in S(f,\al) \setminus \{ \al \}$. By the above argument, we have $g(\oal)=g(\oal_0)$ for every $\oal \in S(f,\al) \setminus \{ \al \}$. By Remark \ref{rmk3oal2},
$$
\lt S(f,\al) \setminus \{ \al \} \rt = p^{\ell_2-\ell_1}s_2-1
$$
so the equation $g(x)=g(\oal_0)$ has $p^{\ell_2-\ell_1}s_2-1$ distinct roots. 

Assume that $\ep_1 \ne 0$, so the polynomial $g$ has degree $s_1-2 \ge 1$. Fix an element $\oal_0 \in S(f,\al) \setminus \{ \al \}$. Then the equation $g(x)=g(\oal_0)$ has at most $s_1-2$ solutions in $\ol{L}$. 
However, we have shown that $g(\oal)=g(\oal_0)$ for every $\oal \in S(f,\al) \setminus \{ \al \}$ and 
$$
\lt S(f,\al) \setminus \{ \al \} \rt = p^{\ell_2-\ell_1}s_2-1 > s_1-2
$$
by Remark \ref{rmk3oal2}. This contradiction implies that $\ep_1=0$. The proof of the ``if'' part can be done as in the proof of Theorem \ref{thm4c}.
\end{proof}

\subsection{The case \texorpdfstring{$s_2>1$}{s2>1} and \texorpdfstring{$\rho<1$}{rho>1}} \label{ss:42} 

In this section, we prove Theorem \ref{thm_main4} under the assumption $s_2>1$. The following theorem is the most important result in this paper, and its proof is also the most technical.

\begin{thm} \label{thm4_general}
Under Setup \ref{setup_f}, assume that $s_2>1$ and
$$
p^{\ell_2}(s_2-1) < p^{\ell_1}(s_1-1).
$$
Then $\lt \Prep(f; \al, \be) \rt = \infty$ if and only if $\al,\be\in \olfp$ or $f(\al)=f(\be)$.
\end{thm}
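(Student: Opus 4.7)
The plan is to argue by contradiction. Suppose $|\Prep(f;\al,\be)| = \infty$ but $\ep_1 := f(\be) - f(\al) \ne 0$ and $S \ne \emptyset$; by Remark~\ref{rmk_reduction} we may assume $f$ is monic. For each $v \in S$, Lemma~\ref{lem3:oal} supplies $\oal = \oal_v \in S(f,\al)$ with $|\oal - \al|_v = C_v^{1-\rho}$, and we set $\lamoav := \oal - f(\oal)$. Since $\al$ is preperiodic under $f_{\lamoav}$ (it maps to $\oal$, which is fixed), Theorem~\ref{thm:sameheight} forces $\widehat{h}_{v', \lamoav}(\be) = 0$ at every place $v'$, and Lemma~\ref{lem3:valoal} then gives the a priori bound $|\ol{\ep_n}|_{v'} \le C_{v'}$ for all $n \ge 0$ and $v' \in S$. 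The target contradiction will come from combining this with the central identity~\eqref{eq:thm42-a} and the product formula, in the spirit of Theorem~\ref{thm4d}.

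The key manipulation goes as follows. Identity~\eqref{eq:thm42-a} yields $\ol{\ep_2} - \ep_2 = c_1 T_1^{p^{\ell_1}} + T_2^{p^{\ell_2}}$ where $T_i = \sum_{j=1}^{s_i - 1}\binom{s_i}{j}\ep_1^{s_i - j}(\oal^j - \al^j)$. Since $\oal - \al$ divides each $T_i$ and $\ell_2 > \ell_1$, dividing through by $(\oal - \al)^{p^{\ell_1}}$ produces an element $g(\oal) \in L[\oal]$ whose second summand carries the explicit factor $(\oal - \al)^{p^{\ell_2} - p^{\ell_1}}$. At each $v \in S$ one controls $|g(\oal_v)|_v$ by splitting via Lemma~\ref{lem3:valep1}: in the generic regime $|\ep_1|_v = C_v^{1-\rho}$, the a priori bounds $|\ep_2|_v, |\ol{\ep_2}|_v \le C_v$ force cancellations between the two dominant monomials in the Taylor-type expansion of $\ol{\ep_2} - \ep_2$ (whose valuations would otherwise exceed $C_v$ precisely because $\rho < 1$), yielding $|g(\oal_v)|_v \le 1$; in the subordinate regime $|\ep_1|_v \le C_v^{1/p^{\ell_1} - s_1 + 1}$ the same bound follows by direct term-by-term estimation using Lemmas~\ref{lem3d} and~\ref{lem3:valoal}. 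For $v \notin S$, a Newton polygon argument on the monic polynomial $f(x) - f(\al)$—whose constant term has $v$-absolute value $\le 1$—shows $|\oal|_v \le 1$ for every $\oal \in S(f,\al)$, whence $|g(\oal_v)|_v \le 1$ trivially. The product formula then forces $g(\oal) \in \olfp$, and varying $\oal$ over $S(f,\al) \setminus \{\al\}$—which has $p^{\ell_2-\ell_1}s_2 - 1$ elements by Remark~\ref{rmk3oal2}—while comparing $g(\oal_1)$ and $g(\oal_2)$ for distinct choices via the estimate on $|\oal_1 - \oal_2|_v$ from Remark~\ref{rmk3oal}, exactly as in the proof of Theorem~\ref{thm4d}, produces more distinct values than the degree of $g(x)$ allows. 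Since the leading coefficient of $g(x)$ is a nonzero monomial in $\ep_1$, this forces $\ep_1 = 0$, the desired contradiction.

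The principal technical obstacle is the refined valuation analysis above: for $s_2 > 1$ the identity for $\ol{\ep_2} - \ep_2$ mixes two Frobenius layers, so the ``extract a $p^{\ell_1}$-th root'' maneuver of Theorem~\ref{thm4d} is no longer available. The two summands must be bounded separately using both $|\ep_2|_v \le C_v$ and $|\ol{\ep_2}|_v \le C_v$, and the strict gap $p^{\ell_2}(s_2-1) < p^{\ell_1}(s_1-1)$ is essential: it is exactly what makes the exponent $p^{\ell_2} - p^{\ell_1}$ in the factor $(\oal - \al)^{p^{\ell_2} - p^{\ell_1}}$ strictly positive, ensuring the second summand of $g$ is negligible in valuation at the generic $v \in S$. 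In the boundary case $\rho = 1$ this mechanism collapses, which is precisely why Theorem~\ref{thm_main5} must leave room for the exceptional relation $(f(\be) - f(\al))^{p^{\ell_2} - p^{\ell_1}} = -c_1 s_1 / s_2$.
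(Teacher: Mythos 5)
Your proposal is built on a premise that is in fact false, and this leads to two concrete failures: a valuation bound that does not hold, and a degree count that is too large.

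You assert that ``for $s_2 > 1$ the identity for $\ol{\ep_2} - \ep_2$ mixes two Frobenius layers, so the `extract a $p^{\ell_1}$-th root' maneuver of Theorem~\ref{thm4d} is no longer available.'' This is not so. Writing $\ol{\ep_2} - \ep_2 = c_1 T_1^{p^{\ell_1}} + T_2^{p^{\ell_2}}$ as you do, additivity of the Frobenius in characteristic $p$ gives
$$
c_1 T_1^{p^{\ell_1}} + T_2^{p^{\ell_2}} = \Bigl(c_1^{1/p^{\ell_1}} T_1 + T_2^{\,p^{\ell_2-\ell_1}}\Bigr)^{p^{\ell_1}},
$$
so the $p^{\ell_1}$-th root exists as the genuine polynomial $c_1^{1/p^{\ell_1}} T_1 + T_2^{\,p^{\ell_2-\ell_1}}$, of degree $s_1 - 1$ in $\oal$ precisely because the hypothesis $(s_2-1)p^{\ell_2-\ell_1} < s_1-1$ makes the first summand dominant. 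The paper's $g_1$ is exactly this root divided once by $(x - \al)$, so $\deg g_1 = s_1 - 2$. Having discarded this move, you instead divide the un-rooted quantity by $(\oal - \al)^{p^{\ell_1}}$; the resulting polynomial has degree $(s_1-2)p^{\ell_1}$, which is a factor of $p^{\ell_1}$ too large. For instance with $p = 2$, $\ell_1 = 2$, $\ell_2 = 4$, $s_2 = 3$, $s_1 = 11$ (a valid instance of Setup~\ref{setup_f} with $d_1 = 44 < 48 = d_2$ and $p^{\ell_2}(s_2-1) = 32 < 40 = p^{\ell_1}(s_1-1)$), your $g$ has degree $36$, while $|S(f,\al)\setminus\{\al\}| = p^{\ell_2-\ell_1}s_2 - 1 = 11$. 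So the root-counting contradiction never materializes, no matter how you compare the values $g(\oal_i)$.

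The valuation step has the same problem. Your direct estimate gives $|g(\oal_v)|_v \le C_v/C_v^{p^{\ell_1}(1-\rho)} = C_v^{1 - p^{\ell_1}(1-\rho)}$, and the exponent $p^{\ell_1}(1-\rho)$ is not bounded below by $1$: in the same example $1-\rho = 1/4$ and $p^{\ell_1}(1-\rho) = 1$, but with $p=3$, $\ell_1 = 0$, $\ell_2 = 1$, $s_2 = 2$, $s_1 = 5$ one gets $1-\rho = 1/2$ and $p^{\ell_1}(1-\rho) = 1/2 < 1$, so the bound is $C_v^{1/2} > 1$. You invoke ``cancellations'' forced by $|\ep_2|_v, |\ol{\ep_2}|_v \le C_v$ to rescue this, but no mechanism is given, and Lemmas~\ref{lem3d} and~\ref{lem3:valoal} only record which discrete values $|\ep_2|_v$, $|\ol{\ep_2}|_v$ can take — they do not yield any improved estimate on $|\ol{\ep_2} - \ep_2|_v$ beyond $\le C_v$. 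The paper's actual fix is entirely different: after taking the $p^{\ell_1}$-th root one does not divide by a high power of $(\oal - \al)$; instead one forms the divided differences $g_{i+1}(x) = (g_i(x) - g_i(\oal_i))/(x - \oal_i)$ iteratively, $r = p^{\ell_2-\ell_1}$ times, so that each step multiplies the bound by $C_v^{-(1-\rho)}$. The inequality $\tfrac{1}{1-\rho} = \tfrac{p^{\ell_2-\ell_1}-1}{(s_1-1)-(s_2-1)p^{\ell_2-\ell_1}} \le p^{\ell_2-\ell_1} - 1 < r$, which exploits that the denominator is a positive integer, is precisely what forces $|g_r(\oal_i)|_v \le C_v^{1-r(1-\rho)} < 1$, after which the product formula kills $g_r(\oal_i)$ and the degree count $\deg g_r = (s_1-1)-r > 0 $ (here $s_2 > 1$ enters) against $m - r + 1$ roots finishes the argument. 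Both the root extraction and the iteration are essential, and your proposal omits both.
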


\begin{proof}
Assume that $\lt \Prep(f; \al, \be) \rt = \infty$ and $S \ne \emptyset$.
Let $m := p^{\ell_2-\ell_1}s_2-1$ and $r := p^{\ell_2-\ell_1}$. By the assumption $p^{\ell_2}(s_2-1) < p^{\ell_1}(s_1-1)$, we have $\rho<1$ so
$$
\frac{1}{1-\rho} = \frac{p^{\ell_2-\ell_1}-1}{(s_1-1)-(s_2-1)p^{\ell_2-\ell_1}} \le p^{\ell_2-\ell_1}-1 < r.
$$
By Remark \ref{rmk3oal2}, $\lt S(f,\al) \rt = p^{\ell_2-\ell_1}s_2 = m+1$ so $S(f,\al)=\{ \al, \oal_1, \ldots, \oal_m \}$ for some $\oal_1, \ldots, \oal_m \in \ol{L}$. Define the polynomials $g_1, \ldots, g_r \in \ol{L}[x]$ as follow:
\begin{align*}
g_1(x) & = \frac{(f(\ep_1+x)-f(x)-(f(\ep_1+\al)-f(\al)))^{1/p^{\ell_1}}}{x-\al}, \\
g_{i+1}(x) & = \frac{g_i(x)-g_i(\oal_i)}{x-\oal_i} \;\; (1 \le i \le r-1).
\end{align*}
Since $\lt \oal_i-\al \rt_v \ge C_v^{1-\rho}$ by Remark \ref{rmk3oal}, we have
$$
\lt g_1(\oal_i) \rt_v = \lt \frac{(\ol{\ep_2}-\ep_2)^{1/p^{\ell_1}}}{\oal_i-\al} \rt_v \le C_v^{1-(1-\rho)}
$$
for every $1 \le i \le m$. (Here, $\ol{\ep_2}$ depends on the choice of $\oal_i$.) By Remark \ref{rmk3oal}, $\lt \oal_i-\oal_1 \rt_v \ge C_v^{1-\rho}$ so
$$
\lt g_2(\oal_i) \rt_v = \lt \frac{g_1(\oal_i)-g_1(\oal_1)}{\oal_i-\oal_1} \rt_v \le C_v^{1-2(1-\rho)}
$$
for every $2 \le i \le m$. Iterating this process, we deduce that
$$
\lt g_r(\oal_i) \rt_v \le C_v^{1-r(1-\rho)} < 1
$$
for every $v \in S$ and $r \le i \le m$. By the product formula, $g_r(\oal_i)=0$ for every $r \le i \le m$.

Now assume that $\ep_1 \ne 0$. Following \eqref{eq:thm42-a}, we have
\begin{align*}
(x-\al)g_1(x) & = c_1^{1/p^{\ell_1}} \sum_{j=1}^{s_1-1} \binom{s_1}{j} \ep_1^{s_1-j}(x^{j}-\al^j) + \Big( \sum_{j=1}^{s_2-1} \binom{s_2}{j} \ep_1^{s_2-j}(x^{j}-\al^j) \Big)^{p^{\ell_2-\ell_1}} \\
& = c_1^{1/p^{\ell_1}}s_1 \ep_1 x^{s_1-1} + (\text{lower terms})
\end{align*}
and $c_1^{1/p^{\ell_1}}s_1 \ep_1 \ne 0$ so $\deg(g_1)=s_1-2$. 
By the assumptions $\rho<1$ and $s_2 > 1$, we have
$$
(s_1-1)-r > (s_2-1)p^{\ell_2-\ell_1} - r = (s_2-2)p^{\ell_2-\ell_1} \ge 0
$$
so $\deg(g_r) = (s_1-1)-r > 0$. However, the polynomial $g_r$ has $m-r+1$ distinct roots $\oal_r, \oal_{r+1}, \ldots, \oal_{m} \in \ol{L}$ and $m-r+1=p^{\ell_2-\ell_1}s_2-r > (s_1-1)-r$, which is a contradiction. We conclude that $\ep_1=0$. The proof of the ``if'' part can be done as in the proof of Theorem \ref{thm4c}.
\end{proof}

During the proof of Theorem \ref{thm4_general}, the assumption $s_2>1$ was needed to guarantee $\deg(g_r)>0$. This is the reason that the case $s_2=1$ is treated separately.

\subsection{The case \texorpdfstring{$\rho=1$}{rho=1}} \label{ss:43}

In this section, we consider the case $\rho=1$. If $s_1 = s_2 = 1$, then $f$ is additive, and this case was covered in Section \ref{ss:add}. We therefore assume $s_2 > 1$ (and hence $s_1 > 1$).

\begin{thm} \label{thm4_rho1}
Under Setup \ref{setup_f}, assume that $s_1, s_2>1$ and $\rho=1$. If $\lt \Prep(f; \al, \be) \rt = \infty$ and not both $\al$ and $\be$ are contained in $\olfp$, then either $\ep_1=0$ or
$$
\ep_1^{p^{\ell_2}-p^{\ell_1}} = -\frac{c_1s_1}{c_2s_2}.
$$
\end{thm}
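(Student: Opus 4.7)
The plan is to exploit the condition $\rho = 1$ to make the two dominant $\al$-monomials in the expansion of $\ep_2$ land on the same exponent, and then force their combined coefficient to vanish by a valuation comparison against Lemma \ref{lem3b}.

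First I would show $\ep_1 \in \olfp^*$ under the assumption $\ep_1 \ne 0$. Lemma \ref{lem3:valep1} specialised to $\rho = 1$ yields, at each $v \in S$, either $|\ep_1|_v = 1$ or $|\ep_1|_v \le C_v^{1/p^{\ell_1}-s_1+1}$; the latter is at most $1$ since $s_1 \ge 2$. At $v \notin S$, $|\al|_v, |\be|_v \le 1$ together with $c_i \in \olfp$ give $|\ep_1|_v \le 1$. Hence $|\ep_1|_v \le 1$ at every place, and the product formula combined with property (iii) of $\Omega$ forces $|\ep_1|_v = 1$ everywhere, so $\ep_1 \in \olfp^*$.

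Second, I would expand $\ep_2 = f(\ep_1+\al) - f(\al)$ using the characteristic-$p$ identity $(\ep_1+\al)^{p^{\ell_i}} = \ep_1^{p^{\ell_i}} + \al^{p^{\ell_i}}$ to obtain
\[ \ep_2 = c_1 \sum_{k=1}^{s_1} \binom{s_1}{k} \ep_1^{p^{\ell_1} k} \al^{p^{\ell_1}(s_1-k)} + c_2 \sum_{k=1}^{s_2} \binom{s_2}{k} \ep_1^{p^{\ell_2} k} \al^{p^{\ell_2}(s_2-k)}. \]
The condition $\rho = 1$ is equivalent to $p^{\ell_1}(s_1-1) = p^{\ell_2}(s_2-1)$, so the two $k=1$ terms share the exponent $p^{\ell_1}(s_1-1)$, which is the largest $\al$-exponent occurring; the combined coefficient is
\[ c_1 s_1 \ep_1^{p^{\ell_1}} + c_2 s_2 \ep_1^{p^{\ell_2}} \in \olfp. \]
Every other monomial has a strictly smaller $\al$-exponent.

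Third, I would derive the equation. Since $\al$ is transcendental over $\olfp$ (because $\al \notin \olfp$) and every coefficient of this $\al$-polynomial lies in $\olfp$, the ultrametric inequality at $v \in S$ gives $|\ep_2|_v = C_v^{p^{\ell_1}(s_1-1)}$ whenever the top coefficient is nonzero (the $\olfp$-coefficients have $v$-value $1$ or $0$, and the $\al$-monomial norms are distinct across distinct exponents). Under the standing hypotheses $s_1, s_2 > 1$, $\rho = 1$, and $d_1 < d_2$, I would observe that $p^{\ell_1}(s_1-1) > 1$: the only way to achieve $p^{\ell_1}(s_1-1) = 1$ is $\ell_1 = 0, s_1 = 2$, but then $\rho = 1$ forces $\ell_2 = 0, s_2 = 2$, contradicting $d_1 < d_2$. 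Consequently $|\ep_2|_v > C_v$, violating Lemma \ref{lem3b}. The top coefficient must therefore vanish, and rearranging $c_1 s_1 \ep_1^{p^{\ell_1}} + c_2 s_2 \ep_1^{p^{\ell_2}} = 0$ yields $\ep_1^{p^{\ell_2}-p^{\ell_1}} = -c_1 s_1/(c_2 s_2)$.

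The main obstacle is conceptual rather than technical: recognizing that $\rho = 1$ collides the top two monomials of $\ep_2$ onto a single power of $\al$, making cancellation of their summed coefficient the natural mechanism that forces the algebraic relation. Once Step 1 has placed $\ep_1$ in $\olfp^*$, the remaining valuation analysis is essentially a direct degree comparison against Lemma \ref{lem3b}.
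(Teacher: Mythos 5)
Your proposal is correct and follows essentially the same route as the paper: apply Lemma \ref{lem3:valep1} at $\rho=1$ (together with the product formula) to place $\ep_1$ in $\olfp$, then expand $\ep_2$ to see that the two $k=1$ monomials collide at the exponent $m=p^{\ell_1}(s_1-1)=p^{\ell_2}(s_2-1)$, and finally use Lemma \ref{lem3b} to force $c_1s_1\ep_1^{p^{\ell_1}}+c_2s_2\ep_1^{p^{\ell_2}}=0$. The only cosmetic difference is that you assume $\ep_1\neq 0$ at the outset to land in $\olfp^*$ and give an explicit check that $m>1$, while the paper derives the dichotomy $\ep_1=0$ or $\ep_1^{p^{\ell_2}-p^{\ell_1}}=-c_1s_1/(c_2s_2)$ at the end and states $m>1$ without comment.
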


\begin{proof}
Assume that $\lt \Prep(f; \al, \be) \rt = \infty$ and $S \ne \emptyset$.
By Lemma \ref{lem3:valep1}, $|\ep_1|_v \le C_v^{1-\rho}=1$ for every $v \in S$ so $\ep_1 \in \olfp$. For $m = p^{\ell_1}(s_1-1)=p^{\ell_2}(s_2-1) > 1$,
\begin{align*}
\ep_2 & = c_1((\ep_1+\al)^{s_1}-\al^{s_1})^{p^{\ell_1}} + c_2((\ep_1+\al)^{s_2}-\al^{s_2})^{p^{\ell_2}} \\
& = (c_1s_1\ep_1^{p^{\ell_1}}+c_2s_2\ep_1^{p^{\ell_2}})\al^m + (\text{lower terms}).
\end{align*}
If $c_1s_1\ep_1^{p^{\ell_1}}+c_2s_2\ep_1^{p^{\ell_2}} \ne 0$, then $\lt \ep_2 \rt_v = C_v^m > C_v$, which is impossible by Lemma \ref{lem3b}. Hence 
$$
c_1s_1\ep_1^{p^{\ell_1}}+c_2s_2\ep_1^{p^{\ell_2}} = 0
$$
so either $\ep_1=0$ or $\ep_1^{p^{\ell_2}-p^{\ell_1}} = -\frac{c_1s_1}{c_2s_2}$.
\end{proof}

\begin{rmk} \label{rmk4_limit}
As a final remark, we present an example illustrating the limitation of our method. Let $p=3$ and $f(x)=x^4+x^6 \in \olfp[x]$. In this case, $s_2>1$ and $\rho=1=\frac{6-4}{3-1}=1$ so Theorem \ref{thm4_rho1} implies that either $\ep_1=0$ or $\ep_1^{3-1} = -\frac{4}{2} = 1 \in \ol{\bF}_3$. Hence $\ep_1 \in \{ -1, 0, 1\}$. 

However, our method does not allow us to rule out the cases $\ep_1=1$ and $\ep_1=-1$. If $\ep_1 = \pm 1$, then a direct computation shows that for every $\oal \in S(f,\al)$, we have $\ol{\ep_n} = \pm \oal - 1$ for all $n \ge 2$. In this case, the absolute values $\lt \ol{\ep_n} \rt_v$ ($n \ge 2$, $v \in S$) are always $C_v$, which provide no new information. This shows that the methods developed in this paper are not sufficient to establish $\ep_1=0$ when $\rho=1$.
\end{rmk}

\section{Colliding orbits problem} \label{Sec_col}

In this section, we prove Theorem \ref{thm1_colliding} on the colliding orbits problem. Throughout this section, we denote $C_f(\al_1,\al_2;\be)$ by $C(\al_1,\al_2;\be)$ for simplicity. 
Since our proof is based on the proof of \cite[Theorem 1.1]{AG25} in the case where $d$ is not a power of $p$, we briefly summarize it for the convenience of the reader.

For the forward direction, the argument in \cite{AG25} proceeds as follows. If $C(\al_1,\al_2;\be)$ is infinite, then there is an infinite sequence $(\lambda_k)_{k \ge 1}$ of distinct elements in $\ol{L}$ with the property that
$$
\lim_{k \to \infty} \widehat{h}_{f_{\lambda_k}}(\al_1) = \lim_{k \to \infty} \widehat{h}_{f_{\lambda_k}}(\al_2) = 0
$$
by \cite[Proposition 6.2]{AG25}. Theorem \ref{thm:sameheight} implies that  
$$
\widehat{h}_{v, \lambda}(\al_1) = \widehat{h}_{v, \lambda}(\al_2)
$$
for every $\lambda \in \ol{L}$ and $v \in \Omega_L$.
By \cite[Proposition 5.1]{Ghi24}, the equality of local canonical heights implies $\al_1^d = \al_2^d$. The converse direction follows from \cite[Theorem 2.4]{AG25}, which states that for every $\al, \be \in L$, there are infinitely many $\lambda \in \ol{L}$ such that $f_\lambda^m(\al) = \be$ for some $m \in \bN$.

\subsection{Forward direction of Theorem \ref{thm1_colliding}} \label{ss:51}

In this section, we prove the forward direction of Theorem \ref{thm1_colliding}.  
By Remark \ref{rmk:colliding-monic}, we may assume that $f$ is monic, so $f(x)=x^{d_2}+c_1x^{d_1}$. 
We begin with the following lemma, which describes the iterates $f_\lambda^n(x)$ as a polynomial in $\lambda$. It is an analogue of \cite[Lemmas 3.2 and 3.8]{AG25}, but the proof is more complicated since the polynomial $f$ is not a monomial in our setting.

\begin{lem}\label{lem5:polynomialproperty}
For each $n \in \bN$, 
\begin{equation} \label{eq5_iteration}
f_\lambda^n(x)=\sum_{i=0}^{d_2^{n-1}} c_{n,i}(x) \lambda^{d_2^{n-1}-i}
\end{equation}
for some polynomials $c_{n,i} \in \olfp[x]$. Furthermore, the polynomials $c_{n,i}(x)$ satisfy the following:
\begin{enumerate}
    \item[(a)] $c_{n,0}(x)=1$ and $c_{n,d_2^{n-1}}(x) = f^n(x)$;
    
    \item[(b)] $\deg(c_{n,i})\leq d_2\cdot i$ for $i=0,1,\ldots , d_2^{n-1}$;
    
    \item[(c)] Among the terms of $f_{\lambda}^n(x)$ whose coefficients $c_{n,i}(x)$ have positive degree in $x$, the term with the highest degree in $\lambda$ is $a_n(x) \lambda^{b_n}$ where
    $$
    a_1(x)=f(x), \; b_1=0, \; a_2(x)=s_1f(x)^{p^{\ell_1}}, \; b_2=p^{\ell_1}(s_1-1)
    $$
    and
    \begin{align*}
    b_{n+1} & = p^{\ell_2}(d_2^{n-1}(s_2-1)+b_n), \\
    a_{n+1}(x) & = s_2a_n(x)^{p^{\ell_2}}+u_n
    \end{align*}
    for some $u_n \in \olfp$ for every $n \ge 2$.
\end{enumerate}
\end{lem}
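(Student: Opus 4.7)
The plan is to prove the three parts simultaneously by induction on $n$. The base case $n=1$ is immediate from $f_\lambda(x)=\lambda + f(x)$, giving $a_1(x)=f(x)$ and $b_1=0$. For $n=2$, I expand
\begin{equation*}
f_\lambda^2(x) = (f(x)+\lambda)^{d_2} + c_1(f(x)+\lambda)^{d_1} + \lambda
\end{equation*}
using the characteristic-$p$ identity $(f(x)+\lambda)^{d_i} = \bigl((f(x)+\lambda)^{s_i}\bigr)^{p^{\ell_i}} = \sum_{k=0}^{s_i}\binom{s_i}{k} f(x)^{p^{\ell_i}(s_i-k)}\lambda^{p^{\ell_i}k}$. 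The top $\lambda$-term of positive $x$-degree from the $c_i x^{d_i}$ summand is $c_i s_i f(x)^{p^{\ell_i}}\lambda^{p^{\ell_i}(s_i-1)}$ (with $c_2=1$ since $f$ is monic). Under the hypothesis $p^{\ell_1}(s_1-1) > p^{\ell_2}(s_2-1)$ of Theorem~\ref{thm1_colliding}, the $i=1$ term dominates, giving $a_2(x)=c_1 s_1 f(x)^{p^{\ell_1}}$ and $b_2=p^{\ell_1}(s_1-1)$.

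For the inductive step $n\to n+1$, write $P_n(\lambda):=f_\lambda^n(x)$ and decompose $P_n=Q_n+R_n$, where $Q_n$ collects the monomials $c_{n,i}(x)\lambda^{d_2^{n-1}-i}$ of $\lambda$-degree strictly greater than $b_n$ (whose coefficients lie in $\olfp$ by the inductive form of (c)) and $R_n$ is the remainder, with leading term $a_n(x)\lambda^{b_n}$. Since $f_\lambda^{n+1}=P_n^{d_2}+c_1 P_n^{d_1}+\lambda$, the additivity of Frobenius yields $P_n^{p^{\ell_i}}=Q_n^{p^{\ell_i}}+R_n^{p^{\ell_i}}$, so that
\begin{equation*}
P_n^{d_i} = (Q_n^{p^{\ell_i}}+R_n^{p^{\ell_i}})^{s_i} = \sum_{k=0}^{s_i}\binom{s_i}{k}(Q_n^{p^{\ell_i}})^{s_i-k}(R_n^{p^{\ell_i}})^k.
\end{equation*}
Part (a) follows since $c_{n,0}=1$ forces the leading $\lambda$-coefficient of $P_n^{d_2}$ to be $1$, while evaluating at $\lambda=0$ gives $f^{n+1}(x)$. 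For (b), the coefficient of $\lambda^{d_2^n-k}$ in $P_n^{d_2}$ is a sum of products $c_{n,i_1}\cdots c_{n,i_{d_2}}$ with $i_1+\cdots+i_{d_2}=k$, hence of $x$-degree at most $d_2 k$ by induction; the $c_1 P_n^{d_1}+\lambda$ contribution is bounded analogously.

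For (c), terms of positive $x$-degree in $P_n^{d_i}$ require $k\ge 1$, and the top $\lambda$-exponent comes from $k=1$ (since $b_n<d_2^{n-1}$ makes $k\ge 2$ strictly smaller); its leading term is $s_i a_n(x)^{p^{\ell_i}}\lambda^{p^{\ell_i}((s_i-1)d_2^{n-1}+b_n)}$. Comparing $i=2$ to $i=1$, the $P_n^{d_2}$ exponent dominates precisely when $b_n>d_2^{n-1}(1-\rho)$, where $\rho=(d_2-d_1)/(p^{\ell_2}-p^{\ell_1})$. I strengthen the induction with this invariant. The base case $b_2>d_2(1-\rho)$ reduces algebraically to $(d_2-d_1)\,p^{\ell_2}(s_2-1) > \bigl(p^{\ell_1}(s_1-1)-p^{\ell_2}(s_2-1)\bigr)\,p^{\ell_1}$, which follows from the divisibility $p^{\ell_1}\mid (d_2-d_1)$ (a consequence of $\ell_1<\ell_2$ and $p\nmid s_1$) combined with $s_2>1$ (so $p^{\ell_2}(s_2-1)\ge p^{\ell_2}$) and $p^{\ell_1}(s_1-1)-p^{\ell_2}(s_2-1)<p^{\ell_2}-p^{\ell_1}$. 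The inductive step $b_n>d_2^{n-1}(1-\rho)\Rightarrow b_{n+1}>d_2^n(1-\rho)$ is a one-line computation using $s_2>1$ and $\rho>0$. Once dominance holds, the coefficient of $\lambda^{b_{n+1}}$ in $P_{n+1}$ is $s_2 a_n(x)^{p^{\ell_2}}$ from $P_n^{d_2}$ plus a constant $u_n\in\olfp$ absorbing any contribution from $c_1 P_n^{d_1}$ at that $\lambda$-exponent, while coefficients of $\lambda^j$ for $j>b_{n+1}$ are all constants in $x$, preserving the inductive form of (c).

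The main obstacle is the exponent comparison in (c): the strengthened invariant $b_n>d_2^{n-1}(1-\rho)$ couples the recursion for $b_{n+1}$ with the inequality needed to justify it, and the base case $n=2$ rests on the arithmetic observation $p^{\ell_1}\mid (d_2-d_1)$, which crucially uses both $\ell_1<\ell_2$ and $p\nmid s_1$.
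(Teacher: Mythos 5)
Your proposal is correct, and the overall structure (induction on $n$, bounding $\deg_x c_{n,i}$, isolating the highest $\lambda$-degree term of positive $x$-degree) matches the paper's. The one place you diverge is the key exponent comparison in part (c). The paper solves the recursion $d_2^n - b_{n+1} = p^{\ell_2}(d_2^{n-1}-b_n)$ in closed form to get $d_2^{n-1}-b_n = p^{(n-2)\ell_2}(d_2-d_1+p^{\ell_1})$, substitutes this into $b_{n+1}-b'_{n+1} = d_2^{n-1}(d_2-d_1) - (p^{\ell_2}-p^{\ell_1})(d_2^{n-1}-b_n)$, and then bounds each piece. You instead package the same inequality as a strengthened induction hypothesis $b_n > d_2^{n-1}(1-\rho)$, which is precisely the condition $b_{n+1} > b'_{n+1}$, and you verify that the recursion propagates this invariant (the inductive step reduces to $(s_2-1)\rho > 0$). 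Your organization is a bit cleaner in that it makes the natural threshold $d_2^{n-1}(1-\rho)$ visible and avoids solving the recursion; the paper's is more concrete. Both base cases at $n=2$ rest on the same arithmetic, namely $d_2-d_1 \geq p^{\ell_1}$ and $p^{\ell_2}(s_2-1) \geq p^{\ell_2}$, and you verified your reduction correctly.

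Two minor remarks. First, the divisibility $p^{\ell_1}\mid(d_2-d_1)$ follows from $\ell_1 \le \ell_2$ alone; the hypothesis $p\nmid s_1$ is not needed for that step — its role is elsewhere, to guarantee that $s_1 \ne 0$ in characteristic $p$, so that $a_2(x) = c_1 s_1 f(x)^{p^{\ell_1}}$ genuinely has positive $x$-degree. Second, your computed coefficient $a_2(x) = c_1 s_1 f(x)^{p^{\ell_1}}$ is the correct one (the lemma statement in the paper appears to drop the $c_1$, a harmless typo since only the shape of $a_n$, not the leading constant, is used downstream in Lemma \ref{lem:54}).
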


\begin{proof}
By induction on $n$, one can easily prove that \eqref{eq5_iteration} holds for every $n \in \bN$. When $n=1$, $f_{\lambda}(x)=f(x)+\lambda$ so $c_{n,0}(x)=1$. If $c_{n,0}(x)=1$ for a given $n \ge 1$, then
$$
f_\lambda^{n+1}(x)
= \left ( \lambda^{d_2^{n-1}} + \sum_{i=1}^{d_2^{n-1}} c_{n,i}(x) \lambda^{d_2^{n-1}-i} \right )^{d_2}+c_1\left ( \lambda^{d_2^{n-1}} + \sum_{i=1}^{d_2^{n-1}} c_{n,i}(x) \lambda^{d_2^{n-1}-i} \right )^{d_2}+\lambda
$$
so $c_{n+1, 0}(x) = 1$. By \eqref{eq5_iteration}, $c_{n,d_2^{n-1}}(x) = f_0^n(x) = f^n(x)$ so the assertion (a) holds. 

The proof of (b) also can be done by induction on $n$. The case $n=1$ is trivial since $c_{1,0}(x)=1$ and $c_{1,1}(x)=f(x)$. Assume that the assertion (b) holds for a given $n \ge 1$. Then
$$
f_\lambda^{n+1}(x)
= \left ( \sum_{i=0}^{d_2^{n-1}} c_{n,i}(x) \lambda^{d_2^{n-1}-i} \right )^{d_2}+c_1\left ( \sum_{i=0}^{d_2^{n-1}} c_{n,i}(x) \lambda^{d_2^{n-1}-i} \right )^{d_1}+\lambda
$$
so 
$$
c_{n+1, i}(x) = \sum_{j_1 + \cdots + j_{d_2} = i} c_{n,j_1}(x) \cdots c_{n,j_{d_2}}(x)
+ c_1 \sum_{j_1 + \cdots + j_{d_2} = i-d_2^{n-1}(d_2-d_1)} c_{n,j_1}(x) \cdots c_{n,j_{d_2}}(x)
+ \ep_{n,i},
$$
where $\ep_{n,i}=1$ if $i=d_2^n-1$ and $\ep_{n,i}=0$ otherwise. Then
$$
\deg (c_{n,j_1}(x) \cdots c_{n,j_{d_2}}(x)) \le d_2 \cdot (j_1 + \cdots +j_{d_2}) \le d_2 \cdot i
$$
whenever $j_1 + \cdots + j_{d_2} = i$ or $j_1 + \cdots + j_{d_2} = i-d_2^{n-1}(d_2-d_1)$ so $\deg c_{n+1,i}(x) \le d_2 \cdot i$.

Now we prove the assertion (c). The case $n=1$ is trivial. When $n=2$, we have
\begin{align*}
f_\lambda^2(x) 
& = (f(x)^{p^{\ell_2}}+\lambda^{p^{\ell_2}})^{s_2} +c_1 (f(x)^{p^{\ell_1}}+\lambda^{p^{\ell_1}})^{s_1} + \lambda \\
& = \sum_{j=0}^{s_2} \binom{s_2}{j} f(x)^{p^{\ell_2}j} \lambda^{p^{\ell_2} (s_2-j)} + \sum_{j=0}^{s_1} \binom{s_1}{j} f(x)^{p^{\ell_1}j} \lambda^{p^{\ell_1} (s_1-j)} + \lambda.
\end{align*}
By the condition 
$$
p^{\ell_2}(s_2-1) < p^{\ell_1}(s_1-1), 
$$
the highest $\lambda$-degree term with $\deg_x(c_{n,i})>0$ in the above formula is $s_1f(x)^{p^{\ell_1}} \lambda^{p^{\ell_1}(s_1-1)}$ so (c) is true when $n=2$.

Assume that the assertion (c) holds for some $n \ge 2$. Then the term of $f_{\lambda}^n(x)$ with the highest $\lambda$-degree is $\lambda^{d_2^{n-1}}$ by (a), while the term with the highest $\lambda$-degree among those having $\deg_x(c_{n,i})>0$ is $a_n(x) \lambda^{b_n}$.
The highest $\lambda$-degree term of 
$$
f_{\lambda}^{n}(x)^{d_2}
= (\lambda^{p^{\ell_2}d_2^{n-1}} + a_n(x)^{p^{\ell_2}} \lambda^{p^{\ell_2}b_n} + \cdots )^{s_2}
$$
whose coefficient has a positive degree in $x$ is
\begin{equation} \label{eq5_term1}
s_2(\lambda^{p^{\ell_2}d_2^{n-1}})^{s_2-1} \cdot a_n(x)^{p^{\ell_2}} \lambda^{p^{\ell_2}b_n}
= s_2a_n(x)^{p^{\ell_2}} \lambda^{b_{n+1}}
\end{equation}
and the highest $\lambda$-degree term of 
$$
f_{\lambda}^{n}(x)^{d_1}
= (\lambda^{p^{\ell_1}d_2^{n-1}} + a_n(x)^{p^{\ell_1}} \lambda^{p^{\ell_1}b_n} + \cdots )^{s_1}
$$
whose coefficient has a positive degree in $x$ is
\begin{equation*} \label{eq5_term2}
s_1(\lambda^{p^{\ell_1}d_2^{n-1}})^{s_1-1} \cdot a_n(x)^{p^{\ell_1}} \lambda^{p^{\ell_1}b_n}
= s_1 a_n(x)^{p^{\ell_1}} \lambda^{b'_{n+1}}
\end{equation*}
where $b'_{n+1} := p^{\ell_1}(d_2^{n-1}(s_1-1)+b_n)$. 

Now we prove the inequality $b'_{n+1} < b_{n+1}$. The relation
$$
b_{n+1} = p^{\ell_2}(d_2^{n-1}(s_2-1)+b_n) = d_2^n - p^{\ell_2}d_2^{n-1}+p^{\ell_2} b_n
$$
gives $d_2^n - b_{n+1} = p^{\ell_2}(d_2^{n-1}-b_n)$ so
$$
d_2^{n-1} - b_{n} = p^{(n-2)\ell_2}(d_2-b_2)=p^{(n-2)\ell_2}(d_2-d_1+p^{\ell_1}).
$$
This implies that
\begin{align*}
b_{n+1}-b'_{n+1}
& = d_2^{n-1}(d_2-d_1) -(p^{\ell_2}-p^{\ell_1})(d_2^{n-1}-b_n) \\
& = d_2^{n-1}(d_2-d_1) -(p^{\ell_2}-p^{\ell_1})p^{(n-2)\ell_2}(d_2-d_1+p^{\ell_1}) \\
& = (d_2^{n-1}-p^{(n-2)\ell_2}(p^{\ell_2}-p^{\ell_1}))(d_2-d_1) - p^{(n-2)\ell_2}(p^{\ell_2}-p^{\ell_1}) \cdot p^{\ell_1}.
\end{align*}
By the conditions $s_2>1$ and $n \ge 2$, we have 
$$
d_2^{n-1}-p^{(n-2)\ell_2}(p^{\ell_2}-p^{\ell_1}) > p^{(n-2)\ell_2}(p^{\ell_2}-p^{\ell_1})
$$
and
$$
d_2-d_1 = p^{\ell_1}(s_2p^{\ell_2-\ell_1}-s_1) \ge p^{\ell_1}
$$
so $b_{n+1}-b_{n+1}'>0$. 
This shows that the highest $\lambda$-degree term of $f_{\lambda}^{n+1}(x)$ satisfying $\deg_x(c_{n+1, i})>0$ has $\lambda$-degree $b_{n+1}$. 
Moreover, except for $s_2a_n(x)^{p^{\ell_2}}\lambda^{b_{n+1}}$ given by \eqref{eq5_term1}, every term of $f_{\lambda}^{n+1}(x)$ with $\lambda$-degree $b_{n+1}$ has $x$-degree $0$. Hence, the coefficient of $\lambda^{b_{n+1}}$ in $f_{\lambda}^{n+1}(x)$ is $s_2a_n(x)^{p^{\ell_2}} + u_n$ for some $u_n \in \olfp$.
\end{proof}

We present a few additional lemmas. Unlike the first one, the proofs of the remaining lemmas are similar to the corresponding results in \cite{AG25}. 

\begin{lem} \label{lem:53}
If $C(\al_1,\al_2;\be)$ is infinite, then $\al_1\in \ol{\fp(\al_2)}$ and $\al_2\in \ol{\fp(\al_1)}$.
\end{lem}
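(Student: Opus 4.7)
The plan is to combine Proposition 6.2 of \cite{AG25} with Theorem \ref{thm:sameheight}, then invoke \cite[Proposition 4.2]{GH24} on $\Prep(f;\al_1,\al_2)$ after a short case analysis. First I would use \cite[Proposition 6.2]{AG25} exactly as in the summary above: infiniteness of $C(\al_1,\al_2;\be)$ produces an infinite sequence $(\lambda_k)_{k\ge 1}$ with $\lim_{k\to\infty}\widehat{h}_{f_{\lambda_k}}(\al_1)=\lim_{k\to\infty}\widehat{h}_{f_{\lambda_k}}(\al_2)=0$, and Theorem \ref{thm:sameheight} then yields the key identity $\widehat{h}_{v,\lambda}(\al_1)=\widehat{h}_{v,\lambda}(\al_2)$ for every $v\in\Omega_L$ and every $\lambda\in\ol{L}$.

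The key trick is to apply this identity at the distinguished parameter $\lambda_0:=\al_1-f(\al_1)$. Since $f_{\lambda_0}(\al_1)=\al_1$, the forward orbit of $\al_1$ is trivial, so $\widehat{h}_{v,\lambda_0}(\al_1)=0$ for every $v$, and the identity forces $\widehat{h}_{v,\lambda_0}(\al_2)=0$ for every $v$ as well. Passing to the global height via its standard decomposition as a sum over places gives $\widehat{h}_{f_{\lambda_0}}(\al_2)=0$, so $\al_2$ is preperiodic under $f_{\lambda_0}$ by the criterion recalled in Section \ref{ss:21}. In particular, $\lambda_0\in\Prep(f;\al_1,\al_2)$, so this set is nonempty.

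I would then split into two cases. If $\al_1\in\olfp$, then $\lambda_0\in\olfp$ and $f_{\lambda_0}\in\olfp[x]$; the preperiodicity of $\al_2$ under $f_{\lambda_0}$ gives integers $m\ge 0$ and $k\ge 1$ with $f_{\lambda_0}^{m+k}(\al_2)=f_{\lambda_0}^{m}(\al_2)$, and the polynomial $f_{\lambda_0}^{m+k}(x)-f_{\lambda_0}^{m}(x)\in\olfp[x]$ is nonzero since its leading term has $x$-degree $d_2^{m+k}>d_2^{m}$. Hence $\al_2\in\olfp$, and both conclusions of the lemma are trivial. If instead $\al_1\notin\olfp$, running the same argument with the roles of $\al_1$ and $\al_2$ swapped (using $\lambda_0':=\al_2-f(\al_2)$) rules out $\al_2\in\olfp$, so both $\al_1$ and $\al_2$ are transcendental over $\fp$. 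Applying \cite[Proposition 4.2]{GH24} to the nonempty set $\Prep(f;\al_1,\al_2)$ then yields $\mr{trdeg}_{\fp}\fp(\al_1,\al_2)\le 1$; since each $\al_i$ is individually transcendental over $\fp$, each of $\{\al_1\}$ and $\{\al_2\}$ serves as a transcendence basis of $\fp(\al_1,\al_2)$, making the other element algebraic over $\fp$ adjoined with it, which is exactly the desired conclusion.

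I expect the only technical point requiring care to be the passage from the vanishing of all local canonical heights of $\al_2$ to the vanishing of the global canonical height $\widehat{h}_{f_{\lambda_0}}(\al_2)$, which relies on the standard local-global decomposition of the canonical height and is implicit in the proof of Theorem \ref{thm:sameheight}. Apart from this bookkeeping, the argument is a short reduction once the parameter $\lambda_0$ is chosen.
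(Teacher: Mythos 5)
Your approach is genuinely different from the paper's, but it has a circularity problem. You want to invoke \cite[Proposition 6.2]{AG25} (or rather its adaptation using Lemma~\ref{lem:58}) and Theorem~\ref{thm:sameheight} to produce the sequence $(\lambda_k)$ and equate local canonical heights. But the entire height machinery of Section~\ref{Sec2}---the Weil height $h$, the set $\Omega$ of places with the product formula, the canonical heights $\widehat{h}_{f_\lambda}$ and $\widehat{h}_{v,\lambda}$, and hence Theorem~\ref{thm:sameheight}---is set up under the hypothesis that the ambient field is a finite extension of $L_0=\olfp(t,t^{1/p},\dots)$, i.e.\ that $\al_1,\al_2,\be$ all lie in a field of transcendence degree $1$ over $\olfp$. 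In the statement of Theorem~\ref{thm1_colliding} (and hence of Lemma~\ref{lem:53}), $L$ is an arbitrary field of characteristic $p$: a priori $\mr{trdeg}_{\fp}\fp(\al_1,\al_2,\be)$ could be $2$ or $3$, in which case the heights you write down are simply not defined in the sense used by the paper. Lemma~\ref{lem:53} together with Lemma~\ref{lem:54} is exactly what the authors use to \emph{establish} that $\al_1,\al_2,\be$ sit in a common finite extension of $L_0$ (see the proof of the forward direction: ``The condition $\be\in\ol{\fp(\al_1)}=\ol{\fp(\al_2)}$ ensures that there is a finite extension $K$ of $L_0$ containing $\al_1,\al_2,\be$. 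Following the proof of \cite[Proposition 6.2]{AG25}\dots''). So the tools you deploy to prove Lemma~\ref{lem:53} presuppose the conclusion of Lemma~\ref{lem:53}.

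The paper sidesteps this by working with the (possibly transcendence-degree-$2$) function field $K=L_1(\al_1,\be)$ over $L_1:=\ol{\fp(\al_2)}$: if $\al_1\notin L_1$, one picks a place $v$ of $K$ with $|\al_1|_v>1$, and a direct valuation estimate using the coefficient bounds of Lemma~\ref{lem5:polynomialproperty}(a),(b) applied to $f_\lambda^m(\al_1)=\be=f_\lambda^n(\al_2)$ (with $m,n$ large) yields a contradiction. This uses no global heights and in particular no product formula, so it works without any prior transcendence-degree constraint. Your case analysis at the end (splitting on $\al_1\in\olfp$, invoking $\mr{trdeg}_{\fp}\fp(\al_1,\al_2)\le 1$) would be fine \emph{if} you had a legitimate way to show $\Prep(f;\al_1,\al_2)\ne\emptyset$, but that is where the gap lies. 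To repair the argument you would need either to justify that \cite[Proposition 6.2]{AG25} and Theorem~\ref{thm:sameheight} apply in the needed generality (they do not, as stated), or to replace them with an absolute-value argument like the paper's.
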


\begin{proof}
The proof is identical to that of \cite[Lemma 3.6]{AG25}, so we only give a brief sketch here. Let $L_1 := \ol{\fp(\al_2)}$ and $K := L_1(\al, \be)$. Assume that $\al_1 \notin L_1$, so $K/L_1$ is a function field either of transcendence degree $1$ or $2$. 
By the assumption $\al_1 \notin L_1$, there is a place $v$ of $K$ such that $\lt \al_1 \rt_v > 1$.

For $\lambda \in C(\al_1,\al_2;\be)$, there exist $m,n \in \bN$ such that $f_{\lambda}^m(\al_1) = f_{\lambda}^n(\al_2) =\be$. 
Since the set $C(\al_1,\al_2;\be)$ is infinite and \cite[Lemma 3.3]{AG25} holds for any polynomial $f$ of degree greater than $1$, we may assume that both $m$ and $n$ are sufficiently large so that $\lt \be \rt_v < \lt \al_1 \rt_v^{d_2^{\min(m,n)}}$.

By Lemma \ref{lem5:polynomialproperty}(a), 
$$
\lt c_{m,d_2^{m-1}} (\al_1) \rt_v= \lt f^m(\al_1) \rt_v = |\al_1|_v^{d_2^m} > \lt \be \rt_v = \lt f_{\lambda}^m(\al) \rt_v
$$
so 
$$
\lt c_{m,i}(\al_1) \lambda^{d_2^{m-1}-i}\rt_v\ge |\al_1|_v^{d_2^m}
$$
for some $0 \le i \le d_2^{m-1}-1$. By Lemma \ref{lem5:polynomialproperty}(b), 
$$
|c_{m,i}(\al_1)|_v\le |\al_1|_v^{d_2\cdot i}
$$
for each $i$ so 
$\lt \lambda \rt_v \ge \lt \al_1 \rt_v^{d_2}$. Now we have
$$
\lt \be \rt_v 
= \lt f_{\lambda}^n(\al_2) \rt_v 
\ge |\lambda|_v^{d_2^{n-1}} \ge |\al_1|_v^{d_2^n},
$$
which contradicts the assumption $\lt \be \rt_v < \lt \al_1 \rt_v^{d_2^{\min(m,n)}}$. We conclude that $\al_1 \in L_1$.
\end{proof}

Using the above result, we prove the following lemma analogous to \cite[Proposition 3.7]{AG25}. Note that if $C(\al_1,\al_2;\be)$ is infinite, then $\ol{\fp(\al_1)}=\ol{\fp(\al_2)}$ by Lemma \ref{lem:53}. 

\begin{lem}\label{lem:54} 
Let $\al_1, \al_2, \be \in L$ and assume that $C(\al_1,\al_2;\be)$ is infinite. Then at least one of the following statements holds:
\begin{enumerate}
    \item[(a)] $f(\al_1)=f(\al_2)$,
    \item[(b)] $\be \in L_1 := \ol{\fp(\al_1)}=\ol{\fp(\al_2)}$.
\end{enumerate}
\end{lem}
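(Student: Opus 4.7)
The plan is to reduce to Theorem \ref{thm_main4} via the local height machinery and then settle the residual case $\al_1,\al_2\in\olfp$ with a direct valuation and polynomial-identity argument. First, since $C(\al_1,\al_2;\be)$ is infinite, \cite[Proposition 6.2]{AG25} produces an infinite sequence $(\lambda_k)$ in $C(\al_1,\al_2;\be)$ with $\widehat{h}_{f_{\lambda_k}}(\al_i)\to 0$ for $i=1,2$, and Theorem \ref{thm:sameheight} then yields $\widehat{h}_{v,\lambda}(\al_1)=\widehat{h}_{v,\lambda}(\al_2)$ for every $v\in\Omega_L$ and every $\lambda\in\ol L$. By \cite[Proposition 2.3]{GH24}, there are infinitely many $\lambda$ making $\al_1$ preperiodic under $f_\lambda$; for each such $\lambda$, $\widehat{h}_{v,\lambda}(\al_1)=0$ for all $v$, so the height equality forces $\widehat{h}_{v,\lambda}(\al_2)=0$ for all $v$, and hence $\al_2$ is preperiodic under $f_\lambda$ as well. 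Therefore $\Prep(f;\al_1,\al_2)$ is infinite, and Theorem \ref{thm_main4} gives either $f(\al_1)=f(\al_2)$ (which is conclusion (a)) or $\al_1,\al_2\in\olfp$; in the latter case $L_1=\olfp$, so it remains to prove $\be\in\olfp$.

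So I assume $\al_1,\al_2\in\olfp$ and, without loss of generality, $f(\al_1)\ne f(\al_2)$, and show $\be\in\olfp$. Suppose for contradiction that $\be\notin\olfp$; property (iii) of $\Omega_L$ supplies $v\in\Omega_L$ with $|\be|_v>1$. For any $\lambda\in C(\al_1,\al_2;\be)$ pick $m,n\ge 1$ with $f_\lambda^m(\al_1)=f_\lambda^n(\al_2)=\be$. By Lemma \ref{lem5:polynomialproperty}, $f_\lambda^k(\al_i)=\lambda^{d_2^{k-1}}+\sum_{j\ge 1}c_{k,j}(\al_i)\lambda^{d_2^{k-1}-j}$, where every $c_{k,j}(\al_i)\in\olfp$ and so $|c_{k,j}(\al_i)|_v\le 1$. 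If $|\lambda|_v\le 1$ then $|f_\lambda^k(\al_i)|_v\le 1<|\be|_v$, which is impossible; therefore $|\lambda|_v>1$, the leading term dominates, and $|\be|_v=|\lambda|_v^{d_2^{m-1}}=|\lambda|_v^{d_2^{n-1}}$, forcing $m=n$.

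Set $P_m(\lambda):=f_\lambda^m(\al_1)-f_\lambda^m(\al_2)\in\olfp[\lambda]$, so that $\lambda\in C(\al_1,\al_2;\be)$ forces $P_m(\lambda)=0$. I claim $P_m$ is a nonzero polynomial for every $m\ge 1$: indeed $P_1=f(\al_1)-f(\al_2)\in\olfp^*$ by assumption, and writing $A=f_\lambda^m(\al_1)$ and $B=f_\lambda^m(\al_2)$ gives
$$
P_{m+1}=f(A)-f(B)=P_m\cdot\Bigl(\sum_{i=0}^{d_2-1}A^{d_2-1-i}B^i+c_1\sum_{j=0}^{d_1-1}A^{d_1-1-j}B^j\Bigr),
$$
whose second factor is nonzero in $\olfp[\lambda]$ because its leading term in $\lambda$ is $\lambda^{(d_2-1)d_2^{m-1}}$. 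Thus $\lambda$ is algebraic over $\olfp$, so $\lambda\in\olfp$, and then property (iii) combined with the product formula yields $|\lambda|_v=1$, contradicting $|\lambda|_v>1$. The main obstacle in this plan is the inductive verification that $P_m\ne 0$ in $\olfp[\lambda]$; the remaining steps are bookkeeping combining the height machinery of Section \ref{Sec2} with the valuation estimates from Lemma \ref{lem5:polynomialproperty}.
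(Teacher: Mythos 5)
Your proof has two genuine gaps, and it takes a route opposite to the paper's. The first gap is circularity in invoking the height machinery at the outset. Theorem~\ref{thm:sameheight} and Lemmas~\ref{lem:57}--\ref{lem:58} are stated for points in a finite extension $K$ of $L_0 = \olfp(t, t^{1/p}, \ldots)$, so to produce the sequence $(\lambda_k)$ with $\widehat{h}_{f_{\lambda_k}}(\al_i) \to 0$ one needs $\al_1, \al_2, \be$ all in such a $K$, i.e.\ one needs $\ol{\fp(\al_1,\al_2,\be)}$ to have transcendence degree $1$ over $\fp$. But whether $\be$ is algebraic over $L_1 = \ol{\fp(\al_1)}$ is precisely alternative~(b) of the lemma you are trying to prove; if $\be \notin L_1$, the bound in Lemma~\ref{lem:58} involves $h(\be)$, which is not defined in this framework. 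This is why the paper establishes Lemma~\ref{lem:54} \emph{before} appealing to the height machinery, via a self-contained valuation argument: assuming $\be \notin L_1$, one works with the place at infinity on $\mathbb{P}^1_{L_1}$ for $K_1=L_1(\be)$ (so $|\al_i|_\infty \le 1$ while $|\be|_\infty > 1$), deduces $|\lambda|_\infty > 1$ and hence $m=n$, uses the transcendence of $\lambda$ over $L_1$ to equate $f_\lambda^m(\al_1)$ and $f_\lambda^m(\al_2)$ coefficientwise in $\lambda$, and then applies Lemma~\ref{lem5:polynomialproperty}(c) to conclude $f(\al_1)=f(\al_2)$.

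The second gap is in your residual case $\al_1,\al_2\in\olfp$. You factor $P_{m+1} = P_m \cdot F$ with $F = \sum_{i=0}^{d_2-1} A^{d_2-1-i}B^i + c_1 \sum_{j=0}^{d_1-1} A^{d_1-1-j}B^j$ and assert $F$ has leading term $\lambda^{(d_2-1)d_2^{m-1}}$. However, each of the $d_2$ summands of the first sum contributes the monic monomial $\lambda^{(d_2-1)d_2^{m-1}}$, so the coefficient of that power of $\lambda$ is $d_2$, while the second sum has strictly smaller $\lambda$-degree. Under the standing hypotheses of Section~\ref{Sec_col} one has $\ell_2 > \ell_1 \ge 0$, hence $p \mid d_2$, and this coefficient vanishes in $\olfp$. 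The claimed leading term therefore cancels, and the argument does not show $F\ne 0$. (The conclusion $P_m \ne 0$ is in fact true, but the clean way to see it is the paper's: if $P_m=0$ then all $\lambda$-coefficients agree, in particular $a_m(\al_1)=a_m(\al_2)$, and Lemma~\ref{lem5:polynomialproperty}(c) propagates this down to $f(\al_1)=f(\al_2)$, contradicting the assumption.)
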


\begin{proof}
Assume that (b) is not true, so $\be \notin L_1$. Then
$K_1=L_1(\be)$ is a rational function field over $L_1$ of transcendence degree $1$. We regard $K_1$ as the function field of $\mathbb{P}_{L_1}^1$. 
Let $|\cdot |_\infty$ denote the absolute value on the function field $K_1/L_1$ associated with the place at infinity on $\mathbb{P}_{L_1}^1$ and fix an extension of $|\cdot |_\infty$ to $\ol{K_1}$. 

Let $\lambda \in C(\al_1,\al_2;\be)$ and $f_\lambda^m(\al_1)=f_\lambda^n(\al_2)=\be$ for some $m,n \in \bN$. By the equation $|f^m_{\lambda}(\al_1)|_\infty 
= |\be|_\infty > 1$ and $|\al_1|_\infty \le 1$, we have
$|\lambda|_\infty>1$. Then
$$
|\lambda|_\infty^{d_2^{m-1}} 
= |f^m_{\lambda}(\al_1)|_\infty 
= |f^n_{\lambda}(\al_2)|_\infty
= |\lambda|_\infty^{d_2^{n-1}}
$$
by \eqref{eq5_iteration} and Lemma \ref{lem5:polynomialproperty}(a). Since $|\lambda|_\infty>1$, the equality $|\lambda|_\infty^{d_2^{m-1}}=|\lambda|_\infty^{d_2^{n-1}}$ implies that $m=n$.

Now we have $f_\lambda^m(\al_1)=f_\lambda^m(\al_2)=\be$. The case $m=1$ is clear, so we may assume that $m \ge 2$.
Since $L_1$ is algebraically closed and $|\lambda|_\infty>1$, $\lambda$ is transcendental over $L_1$. Hence, the equality $f_\lambda^m(\al_1)=f_\lambda^m(\al_2)$ implies that the coefficients of $f_\lambda^m(\al_1)$ and $f_\lambda^m(\al_2)$ (as polynomials in $\lambda$) are equal. 
By Lemma \ref{lem5:polynomialproperty}(c), 
among the terms of $f_{\lambda}^m(x)$ whose coefficients $c_{m,i}(x)$ have positive degrees in $x$, the term with the highest degree in $\lambda$ is $a_m(x) \lambda^{b_m}$ where $a_m(x)$ and $b_m$ are given as in Lemma \ref{lem5:polynomialproperty}. Since $a_2(x)=s_1f(x)^{p^{\ell_1}}$ and
$$
a_{n+1}(x)=s_2a_n(x)^{p^{\ell_2}}+u_n \;\; (u_n \in \olfp)
$$
for each $n \ge 2$, an induction on $m$ shows that $a_m(\al_1) = a_m(\al_2)$ implies $f(\al_1) = f(\al_2)$.
\end{proof}

Next we provide two lemmas on the canonical heights of the polynomial $f_{\lambda}$.

\begin{lem}\label{lem:57}
Let $K$ be a finite extension field of $L_0:= \olfp(t,t^{1/p},t^{1/p^2},\ldots )$, $\al \in K$ and $\lambda \in \ol{K}$. Then
$$
\frac{h(\lambda)}{d_2} - h(\al)\le \widehat{h}_{f_\lambda}(\al)\le \frac{h(\lambda)}{d_2}+h(\al).
$$
\end{lem}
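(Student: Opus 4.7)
The plan is to bound the local absolute values of the iterates $x_n := f_\lambda^n(\al)$ by two dual inductive estimates and then sum over places. The essential input is that $c_1\in\ol{\bF}_p^*$ gives $|c_1|_v\le 1$ at every place $v$ of (a suitable finite extension of) $L$, while $d_1<d_2$ gives $|y|_v^{d_1}\le\max(|y|_v^{d_2},1)$; the non-archimedean triangle inequality then yields the basic estimate
$$|f_\lambda(y)|_v\le\max\bigl(|y|_v^{d_2},\,|\lambda|_v,\,1\bigr)$$
at every place $v$.

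For the upper bound, I would prove by induction on $n\ge 1$ that
$$|x_n|_v\le \max\bigl(|\al|_v^{d_2^n},\,|\lambda|_v^{d_2^{n-1}},\,1\bigr),$$
the inductive step using $|\lambda|_v\le \max(|\lambda|_v^{d_2^n},1)$ (valid since $d_2^n\ge 1$) to absorb the extra $|\lambda|_v$ produced by one application of $f_\lambda$. Taking $\log^+$ and summing over all places and Galois conjugates, together with the crude inequality $\max(A,B)\le A+B$, gives $h(x_n)\le d_2^n h(\al)+d_2^{n-1}h(\lambda)$; dividing by $d_2^n$ and passing to the limit produces $\widehat h_{f_\lambda}(\al)\le h(\al)+h(\lambda)/d_2$.

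For the lower bound, I would dually prove by induction on $n\ge 1$ that
$$|\lambda|_v^{d_2^{n-1}}\le\max\bigl(|x_n|_v,\,|\al|_v^{d_2^n},\,1\bigr),$$
the base case being immediate from $\lambda=x_1-f(\al)$ and $|f(\al)|_v\le\max(|\al|_v^{d_2},1)$. For the inductive step I would raise the previous bound to the $d_2$-th power, then use the identity $x_n^{d_2}=x_{n+1}-c_1 x_n^{d_1}-\lambda$ together with $|x_n|_v^{d_1}\le\max(|x_n|_v^{d_2},1)$ to deduce $|x_n|_v^{d_2}\le\max(|x_{n+1}|_v,|\lambda|_v,1)$, and absorb the residual $|\lambda|_v$ into the max exactly as in the upper bound. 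Taking $\log^+$, summing, and dividing by $d_2^n$ yields $h(\lambda)/d_2\le h(x_n)/d_2^n+h(\al)$, which gives the lower bound $\widehat h_{f_\lambda}(\al)\ge h(\lambda)/d_2 - h(\al)$ in the limit.

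The main subtlety is the lower-bound induction: a naive iteration of $|\lambda|_v\le\max(|f_\lambda(y)|_v,|y|_v^{d_2},1)$ only produces the weaker bound $|\lambda|_v^{d_2^{n-1}}\le\max(|x_n|_v^{d_2^{n-1}},|\al|_v^{d_2^n},1)$, which after summing yields a constant inferior to the claimed $1/d_2$. The trick is to invoke the explicit equation defining $x_{n+1}$ so as to control $|x_n|_v^{d_2}$ directly in terms of $|x_{n+1}|_v$ and $|\lambda|_v$; this eliminates the excess $d_2^{n-1}$-th power on $|x_n|_v$ and restores the sharp factor of $d_2$ in the denominator of the final inequality.
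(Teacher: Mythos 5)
Your overall strategy — inductive local bounds on $|x_n|_v$ and on $|\lambda|_v^{d_2^{n-1}}$ with $x_n=f_\lambda^n(\alpha)$, then $\log^+$, sum over places, divide by $d_2^n$, and pass to the limit — is sound and gives exactly the stated inequality with the sharp constant $1/d_2$. The paper itself only cites the corresponding statements in the references (the proof is ``identical to [AG25, Proposition 5.2]'' with one lemma swapped), so a precise comparison is not possible from the text, but your route is the expected one: it is elementary, works place-by-place, and avoids introducing local canonical heights. Your upper-bound induction is correct as written, and the base case of the lower-bound induction is also correct.

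There is, however, a genuine (though easily repaired) gap in the lower-bound inductive step. You claim to deduce
$|x_n|_v^{d_2}\le\max(|x_{n+1}|_v,|\lambda|_v,1)$
from the identity $x_n^{d_2}=x_{n+1}-c_1x_n^{d_1}-\lambda$ ``together with $|x_n|_v^{d_1}\le\max(|x_n|_v^{d_2},1)$.'' Substituting that estimate into the ultrametric bound only yields
$|x_n|_v^{d_2}\le\max(|x_{n+1}|_v,|x_n|_v^{d_2},|\lambda|_v,1)$,
which is a tautology and does not let you drop the $|x_n|_v^{d_2}$ on the right. The correct argument uses \emph{strict} dominance rather than substitution: if $|x_n|_v\le 1$ the claim is trivial, and if $|x_n|_v>1$ then $|c_1x_n^{d_1}|_v\le|x_n|_v^{d_1}<|x_n|_v^{d_2}$ since $d_1<d_2$, so the strong-triangle \emph{equality} gives $|x_n^{d_2}+c_1x_n^{d_1}|_v=|x_n|_v^{d_2}$, hence $|x_n|_v^{d_2}=|x_{n+1}-\lambda|_v\le\max(|x_{n+1}|_v,|\lambda|_v)$. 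The same issue arises when you say to ``absorb the residual $|\lambda|_v$ into the max exactly as in the upper bound'': in the upper bound you substitute $|\lambda|_v\le\max(|\lambda|_v^{d_2^n},1)$ into the right-hand side, which is fine; in the lower bound the term $|\lambda|_v^{d_2^n}$ sits on the left, so the absorption again requires a dominance argument — if the right-hand maximum were attained at $|\lambda|_v$ with $|\lambda|_v>1$, then $|\lambda|_v^{d_2^n}\le|\lambda|_v<|\lambda|_v^{d_2^n}$, a contradiction. With these two corrections the induction closes and your proof is complete.
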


\begin{proof}
The proof is identical to that of \cite[Proposition 5.2]{AG25}, except that \cite[Lemma 5.1]{AG25} is replaced with \cite[Lemma 2.6]{GH24}. 
\end{proof}

\begin{lem} \label{lem:58}
Let $\al,\be \in K$, $\lambda \in \ol{K}$ and $m \in \bN$. If $f_\lambda^m(\al)=\be$, then
$$
\widehat{h}_{f_\lambda}(\al)\le \frac{2h(\al)+2h(\be)}{d_2^m}.
$$
\end{lem}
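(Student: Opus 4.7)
The plan is to exploit the functional equation $\widehat{h}_{f_\lambda}(f_\lambda(x)) = d_2\,\widehat{h}_{f_\lambda}(x)$ (the defining property of the canonical height), together with the two-sided comparison between canonical and Weil heights provided by Lemma \ref{lem:57}. Since $f_\lambda^m(\al) = \be$, iterating the functional equation gives
$$
\widehat{h}_{f_\lambda}(\al) = \frac{\widehat{h}_{f_\lambda}(\be)}{d_2^m}.
$$
Thus the main task is to bound $\widehat{h}_{f_\lambda}(\be)$ from above in terms of $h(\al)$ and $h(\be)$, and the key point is that we cannot directly use $h(\lambda)$ because $\lambda$ is not controlled a priori; we must bound $h(\lambda)$ using the hypothesis as well.

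Concretely, I would apply Lemma \ref{lem:57} twice. First, the lower bound $\widehat{h}_{f_\lambda}(\al) \ge h(\lambda)/d_2 - h(\al)$ combined with the displayed functional equation yields
$$
\frac{h(\lambda)}{d_2} - h(\al) \;\le\; \widehat{h}_{f_\lambda}(\al) \;=\; \frac{\widehat{h}_{f_\lambda}(\be)}{d_2^m}.
$$
Second, the upper bound $\widehat{h}_{f_\lambda}(\be) \le h(\lambda)/d_2 + h(\be)$ plugs into the right-hand side to give
$$
\frac{h(\lambda)}{d_2} - h(\al) \;\le\; \frac{1}{d_2^m}\left(\frac{h(\lambda)}{d_2} + h(\be)\right).
$$
Rearranging this linear inequality in $h(\lambda)/d_2$ (using $d_2^m \ge 2 > 1$) produces the upper bound
$$
\frac{h(\lambda)}{d_2} \;\le\; \frac{d_2^m\, h(\al) + h(\be)}{d_2^m - 1}.
$$

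Substituting this back into $\widehat{h}_{f_\lambda}(\al) \le (h(\lambda)/d_2 + h(\be))/d_2^m$ and simplifying, the numerators collapse nicely to yield
$$
\widehat{h}_{f_\lambda}(\al) \;\le\; \frac{h(\al) + h(\be)}{d_2^m - 1}.
$$
Finally, since $d_2 \ge 2$ and hence $d_2^m - 1 \ge d_2^m/2$, we conclude $\widehat{h}_{f_\lambda}(\al) \le (2h(\al) + 2h(\be))/d_2^m$, as desired. There is no real obstacle here; the proof is a two-application bookkeeping argument with Lemma \ref{lem:57} plus the height functional equation, and the only mild subtlety is the circular-looking step where $h(\lambda)$ is eliminated by using both inequalities of Lemma \ref{lem:57} simultaneously.
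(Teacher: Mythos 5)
Your proof is correct and follows essentially the same route as the cited source: the paper defers to the proof of \cite[Proposition 5.3]{AG25}, which uses exactly this combination of the functional equation $\widehat{h}_{f_\lambda}(f_\lambda(x)) = d_2\,\widehat{h}_{f_\lambda}(x)$ with the two-sided comparison of Lemma \ref{lem:57} applied to both $\al$ and $\be$ to eliminate $h(\lambda)$.
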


\begin{proof}
See the proof of \cite[Proposition 5.3]{AG25}. 
\end{proof}

\begin{proof}[Proof of the forward direction of Theorem \ref{thm1_colliding}]

Assume that $C(\al_1,\al_2;\be)$ is infinite and $f(\al_1) \ne f(\al_2)$. By Lemma \ref{lem:54}, $\be \in \ol{\fp(\al_1)}=\ol{\fp(\al_2)}$. If $\al_1 \in \olfp$, then $\al_1, \al_2, \be \in \olfp$ which is impossible by the assumption. Hence $\al_1 \notin \olfp$. 
Since $\al_1$ is transcendental over $\olfp$, we may take $t := \al_1$ in the field $L_0$. 

The condition $\be \in \ol{\fp(\al_1)} =\ol{\fp(\al_2)}$ ensures that there is a finite extension $K$ of $L_0$ containing $\al_1, \al_2, \be$. 
Following the proof of \cite[Proposition 6.2]{AG25} (using Lemma \ref{lem:58}), one can prove that there exists an infinite sequence $(\lambda_k)_{k=1}$ of distinct elements of $\ol{K}$ such that 
$$
\lim_{k \to \infty}\widehat{h}_{f_{\lambda_k}}(\al_1)
= \lim_{k \to \infty}\widehat{h}_{f_{\lambda_k}}(\al_2)=0.
$$
By Theorem \ref{thm:sameheight}, we have
$$
\widehat{h}_{v, \lambda}(\al_1) = \widehat{h}_{v, \lambda}(\al_2)
$$
for every $v \in \Omega_L$ and $\lambda \in \ol{L}$. By the proof of Theorem \ref{thm_main4}, this implies $f(\al_1)=f(\al_2)$. We conclude that the forward direction of Theorem \ref{thm1_colliding} holds.
\end{proof}

\subsection{Converse direction of Theorem \ref{thm1_colliding}} \label{ss:52}

The following theorem is an analogue of \cite[Theorem 2.4]{AG25}. It immediately implies that if $f(\al_1)=f(\al_2)$, then $C(\al_1,\al_2;\be)$ is infinite.

\begin{thm}\label{thm:57}
Let $L$ be a field of characteristic $p$ and $\al, \be \in L$. Under Setup \ref{setup_f}, there are infinitely many $\lambda \in \ol{L}$ such that $f_\lambda^m(\al)=\be$ for some $m \in \bN$.
\end{thm}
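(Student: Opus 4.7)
The plan is to reduce the problem to a polynomial-in-$\lambda$ question via Lemma \ref{lem5:polynomialproperty} and then argue by contradiction that infinitely many distinct roots arise as $m$ varies. After replacing $L$ by the finitely generated extension $\olfp(\al,\be)$, for each $m\ge 1$ Lemma \ref{lem5:polynomialproperty}(a) shows that
\[
P_m(\lambda) := f_\lambda^m(\al) - \be \in L[\lambda]
\]
is monic of degree $d_2^{m-1}$ in $\lambda$, so the root set $\Lambda_m := \{\lambda \in \ol{L} : P_m(\lambda)=0\}$ is nonempty. The target is $|\bigcup_{m \ge 1} \Lambda_m| = \infty$.

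Suppose for contradiction that $\bigcup_m \Lambda_m = \{\lambda_1,\ldots,\lambda_k\}$ is finite; then each $P_m$ factors as $\prod_{i=1}^k(\lambda-\lambda_i)^{e_{m,i}}$ with $\sum_i e_{m,i} = d_2^{m-1}$. By pigeonhole some $\lambda_{i_0}$ is a root of $P_m$ for infinitely many $m$, so $\al$ is preperiodic under $f_{\lambda_{i_0}}$ and $\be$ lies on its periodic cycle. To extract a contradiction, I would compare the coefficient of $\lambda^{b_m}$ in $P_m$ (for $b_m$ as in Lemma \ref{lem5:polynomialproperty}(c)) computed two ways: by Lemma \ref{lem5:polynomialproperty}(c) it equals $a_m(\al)$, with $a_m(x)$ of $x$-degree $p^{\ell_1+(m-1)\ell_2}s_2$ for $m\ge 2$ by the recursion $a_{n+1} = s_2 a_n^{p^{\ell_2}} + u_n$; meanwhile, the factorization gives the same coefficient (up to sign) as the elementary symmetric polynomial of degree $d_2^{m-1}-b_m$ in the multiset of the $\lambda_i$'s with multiplicities $e_{m,i}$, whose dependence on the $e_{m,i}$ is controlled through the $p$-adic expansions of the $e_{m,i}$ by Lucas' theorem applied to the binomial coefficients $\binom{e_{m,i}}{\cdot}$.

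The main obstacle is turning this comparison into a genuine contradiction. When $\al \notin \olfp$, I would use the height theory on the finitely generated field $L$ via Lemma \ref{lem:57}: the exponential growth of $\deg a_m$ together with the transcendence of $\al$ over $\olfp$ should force $h(a_m(\al))\to \infty$, while the symmetric-polynomial side remains in a family of bounded height built from the fixed $\lambda_i$'s. When $\al \in \olfp$ the height argument collapses and I would instead work with $P_m \in \olfp[\lambda]$ directly, either by a separability/multiplicity bound on its roots or by a dynamical argument exploiting the preperiodicity structure from the previous step; this case is the most delicate and mirrors the corresponding step in \cite[Theorem 2.4]{AG25}, adapted to the binomial $f$ of Setup \ref{setup_f} via Lemma \ref{lem5:polynomialproperty}.
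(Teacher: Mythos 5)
Your setup (assume $C(\al;\be)=\{\lambda_1,\dots,\lambda_r\}$ is finite and factor $P_m(\lambda)=f_\lambda^m(\al)-\be=\prod_i(\lambda-\lambda_i)^{e_{m,i}}$) agrees with the paper's, but from that point on you head in a direction that is both different from the paper's and, as you yourself concede, not carried through. The missing idea is the reduction to an unlikely-intersection statement in the torus: the paper observes that the consecutive pairs $\left(f_u^m(\al)-\be,\,f_u^{m+1}(\al)-\be\right)=\left(\prod_i(u-\lambda_i)^{e_{m,i}},\,\prod_i(u-\lambda_i)^{e_{m+1,i}}\right)$ all lie simultaneously on the geometrically irreducible plane curve $V\subset\mathbb{G}_m^2$ cut out by $(x+\be)^{d_2}+c_1(x+\be)^{d_1}=y+(\be-u)$ and in the finitely generated subgroup $\Gamma\subset\mathbb{G}_m^2(L(u))$ generated by $(u-\lambda_i,1)$ and $(1,u-\lambda_i)$. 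After checking $V$ is not a translate of a proper subtorus, the infinitude of $V(L(u))\cap\Gamma$ contradicts the Mordell--Lang theorem for $\mathbb{G}_m^2$ over function fields, invoked via \cite[Section 7.2]{AG25}; this is exactly where the standing assumption $s_2>1$ (so $d_2$ is not a $p$-power) is consumed. Your proposal never makes this reduction.

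Your substitute argument — compare the coefficient $a_m(\al)$ of $\lambda^{b_m}$ given by Lemma~\ref{lem5:polynomialproperty}(c) against the corresponding elementary symmetric polynomial in the root multiset, and separate the two by height growth — does not close as stated. The elementary symmetric polynomial of degree $d_2^{m-1}-b_m$ in a multiset of $d_2^{m-1}$ copies of the fixed $\lambda_i$'s is \emph{not} of bounded height: its height is only bounded by roughly $(d_2^{m-1}-b_m)\max_i h(\lambda_i)$, and the paper's own computation shows $d_2^{m-1}-b_m=p^{(m-2)\ell_2}(d_2-d_1+p^{\ell_1})$, which grows at the same exponential rate (in $m$) as $\deg a_m=p^{\ell_1+(m-1)\ell_2}s_2$. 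Their ratio is the $m$-independent constant $p^{\ell_1}d_2/(d_2-d_1+p^{\ell_1})$, so whether $h(a_m(\al))$ dominates depends on the relative sizes of $h(\al)$ and $\max_i h(\lambda_i)$, which you cannot control. The Lucas-theorem bookkeeping you invoke is likewise not developed into an actual inequality. And for the case $\al\in\olfp$ you explicitly defer to \cite[Theorem 2.4]{AG25} without an argument. So this is a genuine gap: the factorization setup is right, but the core step (the $\mathbb{G}_m^2$ Mordell--Lang reduction, or a working replacement) is absent.
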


\begin{proof}
Assume that the set
$$
C(\al;\be) := \{ \lambda\in \ol{L}: \text{ there exists } m \in \bN \text{ such that } f_\lambda^m(\al)=\be \}
$$
is finite, i.e. $C(\al;\be)=\{ \lambda_1, \ldots , \lambda_r\}$.
For each $m \in \bN$, $f_\lambda^m(\al)$ can be viewed as a polynomial in $\lambda$ so
$$
f_u^m(\al)-\be=\prod_{i=1}^r(u-\lambda_i)^{e_{m,i}}
$$
for some $e_{m,1}, \ldots , e_{m,r} \ge 0$. By the relation $f_u^{m+1}(\al)=f_u^m(\al)^{d_2}+c_1f_u^m(\al)^{d_1}+u$, we have 
$$
\left(\prod_{i=1}^r(u-\lambda_i)^{e_{m,i}} +\be\right)^{d_2} +c_1\left(\prod_{i=1}^r(u-\lambda_i)^{e_{m,i}} +\be\right)^{d_1}+u-\be =\prod_{i=1}^r(u-\lambda_i)^{e_{m+1,i}}.
$$
Let $\Gamma$ be the subgroup of $\mathbb{G}_m^2(L(u))$ spanned by the elements $(u-\lambda_i,1)$ and $(1,u-\lambda_i)$ for $i=1,2,\ldots, r$, and let $V$ be a curve inside $\mathbb{G}_m^2$ defined by the equation
\begin{equation} \label{eq5_curve}
(x+\be)^{d_2}+c_1(x+\be)^{d_1}=y+(\be-u).
\end{equation}
The curve $V$ is geometrically irreducible since it is linear in $y$. Then $V(L(u))\cap \Gamma$ is infinite since it contains all points of the form
$$
(f_u^m(\al)-\be,f_u^{m+1}(\al)-\be)=\left( \prod_{i=1}^r (u-\lambda_i)^{e_{m,i}},\prod_{i=1}^r (u-\lambda_i)^{e_{m+1,i}}\right).
$$
The equation \eqref{eq5_curve} defining $V$ shows that the curve $V$ is not the translate of an algebraic subgroup of $\mathbb{G}_m^2$. Indeed, the equation of any translate of a proper subtorus of $\mathbb{G}_m^2$ (defined over $\ol{L(u)}$) is of the form $x^ay^b=\zeta$ for some $\zeta\in \ol{L(u)}^*$ and some integers $a$ and $b$, not both equal to $0$. Since $\be \in L$ and $u$ is transcendental over $\ol{L}$, $\be^{d_2}+c_1 \be^{d_1} \ne \be - u$ so $V$ does not contain any translate of a proper subtorus of $\mathbb{G}_m^2$.

Now we can conclude the proof by following the argument in \cite[Section 7.2]{AG25}. By the assumption $s_2>1$, $d_2$ is not a power of $p$ so the same argument applies.
\end{proof}

\section*{Acknowledgments}		
Jungin Lee was supported by the National Research Foundation of Korea (NRF) grant funded by the Korea government (MSIT) (No. RS-2024-00334558 and No. RS-2025-02262988). 
GyeongHyeon Nam was supported by the National Research Foundation of Korea (NRF) grant funded by the Korea government (MSIT) (No. RS-2024-00334558) and Oscar Kivinen's Väisälä project grant of the Finnish Academy of Science and Letters.
We are very grateful to Dragos Ghioca for informing us of his recent work on the colliding orbits problem and for providing helpful comments and suggestions.
	

\end{document}